\documentclass[11pt]{article}

\usepackage[top=1in,bottom=1in,left=1.25in,right=1.25in]{geometry}

\usepackage[absolute,overlay]{textpos}

\usepackage{graphicx}
\usepackage{ragged2e}
\usepackage{tabto}

\usepackage{amsmath}
    \usepackage{amsthm}

\usepackage{float}
\usepackage{multirow}
\usepackage{tikz}
\usepackage{graphics}
\usepackage{color}
\usepackage{url}
\usepackage{afterpage}
\usepackage{multicol}
\usepackage[font=footnotesize]{caption}
\usepackage[font=footnotesize]{subcaption}
\usepackage{marginnote}
\usepackage{tcolorbox}

\usepackage{newtxtext}
\usepackage{newtxmath}

\usepackage[sf,bf,medium]{titlesec}
\usepackage{pgfplots}

\usepackage{bm}
\usepackage{booktabs}
\usepackage{siunitx}
\usepackage{isomath}

%\usepackage{epstopdf}
%\epstopdfsetup{update}

\usepackage[plainpages=false, colorlinks=true,
   citecolor=blue, filecolor=blue, linkcolor=blue,
   urlcolor=blue]{hyperref}
\usepackage{enumitem}

\bibliographystyle{abbrv}

\usepackage{multicol}
\setlength{\columnseprule}{0.4pt}
\setlength\columnsep{15pt}

\newcommand\vct[1]{\bm #1}

\newcommand{\lp}{\left(}
\newcommand{\rp}{\right)}

\usepackage[sort,compress]{cite}

\newcommand\bbR{\mathbb R}

\newcommand\bx{\boldsymbol{x}}

\newcommand\utrue{u_\text{true}}
\newcommand\chebfun{\texttt{Chebfun}}

\newtheorem{theorem}{\sffamily Theorem}
\newtheorem{remark}{\sffamily Remark}
\newtheorem{definition}{\sffamily Definition}
\newtheorem{problem}{\sffamily Problem}
\newtheorem{example}{\sffamily Example}
\newtheorem{lemma}{\sffamily Lemma}
\newtheorem{corollary}{\sffamily Corollary}[theorem]
\newtheorem{conjecture}{\sffamily Conjecture}

\newcommand{\cO}{\mathcal O}

\newcommand{\surflap}{\Delta_\Gamma}
\newcommand{\surfdiv}{\nabla_\Gamma \cdot}
\newcommand{\surfgrad}{\nabla_\Gamma}
\newcommand{\gradg}{\surfgrad}
\newcommand{\divg}{\surfdiv}
\newcommand{\LB}{\surflap}
\newcommand{\LBi}{\Delta_{\Gamma_i}}

\numberwithin{equation}{section}

\newcommand{\bs}{\boldsymbol}

\newcommand{\lsub}{L}
\newcommand{\ysub}{Y}
\newcommand{\GL}{G_{\lsub}}
\newcommand{\SL}{\mathcal S_{\lsub}}

\newcommand{\GY}{G_{\ysub}}

\newcommand{\Hs}{H^{s}(\Gamma)}

\newcommand{\Ht}{\bs L^{2}_t(\Gamma)}

\newcommand{\Ho}{H^{1}(\Gamma)}
\newcommand{\Hm}{H^{-1}(\Gamma)}
\newcommand{\Hoo}{H^{1}_{mz}(\Gamma)}
\newcommand{\Hmo}{H^{-1}_{mz}(\Gamma)}
\newcommand{\Hso}{H^{s}_{mz}(\Gamma)}
\newcommand{\Htg}{\mathcal{H}^2(\Gamma)}

\newcommand{\Iper}{[0,L]}
\newcommand{\Lr}{L^r(\Iper)}

\setlength{\emergencystretch}{20pt}
\newcommand{\binormal}{\bs b_i}
\newcommand{\binormalj}{\bs b_j}
\newcommand{\half}{\frac{1}{2}}

\newcommand{\shat}{\bs{\hat s}}
\newcommand{\thetahat}{\bs{\hat \theta}}
\newcommand{\tr}{\operatorname{tr}}

\renewcommand{\phi}{\varphi}
\newcommand{\interior}{\mathrm{o}}

\binoppenalty=\maxdimen
\relpenalty=\maxdimen

\usetikzlibrary{arrows}
\usetikzlibrary{calc}
\tikzset{
    partial ellipse/.style args={#1:#2:#3}{
        insert path={+ (#1:#3) arc (#1:#2:#3)}
    }
}

\begin{document}

\begin{titlepage}

  \raggedleft
  % {\sffamily \bfseries STATUS: Draft of Revised Version}
  {\sffamily \bfseries STATUS: arXiv pre-print}
  
  \hrulefill
  
  \raggedright
%  \vspace{\baselineskip}
%  \includegraphics[width=4in]{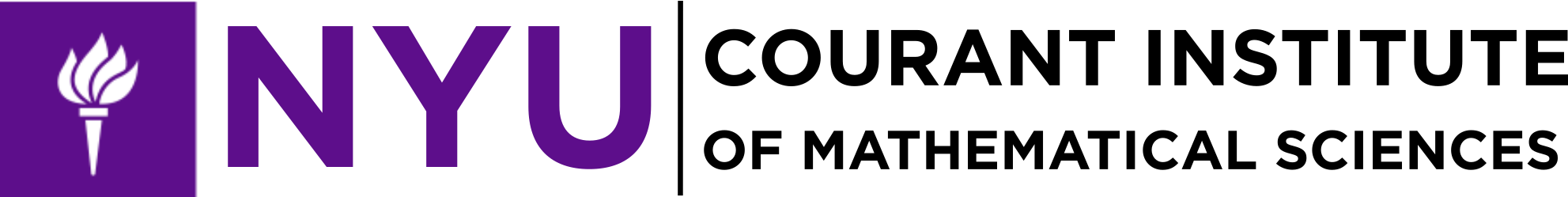}

 %%  \vspace{2\baselineskip}

 %%  {\LARGE \sffamily\bfseries On the numerical solution of
 %%    the Laplace-Beltrami\\
 %%    \vspace{.25\baselineskip}
 %%    problem on piecewise-smooth surfaces}
  
 %%  %  \vspace{\baselineskip}
 %%  %  {\Large \sffamily Surface integral equations}

 %%  \vspace{2\baselineskip}
 %%  % author 1
 %%  \vspace{\baselineskip}
 
 %%  \normalsize Tristan Goodwill\footnote{Research supported in part by...
 %% }\\
 %%  \small \emph{Courant Institute, NYU\\
 %%    New York, NY 10012}\\
 %%  \texttt{tg1644@nyu.edu}

 %%   \vspace{\baselineskip}
 %%  \normalsize Michael O'Neil\footnote{Research supported in part by
 %%    the Simons Foundation/SFARI (560651, AB).}\\
 %%  \small \emph{Courant Institute, NYU\\
 %%    New York, NY 10012}\\
 %%  \texttt{oneil@cims.nyu.edu}

 %%  \vspace{4\baselineskip}
 %%  \today

  \begin{textblock*}{\linewidth}(1.25in,2in) % {block width} (coords) 
    {\LARGE \sffamily \bfseries An interface formulation of
      the Laplace-Beltrami\\
      \vspace{.25\baselineskip}
      problem on piecewise smooth surfaces}
  \end{textblock*}

  \vspace{1.5in}
  Tristan Goodwill\footnote{Research supported in part by the Research Training Group in Modeling and Simulation funded by the National Science Foundation via grant
    RTG/DMS-1646339 and by the Office of Naval Research under
    awards \#N00014-18-1-2307 and \#N00014-21-1-2383.}\\ \textit{\small Courant Institute, NYU\\ New
    York, NY 10012}\\ \texttt{\small tg1644@nyu.edu}

  \vspace{\baselineskip}
  Michael O'Neil\footnote{Research supported in part by
  the Simons Foundation/SFARI (560651, AB) and by the Office of Naval Research under awards \#N00014-18-1-2307 and \#N00014-21-1-2383.}\\
  \textit{\small Courant Institute, NYU\\
    New York, NY 10012}\\
  \texttt{\small oneil@cims.nyu.edu}

  \begin{textblock*}{\linewidth}(1.25in,7in) % {block width} (coords) 
    \today
  \end{textblock*}

  \begin{textblock*}{\linewidth}(1.25in,8.25in) % {block width} (coords) 
    \includegraphics[width=4in]{courant-logo.png}
  \end{textblock*}

 %%  \normalsize
  
\end{titlepage}

\begin{abstract}
  The Laplace-Beltrami problem on closed surfaces embedded in three
  dimensions arises in many areas of physics, including molecular
  dynamics (surface diffusion), electromagnetics (harmonic vector
  fields), and fluid dynamics (vesicle deformation). In particular,
  the Hodge decomposition of vector fields tangent to a surface can be
  computed by solving a sequence of Laplace-Beltrami problems. Such
  decompositions are very important in magnetostatic calculations and
  in various plasma and fluid flow problems. In this work we develop
  $L^2$-invertibility theory for the Laplace-Beltrami
  operator on piecewise smooth surfaces, extending earlier weak
  formulations and integral equation approaches on smooth surfaces.
  Furthermore, we reformulate the weak form of the problem as
  an interface problem with continuity conditions across edges of
  adjacent piecewise smooth panels of the surface. 
  We then provide high-order numerical examples along surfaces of
  revolution to support our analysis, and discuss numerical extensions
  to general surfaces embedded in three dimensions.  \\
  
  \noindent {\sffamily\bfseries Keywords}: Laplace-Beltrami, harmonic
  vector field, Lipschitz, surface of rotation, Hodge decomposition.

\end{abstract}

%\small
\tableofcontents

\newpage
%\normalsize

\section{Introduction}\label{sec:intro}

The Laplace-Beltrami operator is a second-order elliptic differential
operator defined along Riemannian manifolds in arbitrary
dimensions. Practically, it can be thought of as the extension of the
Laplace operator to curved
surfaces~\cite{frankel2011,nedelec2001}. For the moment, let~$\Gamma$
be a smooth closed surface embedded in three dimensions, and
let~$\surfdiv$ and~$\surfgrad$ be the intrinsic surface divergence and
surface gradient operators along~$\Gamma$ (these are defined more
carefully later on in the manuscript). Then the Laplace-Beltrami
operator, also referred to as the \emph{surface Laplacian}, is given
as
\begin{equation}
  \surflap = \surfdiv \surfgrad.
\end{equation}
The Laplace-Beltrami operator is particularly useful for problems in
electromagnetics since it allows for the explicit construction of
tangential vector fields along multiply-connected surfaces in terms of
their gradient, divergence-free, and harmonic components. For example, any
smooth tangential vector field~$\vct{F}$ along~$\Gamma$ can be written as
\begin{equation}
  \label{eq:hdg}
  \vct{F} = \surfgrad \alpha  + \vct{n} \times
  \surfgrad \beta  + \vct{H},
\end{equation}
where~$\vct{n}$ is the unit normal to~$\Gamma$, the functions $\alpha,\beta$ are
smooth and scalar-valued along~$\Gamma$, and~$\vct{H}$ is a harmonic vector
field tangent to~$\Gamma$, i.e. $\surfdiv \vct{H} = 0$ and
$\surfdiv (\vct{n} \times \vct{H}) = 0$.
See~\cite{Epstein2009,Epstein2013,ONeil2018,nedelec2001} for more details
regarding such decompositions and a proof of their uniqueness. Conversely, if
the vector field~$\vct{F}$ above is known (but not its individual components),
then, for example, its solenoidal (i.e. divergence-free) component
involving~$\beta$ can be computed by taking~$\surfdiv \vct{n} \times$ of each
side of~\eqref{eq:hdg}, yielding
\begin{equation}
\surflap \beta = -\surfdiv \vct{n} \times \vct{F}.
\end{equation}
Solving the above PDE along~$\Gamma$ requires inverting the
Laplace-Beltrami operator. Applications of this problem are myriad in
electromagnetics, as mentioned, as well as plasma
physics~\cite{Malhotra2019}, vesicle deformation in biological
flows~\cite{rahimian2010petascale,veerapaneni2011fast}, surface
diffusion~\cite{bansch2005finite,escher1998surface}, and computational
geometry~\cite{kromer2018} and computer vision~\cite{angenent1999brain}.

Along smooth general surfaces, there have been several numerical
methods proposed to solve the Laplace-Beltrami problem: finite element
methods~\cite{Dziuk1988,bansch2005finite,Burman2017,Bonito2020,demlow2007adaptive}
(including eigenfunction computations~\cite{bonito2018}), the so-called
\emph{virtual element
method}~\cite{frittelli2018virtual,beirao2014hitchhiker}, differencing
methods~\cite{wang2018modified}, and integral equation
methods~\cite{ONeil2018,kropinski2014fast}. Of course, in specialized
geometries, such as axisymmetric ones, separation of variables can be
used to simplify the problem. In~\cite{Epstein2019,o2018integral},
after separation of variables, an integral equation approach was used
along the one-dimensional generating curve of the axisymmetric surface
resulting in high-order convergence for the inversion of the surface
Laplacian operator. Boundary-value problems for the Laplace-Beltrami
problem were addressed
in~\cite{kropinski2014fast,kropinski2016integral} via a parametrix
method and associated discretization via projection of the sphere.

However, while the related problem of electromagnetic scattering from
non-smooth surfaces has been studied in some
detail~\cite{Costabel,Buffa2002}, the Laplace-Beltrami problem on
piecewise-smooth surfaces (or more generally Lipschitz surfaces) has
not received as much attention. In order to extend various numerical
methods in computational electromagnetics, namely integral equation
methods based on generalized Debye source
representations~\cite{Epstein2013,Epstein2019,Chernokozhin2013,Epstein2009},
a thorough understanding of the Laplace-Beltrami problem on non-smooth
surfaces is required (as well as high-order accurate methods for
solving the problem numerically).  This work aims to offer a
mathematical discussion of the Laplace-Beltrami problem along
piecewise smooth surfaces in the~$L^2$ setting, which is compatible
with many modern numerical methods (i.e. PDE vs. integral equation
methods, Galerkin vs. Nystr\"om discretizations). In particular, our
main purpose for focusing on the $L^2$ theory of this problem is to
develop fast robust numerical solvers that can be incorporated into
computational electromagnetics codes. These solvers often rely on
iterative methods whose behavior is coupled with the spectrum of the
operator, which depends strongly on the type of discretization used. A
suitable~$L^2$ embedding of the problem~\cite{Bremer2012} is
frequently the most straightforward way to obtain an accurate
approximation to the spectrum of the continuous problem.

Previously, the regularity of solutions to the Laplace-Beltrami
problem in the general Lipschitz setting was discussed
in~\cite{Gesztesy2011}, and the regularity of solutions on polyhedral surfaces was discussed in \cite{Buffa2002}. Some special-case numerical methods on
polyhedral surfaces were
presented in~\cite{hildebrandt2011,wardetzky2007}. These methods are
almost exclusively developed in the finite element setting whereby the
use of polyhedral surfaces and linear finite elements reduce the
continuous problem to a discrete system that can be studied in detail.
Similar techniques have been used in the electromagnetics community
for analyzing Hodge decompositions on
polyhedra~\cite{buffa2003,Buffa2001}.

The main result of our manuscript is Theorem~\ref{thm:strongLB},
which proves a reformulation of the weak form of the Laplace-Beltrami
problem~$\surflap u = f$ as an \emph{interface} or \emph{transmission}
problem on the curved but piecewise smooth surface~$\Gamma$ with interface, or continuity, conditions along
the edges of the smooth panels. This reformulation is akin
to what are sometimes referred to as dielectric transmission problems
or Laplace interface problems in the plane. In fact, our reformulation
takes advantage of previous results for exactly this problem.

The outline of our approach and the paper is as follows: in
Section~\ref{sec:analytical} we recall some standard results
regarding the weak Laplace-Beltrami problem on smooth and Lipschitz
surfaces. In Section~\ref{sec:strong}, we extend those results to an interface formulation on piecewise smooth
surfaces. In Section~\ref{sec:cone}, we extend these results to a flat cone and discuss how they might be extended to other surfaces with conic singularities. As an example of a case where a simple numerical scheme exists, in Section~\ref{sec:ode} we reformulate interface form of the
Laplace-Beltrami problem as a sequence of decoupled ODEs along a
surface of revolution, and introduce an integral equation
reformulation of said ODEs. An associated high-order accurate
numerical solver is then discussed. Section~\ref{sec:numerical}
contains various numerical experiments validating our formulation and
demonstrating the accuracy of our solver. Finally, in
Section~\ref{sec:conclusions} we offer some final remarks and point
to outstanding problems related to the Laplace-Beltrami problem and
future avenues of research.

\section{The Laplace-Beltrami problem}
\label{sec:analytical}

In this section we lay out some well-known invertibility
results for the Laplace-Beltrami problem on smooth surfaces, and then
consolidate some existing results for Lipschitz domains.
From the context, the assumptions on~$\Gamma$ should
be clear (i.e. either smooth, general Lipschitz, or piecewise smooth).

\subsection{Smooth surfaces}

To begin with, let~$\Omega \subset \bbR^3$ denote an open bounded
domain whose boundary is given by~$\Gamma$. The boundary~$\Gamma$ can
either be simply or multiply connected, and for now is assumed to be
globally smooth. We shall start by recalling the definition of the
Laplace-Beltrami operator and the Laplace-Beltrami problem on a smooth
surface.

\begin{definition}[Laplace-Beltrami Operator]\label{def:LBO}
If $\Gamma$ is a smooth bounded surface, $\bx = \bx(\theta,\phi)$ is a
local parameterization of~$\Gamma$ with respect to some set of
variables~$\theta,\phi$, the functions $\bs x_\theta$ and $\bs x_\phi$ are the
partial derivatives of $\bs x$, and~$g$ is the associated metric
tensor,
\begin{equation}\label{eq:metric}
  g(\theta,\phi) = \begin{bmatrix}
    \bx_\theta \cdot \bx_\theta & \bx_\theta \cdot \bx_\phi \\
    \bx_\phi \cdot \bx_\theta & \bx_\phi \cdot \bx_\phi
    \end{bmatrix},
\end{equation}
then the Laplace-Beltrami operator~$\LB$ acting on a smooth
function~$f = f(\theta,\phi)$ is defined by the formula
\begin{equation}\label{eq:LB}
  \begin{aligned}
    \LB f  
    &=  \frac{1}{\sqrt{\det g}}
    \begin{bmatrix}
      \partial_\theta & \partial_\phi
    \end{bmatrix}  \sqrt{\det g} \, g^{-1}
    \begin{bmatrix}
      \partial_\theta\\
      \partial_\phi
    \end{bmatrix}  f \\
    &= \surfdiv \surfgrad f.
  \end{aligned}
\end{equation}
\end{definition}
In the above definition, we have introduced the notation~$\surfdiv$
and~$\surfgrad$ to denote the surface divergence and surface gradient,
respectively. The precise definitions in the case of interest, mainly
along piecewise smooth surfaces, are given below in
Section~\ref{sec:lipschitz}.

With the above definition of~$\surflap$, the Laplace-Beltrami problem
refers to solving the equation
\begin{equation} \label{eq:lapbel} 
  \surflap u = f \qquad \text{on } \Gamma
\end{equation}
for the unknown function~$u$.  One must define the domains of the
data~$f$ and solution~$u$ to 
this problem carefully in order to ensure that the problem is
well-posed, as the Laplace-Beltrami operator is neither injective nor
surjective. The operator has a well-known one-dimensional null space:
the space of constant functions along~$\Gamma$~\cite{nedelec2001}. Since the Laplace-Beltrami operator is self-adjoint (see below), the range of the operator is thus the space of functions with zero mean.
A standard  well-posed version of this
problem is summarized in the following theorem, which is proved
in~\cite{Warner2013}: 
\begin{theorem}
  Suppose that $\Gamma$ is a closed surface that is $C^k$ for some
  $k\geq 2$. Then, the Laplace-Beltrami problem~\eqref{eq:lapbel} has a
  unique mean-zero solution for every mean-zero right hand
  side that is in~$L^2(\Gamma)$. Furthermore, if the right hand side is in $H^{s}(\Gamma)$ for some
  $s\leq k-2$, then the solution~$u$ will be in~$H^{s+2}(\Gamma)$.
\end{theorem}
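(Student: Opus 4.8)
The plan is to regard $\surflap$ as a formally self-adjoint, second-order elliptic operator on the closed Riemannian manifold $\Gamma$ and to combine three ingredients: a Green's identity that pins down the kernel and the solvability obstruction, the Fredholm theory of elliptic operators on compact manifolds, and elliptic regularity for the derivative gain. First I would record that $\surflap$ is elliptic, since by~\eqref{eq:LB} its principal part in local coordinates is $g^{ij}\partial_i\partial_j$, whose symbol $-g^{ij}\xi_i\xi_j$ is definite because the metric $g$ is positive definite, and that $\surflap$ is formally self-adjoint with respect to the surface measure $\sqrt{\det g}\,d\theta\,d\phi$. Because $\Gamma$ is $C^k$, its first-fundamental-form coefficients $g^{ij}$ and $\sqrt{\det g}$ are $C^{k-1}$, so the coefficients of $\surflap$ are only $C^{k-2}$; this is precisely what will cap the regularity bootstrap at $\ell\le k-2$.

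Next I would identify the kernel and the range. Since $\Gamma$ is closed there are no boundary terms, so Green's identity gives
\begin{equation}
  \int_\Gamma u\,\surflap u \, dA = -\int_\Gamma |\surfgrad u|^2\, dA.
\end{equation}
If $\surflap u = 0$ then $\surfgrad u \equiv 0$, hence $u$ is constant by connectedness of $\Gamma$; conversely constants lie in the kernel, so $\ker\surflap$ is exactly the one-dimensional space of constants. Pairing $\surflap u$ against the constant $1$ shows $\int_\Gamma \surflap u\, dA = 0$, so the range is contained in the mean-zero functions. The fundamental elliptic estimate $\|u\|_{H^{s+2}}\le C(\|\surflap u\|_{H^s}+\|u\|_{H^s})$ together with Rellich compactness makes $\surflap\colon H^{s+2}(\Gamma)\to H^s(\Gamma)$ Fredholm, and self-adjointness forces index zero and identifies the cokernel with the kernel. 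Hence the range is exactly the $L^2$-orthogonal complement of the constants, i.e.\ the mean-zero functions.

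Existence and uniqueness then follow immediately: a continuous mean-zero $f$ lies in the range, so a solution $u$ exists, and subtracting its mean yields the unique mean-zero representative (two mean-zero solutions differ by a mean-zero element of the kernel, hence by zero). For the final claim I would invoke elliptic regularity: if $f\in C^\ell(\Gamma)$, embedding it into a suitable $H^s(\Gamma)$ and iterating the elliptic estimate lifts $u$ into higher Sobolev spaces, after which Sobolev embedding returns $u\in C^{\ell+2}(\Gamma)$, with the ceiling $\ell\le k-2$ forced by the $C^{k-2}$ coefficients.

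The hard part will be the regularity statement in the integer-order $C^\ell$ spaces. The kernel/range description and Fredholm solvability are robust and essentially elementary, but the sharp two-derivative gain $C^\ell\to C^{\ell+2}$ is delicate in the limited-smoothness setting: the cleanest route passes through Schauder estimates in the H\"older spaces $C^{\ell,\alpha}$, where the gain is exactly true, combined with an interpolation or embedding argument to move between the $C^\ell$ hypothesis and the H\"older estimates, all while tracking the loss from the merely $C^{k-2}$ coefficients. It is this bookkeeping, rather than any single difficult inequality, that constitutes the real work --- and the reason the statement is quoted from~\cite{Warner2013}.
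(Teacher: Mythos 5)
First, a point of comparison: the paper does not actually prove this theorem --- it is quoted with a pointer to \cite{Warner2013} --- so there is no internal argument to measure your route against. Your outline (ellipticity and formal self-adjointness of $\surflap$, Green's identity to identify the kernel with the constants and to show the range annihilates constants, Fredholm theory with index zero to conclude that the range is exactly the mean-zero functions, uniqueness by subtracting means) is precisely the standard argument behind that citation, and that part of the proposal is correct and essentially complete; the existence claim for merely continuous $f$ is fine provided ``solution'' is read in $H^2(\Gamma)$.

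The genuine gap is in the final regularity claim, and you have half-diagnosed it yourself. The gain $f\in C^{\ell}(\Gamma)\Rightarrow u\in C^{\ell+2}(\Gamma)$ is not reachable by the route you sketch, because there is no embedding of $C^{\ell}$ into $C^{\ell,\alpha}$ for any $\alpha>0$: the best available inclusion is $C^{\ell}\subset C^{\ell-1,1}\subset C^{\ell-1,\alpha}$, after which Schauder theory returns only $u\in C^{\ell+1,\alpha}$ --- one derivative short --- and the Sobolev route ($f\in C^\ell\subset H^\ell$, hence $u\in H^{\ell+2}\hookrightarrow C^{\ell}$ in two dimensions) does even worse. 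In fact the literal statement fails in the integer spaces: already for the flat Laplacian there exist continuous $f$ for which $\Delta u=f$ has no $C^{2}$ solution (the classical failure of the $C^{0}\to C^{2}$ estimate), and such an $f$ can be planted in a coordinate chart of any closed surface and corrected by a constant to make it mean-zero, since any two solutions differ by a smooth harmonic function. So no amount of bookkeeping closes the gap you flag at the end; the ``interpolation or embedding argument'' you appeal to does not exist. The correct statements are $f\in C^{\ell,\alpha}(\Gamma)\Rightarrow u\in C^{\ell+2,\alpha}(\Gamma)$ (Schauder), or $f\in H^{s}(\Gamma)\Rightarrow u\in H^{s+2}(\Gamma)$, which is the form actually established in the cited reference (whose Hodge-theory argument runs on the Sobolev scale and concludes $C^\infty$ regularity for $C^\infty$ data). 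You should either restate and prove the regularity claim in H\"older or Sobolev norms, or accept the weaker conclusion $u\in C^{\ell}(\Gamma)\cap C^{\ell+1,\alpha}(\Gamma_{\mathrm{loc}})$; as written, the last step of your proof cannot be completed.
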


We now move onto a practical discussion of the Laplace-Beltrami
problem along Lipschitz surfaces, a topic which has received less attention in the literature.

\subsection{Lipschitz surfaces}
\label{sec:lipschitz}

In this section, we shall recall the definitions required to clearly
state the Laplace-Beltrami problem on a general Lipschitz
surface~$\Gamma$, as well as summarize the associated invertibility
result presented in~\cite{Gesztesy2011}. Equivalent definitions of the relevant
functions spaces and operators are also discussed
in~\cite{Gesztesy2011} in greater detail. The first notion we outline
is that of a Lipschitz surface. A similar definition to that which we
give below is also contained in~\cite{Mitrea1995}.

\begin{definition}[Lipschitz surface]\label{def:lip_surf}
  Let $\Gamma$ be a bounded surface embedded in $\bbR^3$. The surface~$\Gamma$
  is said to be a Lipschitz surface if there exists a finite open
  covering $\{\cO_j\}_{j=1}^N$ and an associated set of rigid
  rotations~$\Sigma_j$ such that $\Sigma_k(\Gamma\cap \cO_j)$ is the
  graph of a Lipschitz function. That is to say, for each~$j$ there
  exists an open domain~$U_j\subset \bbR^2$ and a Lipschitz
  function~$\phi_j:U_j\to\bbR^3$ such that
  {$\Sigma_j(\Gamma\cap \cO_J)=\{(x,y,\phi_j(x,y))\;|\;(x,y)\in U_j
    \}$}.
\end{definition}
In the above definition, a Lipschitz surface is written in terms of
graphs of Lipschitz functions. It is also possible to describe a Lipschitz surface through local Lipschitz
parameterizations. This description will make it simpler to define the
relevant function spaces and differential operators. With this in mind, we define Sobolev spaces along~$\Gamma$ as
follows.

\begin{definition}[Sobolev spaces]\label{def:sobelev_lip}
  Let $\Gamma$ be a bounded Lipschitz surface with a finite open
  covering $\{\cO_j\}_{j=1}^N$. Also, let $\{\bs x_j\}_{j=1}^N$ be a
  collection of local Lipschitz parameterizations, such that each
  $\bs x_i$ maps an open neighborhood $U_i \subset \bbR^2$ onto
  $\cO_i$. Finally, let $\{\chi_j\}_{j=1}^N$ be a partition of unity
  on~$\Gamma$ such that for each $j$, $\chi_j$ is supported on
  $\cO_j$. We then have the following definitions:
\begin{enumerate}
\item For all $0\leq s \leq 1$ we define the Sobolev space
  along~$\Gamma$ of order $s$ as
\begin{equation*}
  \Hs = \{ f\in L^2(\Gamma)\; | \; (\chi_j f)\circ \bs x_j \in
  H^{s}(U_j) \text{ for all } j= 1,\ldots,N \},
\end{equation*}
with a norm given by
\begin{equation*}
  \left\Vert f \right\Vert_{\Hs} = \sum_{j=1}^N \left\Vert
  (\chi_j f)\circ \bs x_j \right\Vert_{H^{s}(U_j)}.
\end{equation*}

\item For all $-1\leq s<0$, we define the Sobolev space of order $s$,
  $\Hs$, as the dual space of~$H^{-s}(\Gamma)$.

\item For all $-1\leq s\leq 1$, we define $\Hso$ to be the subset of
  $\Hs$ with mean-zero. More explicitly, we set
  \begin{equation}
    \Hso=\left\{ f\in \Hs \;|\;\langle f,1\rangle_{s,\Gamma}=0 \right\},
  \end{equation}
  where~$\langle\cdot,\cdot\rangle_{s,\Gamma}$ is the duality pairing
  between~$\Hs$ and~$H^{-s}(\Gamma)$.

\item We define the space of tangent
  vector fields~$\Ht$ as the subset of three dimensional vector fields
  that are tangential to~$\Gamma$. If~$\bs n$ is the unit normal for~$\Gamma$, defined almost everywhere, then this can be written as
  \begin{equation}
     \Ht = \left\{ \bs v\in (L^2(\Gamma))^3 \; \left | \; \int_\Gamma (\bs v \cdot \bs n)^2 =0 \right. \right\}.
  \end{equation}
\end{enumerate}

\end{definition}

It should be noted that the above definitions are almost exactly those
that are used when $\Gamma$ is smooth. The only difference is that the
$\bs x_i$'s are assumed to be Lipschitz instead of smooth, and we have
restricted~$s$ to~$[-1,1]$. These spaces thus coincide with the usual
Sobolev spaces whenever~$\Gamma$ happens to be smooth. We also note
that for general Lipschitz surfaces, these Sobolev spaces only exist for~$|s|\leq 1$; a local Lipschitz parameterization will
only have fractional derivatives of order~$s$ for~$s\leq
1$. Lastly, we make the usual
identification~$H^{0}(\Gamma)=L^2(\Gamma)$ since~$\|\cdot
\|_{H^{0}(\Gamma)}$ is equivalent to~$\|\cdot\|_{L^2(\Gamma)}$
\cite{Gesztesy2011}.

With the above function spaces defined, we may now move onto
defining the associated surface differential operators in the usual
ways.
\begin{definition}
  \label{def:ders}
  Let $\{\bs x_j\}_{j=1}^N$, and $\{\chi_j\}_{j=1}^N$ be as in Definition~\ref{def:sobelev_lip} and let $g_j$ be the metric associated with $\bx_j$. Interpreting all partial
  derivatives in the distributional sense~\cite{folland1996}, we make the
  following definitions:
\begin{enumerate}
\item For~$0\leq s\leq 1$, the surface gradient $\gradg$ of a function
  $f\in \Hs$ is given by the formula
\begin{equation}
  \gradg f=\sum_j \chi_j\begin{bmatrix}\bx_{j,\theta}& \bx_{j,\phi}\end{bmatrix}g_j^{-1}\begin{bmatrix}
    \partial_\theta\\
    \partial_\phi
    \end{bmatrix}f ,\label{eq:gradg}
\end{equation}
where $\bs x_{j,\theta}$ and $\bs x_{j,\phi}$ are the weak partial derivatives of $\bx_j$ and $g_j$ is the associated metric tensor (see
Definition~\ref{def:LBO}).  It is clear
from this formula that $\nabla_\Gamma:\Ho\to\Ht$.
 
\item The surface divergence~$\divg:\Ht \to H^{-1}(\Gamma)$ is defined as the negative adjoint of the surface gradient map from $\Ho$ to $\Ht$. This may be expressed through the duality product of~$\Hm$ and~$\Ho$:
  \begin{equation}
      ( \divg \bs v, f)_{\Hm,\Ho} := - \int_\Gamma \bs v \cdot  \gradg f,\quad\forall f\in \Ho.
  \end{equation}

\item The Laplace-Beltrami operator~$\LB:\Ho\to \Hm$ is defined as
  the composition~$\LB := \divg \gradg$. The formula for applying
  $\LB$ to a function $f\in\Ho$ is the same as for the smooth
  case~\eqref{eq:LB}, except that the derivatives must be interpreted
  in a weak sense.
\end{enumerate}
\end{definition}

% \todo{The definition of $\gradg f$ is independent of the choice of $\bx_j$ and $\chi_j$. This may be seen through the change of variables formula for surface metrics (see \cite{} for the smooth case).}

From a practical point of view, the general definition of the surface
divergence given above is difficult to implement numerically. This
obstacle is most easily overcome by assuming a particular (piecewise)
parameterization of~$f$ along~$\Gamma$ and using the following
equivalent definition (see \cite{nedelec2001}): if~$\bs v\in\bs L^{2}_t(\Gamma)$ is written as~$\left.\bs v\right|_{\Gamma\cap \cO_j} =
v_\theta \bs x_{j,\theta}+v_\phi\bs x_{j,\phi}$, then
\begin{equation} \label{eq:divg}
  \left.\divg \bs v\right|_{\Gamma\cap \cO_j}=\frac{1}{\sqrt{\det g}}
  \begin{bmatrix}
    \partial_\theta& \partial_\phi
  \end{bmatrix}\sqrt{\det g}\begin{bmatrix}
    v_\theta\\
    v_\phi   
  \end{bmatrix}.
\end{equation}
 While the above formula still uses weak
derivatives, we shall be able to reinterpret it in a stronger
sense once we make further assumptions on~$\Gamma$.

We also note that the Laplace-Beltrami operator can only be defined on~$\Hs$
for~$s=1$. For any other value of~$s$ the domain of either the surface gradient
or the surface divergence would not be well-defined on a general Lipschitz
surface. We will therefore only consider right hand sides that are contained
in~$\Hm$ when attempting to solve the Laplace-Beltrami equation~$\LB u=f$ on
such surfaces. As in the smooth case, it will be necessary to assume that $f$ is
mean-zero, and we will also look for a mean-zero solution. The Laplace-Beltrami
problem along a Lipschitz surface therefore becomes:
 \begin{problem}[Laplace-Beltrami problem]
   Let $f\in\Hmo$, the set of functions in~$H^{-1}(\Gamma)$ which are also
   mean zero. The Laplace-Beltrami problem is to find a $u\in\Hoo$ such that
   $\LB u=f$.
 \end{problem}
 The
 following theorem shows that the problem is
 well-posed. A proof can be found in~\cite{Gesztesy2011}.
\begin{theorem}
  If $\Gamma$ is a bounded Lipschitz surface without boundary, then the
  null-space of the Laplace-Beltrami operator is the set of constant functions
  and the range is~$\Hmo$. The Laplace-Beltrami problem on $\Gamma$ is thus
  well-posed. \label{thm:LB-dom-rang}
\end{theorem}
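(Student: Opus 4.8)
The plan is to pass to the weak (variational) formulation of the problem and apply the Lax--Milgram theorem on the mean-zero subspace $\Hoo$. Because $\divg$ is defined as the negative adjoint of $\gradg$, the equation $\LB u = f$ is equivalent to the statement that
\[
  a(u,v) := \langle \gradg u, \gradg v\rangle_{L^2(\Gamma)} = -\langle f, v\rangle
  \qquad\text{for all } v\in\Ho,
\]
where $\langle\cdot,\cdot\rangle$ denotes the $H^{-1}$--$H^{1}$ duality pairing. Boundedness of $a$ on $\Ho\times\Ho$ is immediate from the boundedness of $\gradg$ as a map from $\Ho$ into the $L^2$ tangent fields noted after \eqref{eq:gradg}; the entire difficulty lies in proving coercivity once the constants have been removed.

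First I would pin down the null space. If $\gradg u = 0$, then the local formula \eqref{eq:gradg} forces the weak gradient of $u\circ\bs x_i$ to vanish on each chart $U_i$, so $u$ is constant on each piece of the covering; since $\Gamma$ is the boundary of a bounded domain and hence connected, these local values agree and $u$ is globally constant. Conversely every constant is annihilated by $\gradg$, so the null space of $\LB$ is exactly the one-dimensional space of constant functions, as claimed.

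The main obstacle is the coercivity of $a$ on $\Hoo$, which I would reduce to a Poincar\'e--Wirtinger inequality
\[
  \|u\|_{L^2(\Gamma)} \leq C\,\|\gradg u\|_{L^2(\Gamma)}
  \qquad\text{for all } u\in\Hoo.
\]
This I would prove by contradiction, using the compactness of the embedding $\Ho\hookrightarrow L^2(\Gamma)$, which holds on a compact Lipschitz surface by applying the Rellich--Kondrachov theorem chart by chart through the partition of unity. If no such $C$ existed there would be a sequence $u_n\in\Hoo$ with $\|u_n\|_{L^2(\Gamma)}=1$ and $\|\gradg u_n\|_{L^2(\Gamma)}\to 0$; these are bounded in $\Ho$, so a subsequence converges in $L^2(\Gamma)$ to some $u$ with $\|u\|_{L^2(\Gamma)}=1$. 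Since $\gradg u_n\to 0$, the limit satisfies $\gradg u=0$ and is therefore constant by the previous step, while the mean-zero constraint survives the $L^2$ limit and forces $u\equiv 0$, contradicting $\|u\|_{L^2(\Gamma)}=1$. With the inequality established, $a(u,u)=\|\gradg u\|_{L^2(\Gamma)}^2\geq c\,\|u\|_{\Ho}^2$ on $\Hoo$, so $a$ is coercive there.

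Finally I would assemble the conclusion. The range always lies in $\Hmo$: testing against the constant function $v=1$ gives $\langle \LB u, 1\rangle = -\langle \gradg u, \gradg 1\rangle_{L^2(\Gamma)} = 0$, so every image is mean-zero. Conversely, for $f\in\Hmo$ the functional $v\mapsto -\langle f,v\rangle$ is bounded on $\Hoo$, so Lax--Milgram yields a unique $u\in\Hoo$ with $a(u,v)=-\langle f,v\rangle$ for all $v\in\Hoo$. Since $\Ho$ splits as $\Hoo$ plus the constants, and both sides of this identity vanish on constants (the left because $\gradg 1 = 0$, the right because $f$ is mean-zero), the identity holds for all $v\in\Ho$, i.e. $\LB u=f$ in $\Hm$. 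This gives a unique mean-zero solution for each mean-zero datum, together with the a priori bound $\|u\|_{\Ho}\leq C\|f\|_{\Hm}$ coming from coercivity, which is exactly the well-posedness asserted in the theorem.
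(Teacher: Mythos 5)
Your argument is essentially correct, but it is worth noting that the paper does not prove Theorem~\ref{thm:LB-dom-rang} at all: it simply cites~\cite{Gesztesy2011} for the result. Your self-contained route --- pass to the bilinear form $a(u,v)=\langle\gradg u,\gradg v\rangle_{L^2(\Gamma)}$, identify the null space as the constants, establish a Poincar\'e--Wirtinger inequality on $\Hoo$ by contradiction via the compact embedding $\Ho\hookrightarrow L^2(\Gamma)$, and then invoke Lax--Milgram (or, since $a$ is symmetric, Riesz representation) on the mean-zero subspace --- is the standard variational proof and is in the spirit of what the cited reference does. The extension step at the end, where you pass from test functions in $\Hoo$ to all of $\Ho$ by checking that both sides of the identity vanish on constants, is the right way to conclude that $\LB u=f$ holds in $\Hm$ and not merely in the dual of $\Hoo$; this detail is often skipped and you handle it correctly.

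Two small caveats. First, your justification that $\Gamma$ is connected (``the boundary of a bounded domain and hence connected'') is false in general: a spherical shell $\{1<|x|<2\}$ is a bounded domain whose boundary has two components, and on a disconnected surface the null space consists of all locally constant functions, not just the globally constant ones. Connectedness of $\Gamma$ is an implicit hypothesis of the theorem as stated, not a consequence of boundedness, so you should simply assume it. Second, the coercivity step quietly uses the equivalence of the chart-based $\Hs$ norm with $\|u\|_{L^2(\Gamma)}+\|\gradg u\|_{L^2(\Gamma)}$, together with the fact that the metric tensor $g$ of a Lipschitz parameterization is bounded above and below almost everywhere; this is standard but deserves a sentence, since it is exactly where the Lipschitz hypothesis enters.
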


In summary, we now have that the distributional form of our equation is
well-posed on a general Lipschitz surface. If we also assume
that~$f\in L^2_{mz}(\Gamma):=\{h\in L^2(\Gamma)|\int_\Gamma h=0\}$, then we may
reformulate this problem more explicitly in the following weak form.
\begin{equation}\label{eq:weak_form}
  \begin{aligned}
    \text{Find } &u\in\Hoo \\
    \text{such that } &\int_\Gamma \nabla_\Gamma u\cdot\nabla_\Gamma v=-\int_\Gamma fv
    \quad  \text{for all } v\in \Hoo.
  \end{aligned}
\end{equation}
In the next section, we will convert the expression above into one with
interface conditions along edges in piecewise smooth geometries.

\section{Interface conditions}
\label{sec:strong}
We shall now further restrict ourselves to the case that the
surface~$\Gamma$ is a finite union of smooth and closed
faces~$\Gamma_i$ and that~$f\in L^2_{mz}(\Gamma)$. We will use this added
smoothness of the geometry to turn the weak
equation~\eqref{eq:weak_form} into a strong interface form on each face
augmented with matching conditions along the edges (but away from any
corners). We start by defining the class of surfaces that we consider.

\begin{definition}[Piecewise smooth Lipschitz surface]\label{def:piecewise_smth}
  Let $\{\Gamma_i\}_{i=1}^N$ be a collection of smooth surfaces such that for each~$\Gamma_i$, there exists a closed triangle~$T_i\subset \mathbb{R}^2$ and a parameterization~$\bs x_i:T_i\to \Gamma_i$ that is a smooth diffeomorphism. We suppose that each~$\Gamma_i$ is closed, so includes its boundary and that the interiors of the faces, denoted~$\left\{\Gamma_i^\interior\right\}$, are pairwise disjoint. If the union~$\Gamma=\cup_{i=1}^N\Gamma_i$ is a Lipschitz surface, then the surface~$\Gamma$
  is said to be a piecewise smooth Lipschitz surface with faces~$\{\Gamma_i\}_{i=1}^N$.
  \end{definition}

\begin{remark}
The above definition may seem restrictive because it requires that every face is a curved triangle. This is not a true restriction as other piecewise smooth surfaces, such as cubes and hemispheres, may be reduced to this case by artificially splitting their faces.

By making this assumption, we can simplify our description of the surface. This in turn will simplify our proofs later on, particularly in Section~\ref{sec:unif_elip}. Similarly, we may ensure that each vertex of~$\Gamma$ is contained in at least three faces by splitting faces as necessary. This assumption will also simplify our proof of Theorem~\ref{thm:acceptable_param}.

It is important to note that not every surface that is smooth almost everywhere can be made to satisfy Definition~\ref{def:piecewise_smth}. For example, cones and other surfaces with conic singularities have unbounded mean curvature, and so are not-piecewise smooth. We will treat the special case of a flat cone in
Section~\ref{sec:cone}.
\end{remark}

We now develop some notation for piecewise smooth Lipschitz surfaces. If it exists, the edge which is the intersection of the faces~$\Gamma_i$ and~$\Gamma_j$
will be denoted by $e_{ij}$. We let $\bs n_i$ be the outward normal
along~$\Gamma_i$, $\bs \tau_i$ be the 
tangent vector along the boundary~$\partial\Gamma_i$ to
face~$\Gamma_i$, and~$\binormal=\bs \tau_i\times \bs n_i$. We will refer
to~$\binormal$ as the binormal vector, and assume that the
orientation of~$\bs \tau_i$ was
chosen so that~$\binormal$ points away
from~$\Gamma_i$. These definitions are demonstrated in Figure~\ref{fig:edge_picture}. We refer to the set of vertices of~$\Gamma$, i.e. the points where some~$\partial\Gamma_i$ is not smooth, as~$\mathcal{C}$. 
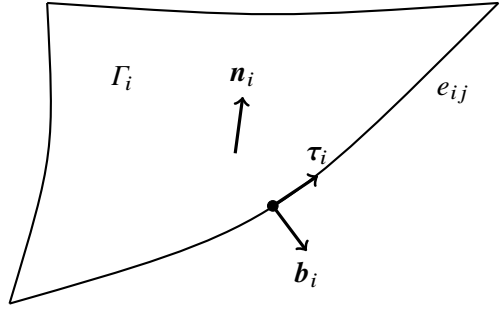
\begin{figure}
    \centering
    \begin{tikzpicture}
        \coordinate (A) at (-3,3);
        \coordinate (B) at (-3.5,-1);
        \coordinate (C) at (3,3);

        \coordinate (N) at (0,0);
        \coordinate (N') at ($ (N)+(0.,0.3) $);
        \coordinate (T) at (-0.5,1);
        
        \coordinate (A') at (3.5,-1);

        \draw[black,thick] (A) .. controls (-2.9,1) .. (B);
        \draw[black,thick] (B) .. controls (N) ..(C);
        \draw[black,thick] (C) .. controls (0,2.8)..(A);

        % \draw[black,dashed,thick] (A') .. controls (0,-2.7) ..(B);
        % \draw[black,dashed,thick] (C) .. controls (3.4,0) ..(A');
        
        \filldraw[black] (N') circle (2pt);
        \draw[->,very thick] (N') -- ($ (N')+(0.45,-0.6) $) node[below]{$\bs b_i$};
        \draw[->,very thick] (N') -- ($ (N')+(0.6,0.4) $) node[above]{$\bs \tau_i$};

        \draw[->,very thick] (T) -- ($ (T)+(0.1,0.75) $) node[above]{$\bs n_i$};
        \node(label) at (-2,2) {$\Gamma_i$};
        % \node(label) at (2.5,-0.8) {$\Gamma_j$};
        \node(label) at (2.4,1.8) {$e_{i{j}}$};
    \end{tikzpicture}
    \caption{This figure shows some of the definitions associated with our piecewise smooth geometry.}
    \label{fig:edge_picture}
\end{figure}

Using this notation, we compute the interface form of the equation on each
face using integration by parts. For now, we will assume that~$u$ happens to be continuous and in~$C^\infty(\Gamma_i)$ for each~$i$, allowing straightforward integration by parts. After computing
the strong interface form, we will go back and justify that the interface form has a solution that is also a weak solution. For this calculation, we also only consider test functions~$v$ in~\eqref{eq:weak_form} that are continuous. Under these new assumptions, the weak form~\eqref{eq:weak_form}
becomes
\begin{equation}
    \sum_i \left(\int_{\Gamma_i} v\Delta_\Gamma u
    +\int_{\partial\Gamma_i}v \binormal\cdot \nabla_\Gamma u
    \right)=\sum_i\int_{\Gamma_i} fv, \qquad \text{for all } v\in C(\Gamma)\cap \Ho. \label{eq:IBP}
\end{equation}
Through the usual variational arguments~\cite{John1982}, this equation
tells us that~$\LBi u|_{\Gamma_i^{\mathrm{o}}}=f|_{\Gamma_i^{\mathrm{o}}}$ almost everywhere for
each~$i$. By considering choices of~$v$ that are supported near~$e_{ij}$
for each~$i$ and~$j$, equation
\eqref{eq:IBP} also tell us that
\begin{equation}
 \int_{e_{ij}} v \, \left( \binormal\cdot \nabla_\Gamma u|_{\Gamma_i}+\binormalj\cdot
 \nabla_\Gamma u|_{\Gamma_j} \right) = 0 \qquad \text{for all } v\in C(\Gamma)\cap \Ho.
\end{equation}
It is not hard to see that this implies that the one-sided binormal derivatives
of~$u$ agrees from both sides of an edge. We thus have that if the
solution $u$ happens to be smooth enough that we can do the above integration by parts, then~$u$ solves the following problem:
\begin{problem}[Interface form of the Laplace-Beltrami problem]
  \label{def:interface_form}
 Let $\Gamma$ be a closed Lipschitz surface composed of smooth faces $\left\{\Gamma_i\right\}$ and
 suppose that $f\in L_{mz}^2(\Gamma)$. The interface form of the
 Laplace-Beltrami problem is to find a $u\in H^{1}_{mz}(\Gamma)$ such that~$\frac{\partial u|_{\Gamma_i}}{\partial \binormal}$ exists in the trace sense defined below and such that
 \begin{equation}
  \begin{aligned}
    \LB u|_{\Gamma_i^{\mathrm{o}}} &=f|_{\Gamma_i^{\mathrm{o}}},  &\qquad &\text{a.e. on } \Gamma_i,\\
    \frac{\partial u|_{\Gamma_i}}{\partial \binormal} &=-\frac{\partial
      u|_{\Gamma_j}}{\partial \binormalj} & &\text{on } e_{ij},\\
    u|_{\Gamma_i}&=u|_{\Gamma_j} & &\text{on } e_{ij} ,
  \end{aligned}\label{eq:interface_form}
\end{equation}
where the edge conditions are interpreted in the trace sense.
\end{problem}
\noindent
We note here that the interface form does not involve any corner conditions
on~$u$ beyond the requirement that the traces exist. This comes from the fact
that integration by parts does not introduce any corner conditions on piecewise
smooth domains.
   
Having identified the interface form of the Laplace-Beltrami equation, we will
go back and prove that it is well posed and equivalent to the weak form
whenever~$f$ is in~$L^2_{mz}(\Gamma)$. We do this in four stages. First, in
Section~\ref{sec:planar_results}, we recall standard results for elliptic
interface problems in the plane. Then, in Section~\ref{sec:unif_elip} we connect
the Laplace-Beltrami problem on a surface patch to an interface problem in the
plane and we show that there exists a covering by parameterizations such that
the corresponding elliptic interface problem is uniformly elliptic.
Subsequently, in Section~\ref{sec:interface_form_hols} we check that the
remaining requirements of the planar regularity theorems
(Theorems~\ref{thm:Nicaise} and~\ref{thm:Nicaise2}) are satisfied so that we can
therefore rigorously carry out the above integration by parts argument on a
single surface patch (Theorem~\ref{thm:single_patch_result}). Finally, in
Section~\ref{sec:closed_surface} we use these local results to prove that the
weak solution of the Laplace-Beltrami problem satisfies the interface form on
the whole surface (Theorem~\ref{thm:strongLB}).

\subsection{Interface problems in the plane}  \label{sec:planar_results}
In order to prove the regularity of weak solutions of the Laplace-Beltrami problem, we shall leverage existing results for elliptic interface problems in the plane. To start with, with define such problems.
\begin{definition}[Elliptic interface problem in the plane]\label{def:ellip_int}
Let the following hold:
\begin{enumerate}
    \item The set $\Omega = \cup_{i=1}^N T_i\subset\bbR^2$ is a finite union of triangles such that every pair of triangles $T_i$ and $T_j$ either overlap along an entire edge, overlap only at a vertex shared by both triangles, or don't overlap.
    \item The vector field $\bs \nu_i$ is the outward unit normal to the boundary of~$T_i$ for~$i=1,\ldots,N$, defined almost everywhere on that boundary.
    \item The differential operators $L_i$ are elliptic on~$T_i$ for~$i=1,\ldots,N$ and can be written as $L_i = \operatorname{trace}[g^{-1}_i \nabla^2 ] + \bs h_i \cdot \nabla $, where~$g_i^{-1}$ is a matrix-valued function and~$\bs h_i$ is a vector field defined on $T_i$.
    \item The function $f$ is in $L^2(\Omega)$.
    \item The functions $\alpha_i$ are positive functions on $\partial T_i$
    \item The functions $c_i$ are in~$L^2(\partial T_i \setminus \cup_{j\neq i} \partial T_j)$ for~$i=1,\ldots,N$.
\end{enumerate}
The elliptic interface problem is to find a function~$u$ such that
\begin{equation}
\begin{aligned}
    L_i u|_{T_i}&= \left.f\right|_{T_i} &\qquad &\text{a.e. on } T_{i},\\
    u|_{T_i} &= u|_{T_j} &\qquad &\text{a.e. on } T_{i} \cap T_{j},\\
   \alpha_i (g^{-1}_i  \bs \nu_{i})\cdot \nabla u|_{T_i} &= - \alpha_j(g^{-1}_j \bs \nu_j)\cdot \nabla u|_{T_j} &\qquad &\text{a.e. on } T_{i} \cap T_{j},\\
   u|_{T_i} &= c_i&\qquad &\text{a.e. on } \partial T_i\cap \partial\Omega,
\end{aligned}\label{eq:elliptic_in_plane}
\end{equation}
for all~$i$ and~$j$ such the corresponding sets are non-empty, that~$u$ is in $H^1(T_i)$ for each $i$, and that the trace of $\nabla u$ can be taken in then sense discussed below Corollary~\ref{cor:cornerstodomain}.
\end{definition}
The regularity of solutions to an elliptic interface problem will depend
strongly on their behaviour near vertices. To analyze this, we look at
homogeneous solutions of~\eqref{eq:elliptic_in_plane} in the region around a
single interior or exterior vertex. These corner solutions are discussed in detail
in~\cite{Nicaise1994}. Here we summarize by noting that they are of the form
$r_s^{\lambda_s} \tau_s(\theta_s)$, where $(r_s,\theta_s)$ are polar coordinated
centered at the vertex $s$. The class of allowable $\lambda_s$ and $\tau_s$ are
defined below. For some higher order problems, there also exist corner solutions
which behave
as~$r_s^{\lambda_s} \lp\ln r_s\rp^q \tau_s(\theta_s)$, where $q$ is some positive
integer, but those do not occur in our second-order problem (see
Lemma~\ref{lem:corners} below), so we neglect them in this discussion.
\begin{definition}
Let $s\in \bbR^2$ be a vertex in an elliptic interface problem and let $(r_s,\theta_s)$ be the polar coordinates centered at $s$. The complex number $\lambda_s$ and continuous, $2\pi$-periodic, and piecewise smooth function $\tau_s$ form an expansion pair for the vertex $s$ in the elliptic interface problem if they satisfy the differential condition
\begin{equation}
   L_i r_s^{\lambda_{s}}
        \tau_s(\theta_s)=o(r^{\lambda_s-2}) \qquad \text{as } r_s\to 0
\end{equation}
 for all $i$ such that $T_i$ touches $s$, the matching condition
\begin{equation}
    \alpha_i(g^{-1}_i \bs \nu_{i})\cdot \nabla \lp r_s^{\lambda_{s}}\tau_s\rp|_{T_i} = - \alpha_j(g^{-1}_j \bs \nu_j)\cdot \nabla \lp r_s^{\lambda_{s}}\tau_s(\theta_s)\rp|_{T_j} +o(r^{\lambda_s-1}) \qquad \text{as } r_s\to 0
\end{equation}
for all $i$ and $j$ such that $T_i$ and $T_j$ share an edge touching $s$, and the boundary condition~$\tau_s=0$ on~$\partial\Omega$. In the following, we refer to $\lambda_s$ as an expansion power for the vertex $s$ and to $\tau_s$ as the angular function associated with $\lambda_s$.
\end{definition}

For the above expansion powers to be discrete, we must impose some more assumptions on the operators in the interface problem. We now
define an alternative, more regular, interface problem.
\begin{definition}[Regular elliptic interface problem]
  An elliptic interface problem is regular if the matrix valued functions
  $g_i^{-1}$ are uniformly positive definite on their domains and the
  coefficient functions $g_i^{-1}$,~$h_i$,~and~$\alpha_i$ are smooth and
  bounded.
\end{definition}

The following two theorems give the existence and regularity of the solution of
an elliptic interface problem. These are Theorems 4.2 in~\cite{Nicaise1994} and
8.6 in~\cite{Nicaise1994a}, restricted to the case of second-order equations
with Dirichlet boundary conditions on~$\partial\Omega$.

\begin{theorem}[Existence, Nicaise and S\"andig Theorem 4.2]\label{thm:Nicaise}
Let~$\mathcal{V}$ be the set of corners of the domain~$\Omega$ in Definition~\ref{def:ellip_int}, i.e. points that are a vertex of some~$T_i$. Also let~$\{\lambda_{n,s}\}_n$ be the expansion powers of the vertex~$s\in \mathcal{V}$ for an elliptic interface problem. If all of the powers~$\lambda_{n,s}$ are real and the right hand side~$f$ is in~$L^2(\Omega)$, then there exists a unique function $v\in
H^1(T^{\mathrm{o}})$ that solves that elliptic interface problem in a sense described in Section~3 of~\cite{Nicaise1994} with~$c_i=0$ for each~$i$. 
\end{theorem}

\begin{theorem}[Regularity, Nicaise and S\"andig Theorem 8.6]\label{thm:Nicaise2}
  Suppose that the assumptions of Theorem~\ref{thm:Nicaise} hold and $v$ is the
  identified solution. If $f$ is also in~$H^{k}(T_i^{\mathrm{o}})$ for every
  $i$, where $k=0,1,2$, and the neighborhood~$V$ contains exactly one vertex
  of~$\Omega$, then for any point in~$V$ we may write the solution~$v$ as
  \begin{equation}
      v=v_0+\sum_{\substack{\lambda_{n,s}\leq
         (k+1)\\\lambda_{n,s}\neq 0}} a_{n} \, r_s^{\lambda_{n,s}} \,
        \tau_{n,s}(\theta_s),\label{eq:corner_exp}
  \end{equation}
  where the function~$v_0$ is in~$H^{k+2}(T_i^{\mathrm{o}}\cap V)$ for
  each~$i$ such that~$T_i$ touches~$s$ and satisfies the interface conditions in~\eqref{eq:elliptic_in_plane} in the usual trace sense, where~$a_n\in\mathbb{C}$ for each~$n$, where $(r_s,\theta_s)$ are polar coordinates around the corner $s$, and where $\tau_{n,s}$ is the expansion function associated with $\lambda_{n,s}$.
\end{theorem}
\begin{remark}
  Nicaise and S\"andig Theorem 8.6 also includes the case where $k>2$, but when
  looking for higher regularity, equation~\eqref{eq:ucorner_hom} must also
  include weak interface singularities. We thus restricted
  Theorem~\ref{thm:Nicaise2} to the case of moderate regularity ($k\leq 2$) in
  order to simplify our discussion.
\end{remark}
The above theorem can be applied to give a global representation of the solution $v$. This extension will need the partition of unity given in the following lemma to ensure that each part of the decomposition satisfies the interface conditions.
\begin{lemma}\label{lem:part_of_unity}
  If~$\{V_s\}_{s\in\mathcal{V}}$ is a collection of neighborhoods covering~$\Omega$ such that each $V_s$ contains only the vertex $s$, then there exists a partition of unity~$\{\zeta_s\}$ such that each~$\zeta_s$ is supported in $V_s$, is smooth in each region~$T_i$, and satisfies the interface conditions in~\eqref{eq:elliptic_in_plane}.
\end{lemma}
\begin{proof}

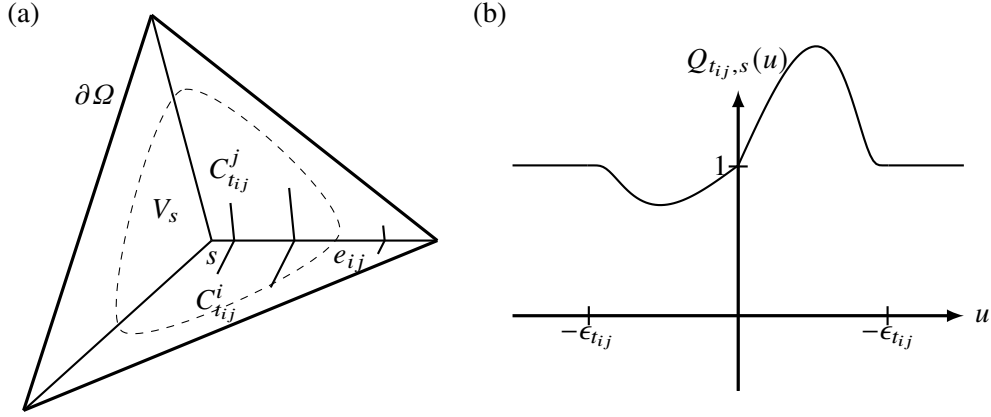
\begin{figure}[t]
    \centering
    \begin{tikzpicture}
\coordinate (A) at (3,0);
\coordinate (A') at (1.7,0.1);
\coordinate (C) at (-0.8,3);
\coordinate (C') at (-0.4,2);
\coordinate (D) at (-2.5,-2.26);
\coordinate (D') at (-1.2,-1.2);

\coordinate (O) at (0,0);
%\draw[->] (A)--(B) ;
    
\draw[black, thick] (O) node [below] {$s$} -- (A);
% \draw[black, thick] (O) -- (B);
% \draw[black, thick] (O) -- (E);
\draw[black, thick] (O) -- (C);
\draw[black, thick] (O) -- (D);

\draw[black, very thick] (C)-- (A);
% \draw[black, thick] (C) -- (B);
\draw[black, very thick] (D) -- (A);
\draw[black, very thick] (D) -- (C) node[pos=0.8,left] {$\partial\Omega$};

\draw[dashed] plot [smooth cycle] coordinates {(A') (C') (D')};
\node(label) at (-0.6,0.4) {$V_s$};
% \draw[black, thick] (E) -- (A);
% \filldraw[fill=gray!30, draw=black] (O) -- (A) -- (B) -- (O);

% \draw[thick, black, dashed] (-1.06,-1.06) .. controls (-1.06, -1.4155) and (-0.555,-2.0555) .. (0,-1.5); 
%\draw[thick, black, dashed] (0,0) [partial ellipse=270:360:1.5cm and 1.5cm] ;
% \draw[thick, black, dashed] (O) arc (350:370:1.5cm);

\coordinate (E) at (0.3,0);
\coordinate (F) at (1.1,0);
\coordinate (G) at (2.3,0);

\coordinate (b1) at (-0.45,-0.89);
\coordinate (b2) at (-0.1,1);

\draw [thick] (E) -- ($(E)+0.5*(b1)$) node [below] {$C^i_{t_{ij}}$};
\draw [thick] (F) -- ($(F)+0.7*(b1)$);
\draw [thick] (G) -- ($(G)+0.2*(b1)$);

\draw [thick] (E) -- ($(E)+0.5*(b2)$) node [above] {$C^j_{t_{ij}}$};
\draw [thick] (F) -- ($(F)+0.7*(b2)$);
\draw [thick] (G) -- ($(G)+0.2*(b2)$);

% \draw [thick,domain=-30:30] plot ({0.3*cos(\x)}, {0.3*sin(\x)});
% \draw [thick,domain=-25:25] plot ({1.1*cos(\x)}, {1.1*sin(\x)}) node [above] {$C_{r_s,ij}$};
% \draw [thick,domain=-5:5] plot ({2.3*cos(\x)}, {2.3*sin(\x)});

\draw[black,thick,-|] (7,-2) -- (7,1) node [left] {$1$};
\draw[black,thick,|-|] (5,-1) node [below] {$-\epsilon_{t_{ij}}$} -- (9,-1) node [below] {$-\epsilon_{t_{ij}}$};
\draw[black,very thick,-latex] (7,-2) -- (7,2) node [above] {$Q_{t_{ij},s}(u)$};
\draw[black,very thick,-latex] (4,-1) -- (10,-1) node [right] {$u$};

% \draw[black,thick] (4,1) -- (5,1) .. controls (5.5,1) and (6.5,2.7) .. (7,1);
% \draw[black,thick] (10,1) -- (9,1)  .. controls (8.5,1) and (8,1.7) .. (7,1) ;
\draw[black,thick] (4,1) -- (5,1);
\draw[black,thick] (10,1) -- (9,1);
\draw [thick,samples=50,domain=0:1] plot ({-2*\x+7}, {1-4*\x*exp(-1/(1-\x^2))});
\draw [thick,samples=50,domain=0:1] plot ({2*\x+7}, {1+12*\x*exp(-1/(1-\x^2))});
% \draw [thick,samples=100,domain=0:1] plot ({2*\x+7}, {1+exp(-1/(1-\x^2))});

\node(label) [below] at (1.84,0) {$e_{ij}$};

\node(label) at (-2.5,3) {(a)};
\node(label) at (3.7,3) {(b)};
\end{tikzpicture}
\caption[A partition of unity that satisfies interface conditions]{(a) An example domain $\Omega$ split into three regions. We consider the vertex $s$ in the center and the neighborhood $V_s$ containing $s$ and no other vertex of $\Omega$. Along the edge $e_{ij}$, we define the line segments $C^{i}_{t_{ij}}$ that will become the support of the correction function $Q_{t_{ij},s}$.
    (b) An example of a correction function $Q_{t_{ij},s}$. By definition it is continuous, positive, and identically one outside of the region $\left(-\epsilon_{t_{ij}},\epsilon_{t_{ij}}\right)$.}
    \label{fig:ang_corr}
\end{figure}

% \begin{figure}[t]
%     \centering
%     %\includegraphics[width=.75\linewidth]{}
%     \begin{tikzpicture}
% \coordinate (A) at (2.5,0);

% \coordinate (C) at (-0.4,2);
% \coordinate (D) at (-1.8,-2);

% \coordinate (A') at ($ (A)!0.6!(C) $);
% \coordinate (C') at ($ (A)!0.75!(O) $);
% \coordinate (D') at ($ (A)!0.4!(D) $);

% \coordinate (O) at (0,0);
% %\draw[->] (A)--(B) ;
    
% \draw[black, thick] (O) -- (A) node [below] {$s$} ;
% % \draw[black, thick] (O) -- (B);
% % \draw[black, thick] (O) -- (E);
% \draw[black, thick] (O) -- (C);
% \draw[black, thick] (O) -- (D);

% \draw[black, thick] (C)-- (A);
% % \draw[black, thick] (C) -- (B);
% \draw[black, thick] (D) -- (A);
% \draw[black, thick] (D) -- (C);

% \draw[dashed] plot [smooth] coordinates {(A') (C') (D')};
% % \draw[black, thick] (E) -- (A);
% % \filldraw[fill=gray!30, draw=black] (O) -- (A) -- (B) -- (O);

% % \draw[thick, black, dashed] (-1.06,-1.06) .. controls (-1.06, -1.4155) and (-0.555,-2.0555) .. (0,-1.5); 
% %\draw[thick, black, dashed] (0,0) [partial ellipse=270:360:1.5cm and 1.5cm] ;
% % \draw[thick, black, dashed] (O) arc (350:370:1.5cm);
% \draw [thick,domain=-20:20,] plot ({2.5-0.3*cos(\x)}, {0.3*sin(\x)});
% \draw [thick,domain=-20:20] plot ({2.5-1.1*cos(\x)}, {1.1*sin(\x)});
% \draw [thick,domain=-20:20] plot ({2.5-2.2*cos(\x)}, {2.2*sin(\x)});

% \draw[black,very thick,-latex] (6,-2) -- (6,2);
% \draw[black,very thick,-latex] (4,-1) -- (8,-1);

% % \node(label) at (0.8,0.8) {$T_{i_1}$};

% \end{tikzpicture}

%     \caption{.}
%     \label{fig:ang_corr}
%   \end{figure}

Let $\{\eta_s\}_{s\in\mathcal{V}}$ be a smooth partition of unity covering
$\Omega$ and such that $\eta_s$ is compactly supported in $V_s$. We shall modify this
partition of unity to satisfy the interface conditions. We suppose that~$t_{ij}$ is an arc-length parameter for the edge $e_{ij}=T_i\cap T_j$. For each value of~$t_{ij}$, we define two line segments~$C^i_{t_{ij}}$ and~$C^j_{t_{ij}}$ that start on~$e_{ij}$, are tangent to~$g_i^{-1}\bs \nu_i$ and~$g_j^{-1}\bs \nu_j$ respectively, and have length $\epsilon_{t_{ij}}>0$ (see Figure~\ref{fig:ang_corr}). We set $\epsilon_{t_{ij}}$ such that~$C^i_{ij}$ and~$C^j_{ij}$ don't include any other edge $e_{i'j'}$ or
$\partial\Omega$. We also assume that
$\epsilon_{t_{ij}}$ is chosen to vary smoothly with $t_{ij}$ and is less than the maximum of~$e|g^{-1}_i\nabla \eta_s|$ and~$e|g^{-1}_i\nabla \eta_s|$ for all~$s$.

On each line segment, we define a piecewise smooth correction
function $Q_{t_{ij},s}$ as
\begin{equation}
    Q_{t_{ij},s}(u) = 1+\begin{cases}
        -e\left.g^{-1}_i\bs \nu_i \cdot \nabla \eta_s\right|_{e_{ij}} u \exp\lp -\frac{1}{1-(u/\epsilon_{t_{ij}})^2}\rp & -\epsilon_{t_{ij}}<u<0\\
        -e\left.g^{-1}_j\bs \nu_j \cdot \nabla \eta_s\right|_{e_{ij}} u \exp\lp -\frac{1}{1-(u/\epsilon_{t_{ij}})^2}\rp & 0\leq u< \epsilon_{t_{ij}}
    \end{cases}
\end{equation}
where~$u$ is a parameter for the union of~$C^i_{t_{ij}}$ and~$C^j_{t_{ij}}$.
With this definition,~$Q_{t_{ij},s}$ can be smoothly extended by one on either side of~$e_{ij}$. The product rule gives that $\left.g^{-1}_i\bs \nu_i \cdot \nabla Q_{t_{ij},s}\eta_s\right|_{e_{ij}}=0$, so $Q_{t_{ij},s}\eta_s$ satisfies the interface conditions on
$e_{ij}$ (see Figure~\ref{fig:ang_corr}b). 

We note that $Q_{t_{ij},s}$
varies smoothly with $t_{ij}$ because $\eta_s$ and $\epsilon_{t_{ij}}$
do. With these assumptions, we have that $Q_{t_{ij}}\eta_s$ is smooth on the
interior of $T_i$. The fact that $\eta_s$ must be flat around either end of
$e_{ij}$ will then give that $Q_{t_{ij},s}\eta_s$ is smooth up to $\partial T_i$
for each $i$, even though~$\epsilon_{t_ij}$ might vanish there. We also note that $Q_{t_{ij},s}$ is positive because of the upper bound on~$\epsilon_{t_{ij}}$. We thus have that $Q_{t_{ij},s}\eta_s$ has the same support as
$\eta_s$.

Including all such corrections, we define
$\tilde \zeta_s = \lp\prod_{ij}Q_{t_{ij},s}\rp\eta_s$. By definition,
$\tilde\zeta_s$ will be non-negative and satisfy the interface conditions in
\eqref{eq:elliptic_in_plane} and be smooth on $T_i$ for each $i$. We also know
that $\sum_{s\in\mathcal{V}}\tilde\zeta_{s}$ is never zero in $\Omega$ because
$\{\eta_s\}$ cover $\Omega$. The functions
$\zeta_s = \tilde \zeta_s / \sum_{s'\in\mathcal{V}}\tilde\zeta_{s'}$ will thus
form the desired partition of unity.
\end{proof}
\begin{corollary}
  \label{cor:cornerstodomain}
  Let~$\{V_s\}_{s\in\mathcal{V}}$ and $\{\zeta_s\}$ be as in Lemma~\ref{lem:part_of_unity}. Then the solution of the elliptic interface problem~$v$ identified in Theorems~\ref{thm:Nicaise} and~\ref{thm:Nicaise2} can be written as
  \begin{equation}
      v=v_0+\sum_{s\in\mathcal{V}}\zeta_s\sum_{\substack{\lambda_{n,s}\leq
         (k+1)\\\lambda_{n,s}\neq 0}} a_{n,s} \, r_s^{\lambda_{n,s}} \,
        \tau_{n,s}(\theta_s),
  \end{equation}
where the function~$v_0$ is in~$H^{k+2}(T_i^{\mathrm{o}})$ for each~$i$ and satisfies the interface conditions.
\end{corollary}
In the above theorem, if~$\lambda_{n,s}<1$, for any~$n$, then the solution~$u$
will not be smooth enough for us to take the trace of~$\nabla u$ in the
traditional sense. In this case, we compute the trace along an edge by
evaluating the explicit function expansion terms at the edge and adding the
result to the trace of $v_0$.

\subsection{Regularity of the pull-back of the Laplace-Beltrami problem}\label{sec:unif_elip}

In order to see the equivalence between the interface form of the
Laplace-Beltrami problem~\eqref{eq:interface_form} on $\Gamma$ to
problem~\eqref{eq:elliptic_in_plane}, we must define the trace operator and a
function space that is smooth enough so that we may take the trace of the
derivative. To define this space, we note that on a piecewise smooth Lipschitz
surface, the spaces $H^s(\Gamma_i^{\mathrm{o}})$ may be defined for each $i$ and
for any $s\in \bbR$ through an analogue of Definition~\ref{def:sobelev_lip} based on smooth parameterizations,
even though $\Hs$ may not exist. We thus define the
space of piecewise $H^s$ functions as follows:
\begin{definition}
For any $s\geq 0$, set
\begin{equation}
    \mathcal{H}^s(\Gamma):= \left\{ v\in L^2(\Gamma):\; v|_{\Gamma_i^{\mathrm{o}}}\in
    H^s(\Gamma_i^{\mathrm{o}}) \text{ for all } i \right \} .
\end{equation}
\end{definition}
Having identified the appropriate function space, we now formally state the
definition of the trace along an edge:
\begin{definition}[Trace]
  Suppose~$\bs x_i$ is a smooth parameterization of~$\Gamma_i$ with domain $T_i$.
  If~$u\in H^s(\Gamma_i^{\mathrm{o}})$ for some~$s>\frac 12$, then the trace
  of~$u$ is defined as
\begin{equation}
    \tr_{\partial \Gamma_i}u:= \tr_{\partial T_i} (u\circ \bs x_i)\circ \bs x_i^{-1},
\end{equation}
where~$\tr_{\partial T}$ is the usual trace operator for the bounded Lipschitz domain~$T$, which is defined in Theorem~18.1 of~\cite{Leoni2017} .

Furthermore, if~$v\in \mathcal{H}^s(\Gamma)$ for some~$s>\frac12$, then the trace of~$v$ along a surface edge is defined as
\begin{equation}
     \tr_{\partial \Gamma_i}v =  \tr_{\partial \Gamma_i} \lp v|_{\Gamma_i^\interior}\rp.
\end{equation}
\end{definition}
By Theorem~18.1 of~\cite{Leoni2017}, we know that the range of the trace operator is~$H^{s-\frac{1}{2}}(\partial\Gamma_i)$, which may be defined equivalently to~$H^{s-\frac{1}{2}}(\Gamma_i)$ in Definition~\ref{def:sobelev_lip}.

\begin{theorem}
    The operator~$\operatorname{tr}_{\partial\Gamma_i}$ is independent of the local parameterization used in its definition.
\end{theorem}
\begin{proof}
We begin by showing that~$C^\infty(\Gamma_i)$ is dense in~$H^s(\Gamma_i^{\mathrm{o}})$ for all~$s>0$. We suppose that~$\bx_i$ is a smooth parameterization of~$\Gamma_i$ with domain~$T_i$. By definition,~$H^s(\Gamma_i^{\mathrm{o}})=\left\{f\in L^2(\Gamma_i)| f\circ\bx_i \in H^s(T_i)\right\}$ and $C^\infty(\Gamma_i)=\left\{f\in C(\Gamma_i)| f\circ\bx_i \in C^\infty(T_i)\right\}$. The standard density result then implies that~$C^\infty(\Gamma_i)$ is dense in~$H^s(\Gamma_i^{\mathrm{o}})$.

Next, we note that if~$v\in C^\infty(\Gamma_i)$,
then~$\tr_{\partial\Gamma_i}v$ is independent of the smooth local parameterization used
in its definition. The density of of~$C^\infty(\Gamma_i)$ thus implies that the trace operator is independent of the local parameterization.
\end{proof}

We now state and prove the equivalence of the interface form of the Laplace-Beltrami problem with Dirichlet boundary conditions on a single surface patch to the above elliptic problem in the following theorem. We shall specifically consider surface patches consisting of several faces: $\tilde \Gamma =\cup_{i\in \mathcal{I}}\Gamma_i$, where $\mathcal{I}$ is a subset of~$\{1,\ldots,N\}$.

\begin{theorem}\label{thm:equivalent_inter}
    Let $\tilde\Gamma$ be a patch of $\Gamma$ parameterized by a piecewise smooth parameterization $\tilde\bx$ with domain $U$. Also suppose that $T_i=\tilde\bx^{-1}(\tilde \Gamma\cap\Gamma_i)$. Finally, let $f\in L^2(\tilde\Gamma)$ and let $c\in L^2(\partial\tilde \Gamma)$. A function~$u$ solves the interface form of the Laplace-Beltrami problem on the $\tilde\Gamma$ with right hand side $f$ and Dirichlet boundary data $c$ if and only if~$u\circ \tilde\bx^{-1}$ solves the elliptic interface problem in the plane with right hand side $\tilde f=f\circ \tilde\bx^{-1}$, derivative matching coefficients $\alpha_i= \left(\bs b_i^Tg^{-1}\bs b_i\right)^{-1/2}$, boundary data $c_i=(c \circ \tilde\bx^{-1})|_{\partial T_i \setminus \cup_{j\neq i} \partial T_j}$, and~$L_i$ being the pull-back from $\Gamma_i$ of the Laplace-Beltrami operator by~$\tilde\bx$~\eqref{eq:LB_expl}.
\end{theorem}
\begin{proof}
% Let $\bx_S$ be the parameterization described in Theorem~\ref{thm:acceptable_param}. The union of the triangles $T_{i_j}$ is an acceptable domain of the elliptical interface problem in the plane because each pair of triangles only overlaps at the origin or along the edge $\theta=\frac{2\pi j}{N_S}$ for some $j$. We may also see that the pull-back of the Laplace-Beltrami operator~\eqref{eq:pull-back} is of the required form with smooth coefficients coefficients on each triangle and uniformly elliptic by Theorem~\ref{thm:unif_elip}. The boundary data $c_i$ will be in $L^2(\partial T_i \setminus \cup_{j\neq i} \partial T_j)$ because $c$ is in $L^2(\partial\Gamma_S)$ and the parameterization~$\bx_S$ is Lipschitz. It is also clear that the right hand side $f\circ \bx_S^{-1}|_{T_i^{\mathrm{o}}}\in H^k(T_i^{\mathrm{o}})$ for~$i=1,\ldots,N$ because of Definition~\ref{def:sobelev_lip}.1. We thus have that the elliptic interface problem satisfies the above regularity requirements.

We first check that the domain and operators in the plane satisfy the requirements of Definition~\ref{def:ellip_int}. The domain of the piecwise smooth local parameterization $\tilde\bx$ can be decomposed into subdomains $T_i$ corresponding to each face in $\tilde\Gamma$. Each pair of these $T_i$'s either overlap along an edge or just at a vertex because the parameterization $\tilde\bx$ is injective and $\tilde\Gamma$ is a subsurface of a piecewise smooth Lipschitz surface.

Next, we may use the explicit forms of the surface gradient and divergence above, to see that the operators $L_i$ have the form
\begin{equation}  
\begin{aligned}\label{eq:LB_expl}
L_i u &= 
\frac{1}{\sqrt{\det g_i}}
    \begin{bmatrix}
      \partial_\theta & \partial_\phi
    \end{bmatrix}  \sqrt{\det g_i} \, g_i^{-1}
    \begin{bmatrix}
      \partial_\theta\\
      \partial_\phi
    \end{bmatrix}  u \\
    &= \text{trace}\left[g_i^{-1}     \begin{bmatrix}
      \partial_\theta^2&\partial_\theta\partial_\phi\\
      \partial_\theta\partial_\phi&\partial_\phi^2
    \end{bmatrix}  u \right]+\frac{1}{\sqrt{\det g_i}}\lp\begin{bmatrix}
      \partial_\theta & \partial_\phi
    \end{bmatrix}  \sqrt{\det g_i} \, g_i^{-1}\rp \begin{bmatrix}
      \partial_\theta\\
      \partial_\phi
    \end{bmatrix}  u,
    \end{aligned}
\end{equation}
where $g_i$ is the surface metric on $T_i$~\eqref{eq:metric}. Letting
\[
  \bs h_i=\frac{1}{\sqrt{\det g_i}}\lp\begin{bmatrix}
    \partial_\theta & \partial_\phi
  \end{bmatrix}  \sqrt{\det g_i} \, g_i^{-1}\rp,
\]
using the definition of the metric, and the fact that $\tilde\bx$ is piecewise smooth, we
have that $L_i$ is an elliptic operator on each $T_i$, though it may not be
uniformly elliptic.
The Lipschitz property of $\tilde\bx$ will also give that $\tilde f$, $\alpha_i$,
and $c_i$ have the required integrability. We have thus identified the elliptic
interface problem corresponding to the interface form Laplace-Beltrami problem
on $\tilde\Gamma$.

We now check that~\eqref{eq:elliptic_in_plane} is equivalent to
\eqref{eq:interface_form}. The first equation follows directly from the
definition of the pull-back of the Laplace-Beltrami operator. The second and
fourth equations in~\eqref{eq:elliptic_in_plane} are equivalent to the trace
conditions in~\eqref{eq:interface_form} because of our definition of the trace.
To see the equivalence of the derivative matching conditions, we let $\bs t_i$ be
the unit tangent to $\partial T_i$. The vector field
$\begin{bmatrix}\tilde\bx_\theta& \tilde\bx_\phi\end{bmatrix}\bs t_i$ is then tangent to
$\partial \Gamma_i$. We also write the binormal vector $\bs b_i$ to
$\partial\Gamma_i$ as $\begin{bmatrix}\tilde\bx_\theta& \tilde\bx_\phi\end{bmatrix}\bs w_i$.
Combining these gives that
\begin{equation}
  0= \begin{bmatrix}\tilde\bx_\theta& \tilde\bx_\phi\end{bmatrix}\bs t_i\cdot \bs b_i = \begin{bmatrix}\tilde\bx_\theta& \tilde\bx_\phi\end{bmatrix}\bs t_i \cdot \begin{bmatrix}\tilde\bx_\theta& \tilde\bx_\phi\end{bmatrix}\bs w_i = \bs t_i \cdot g_i \bs w_i,
\end{equation}
where $g_i$ is the surface metric~\eqref{eq:metric}. We thus have that $g_i \bs w_i$ is perpendicular to $\bs t_i$. This gives that $\bs w_i =\alpha_i g^{-1}_i \bs \nu_i$, where the value of $\alpha_i$ given above guarantees that $\bs b_i$ is normalized. We may then use the definition of the surface gradient to compute
\begin{equation}
  \begin{aligned}
    \bs b_i \cdot \tr_{\partial\Gamma_i} \gradg u &= \alpha_i \bs \nu_i\cdot \tr_{T_i}(g_i^{-1} g_i \nabla (u\circ\tilde\bx^{-1})) \\
    &= \alpha_i \bs \nu_i\cdot \tr_{T_i} \nabla (u\circ\tilde\bx^{-1}).
  \end{aligned}
\end{equation}
Applying this identity to both sides of the edge $e_{ij}$ will give the equivalence of the derivative matching conditions. We have thus proved that $u\circ \tilde\bx^{-1}$ satisfies each part of~\eqref{eq:elliptic_in_plane} if and only if $u$ satisfies each part of~\eqref{eq:interface_form}.
\end{proof}

With the above theorem, we can see how we might apply Theorems~\ref{thm:Nicaise} and~\ref{thm:Nicaise2} to give results on the Laplace-Beltrami problem. It remains, however, to show that there are parameterizations such that the elliptic interface problem identified in Theorem~\ref{thm:equivalent_inter} is regular. This will be simplest if we specifically consider $\tilde\Gamma$ to be the union of faces touching a surface vertex $S$: $\Gamma_S = \cup_{\{i:S\in\Gamma_i\}}\Gamma_i$. We thus wish to prove the following theorem.

\begin{theorem}[Suitable parameterization]\label{thm:acceptable_param}
  There exists a piecwise smooth parameterization $\bx_S$ of $\Gamma_S$ with domain $U_S$ such that the elliptic interface problems identified in Theorem~\ref{thm:equivalent_inter} is regular.
\end{theorem}
To prove this theorem, we start by noting that the pull-back of the
Laplace-Beltrami operator by a smooth parameterization of a single face is
uniformly elliptic.
\begin{lemma}\label{thm:unif_elip}
Let~$\Gamma_i$ be a closed face of a piecewise smooth Lipschitz surface~$\Gamma$ with a smooth parameterization~$\bx_i$ that maps the triangle $T_i$ to $\Gamma_i$. The pull-back $L_i$ of the Laplace-Beltrami operator under~$\bx_i$,
\begin{equation}
  L_i u  
    :=  \LB (u\circ \bx_i^{-1})\circ \bx_i,\label{eq:pull-back}
    \end{equation}
    is uniformly elliptic with smooth coefficients on~$T_i$, the domain of $\bx_i$. 
\end{lemma}
\begin{proof}
  The coefficient in front of each derivative term in~\eqref{eq:LB_expl} is
  in~$C^\infty(T_i)$ because~$\bx_i$ was assumed to be a smooth diffeomorphism
  on~$T_i$ up to its boundary. To see that the pull-back is uniformly elliptic,
  we note that the partial derivatives of~$\bx_i$ are bounded. The trace of~$g$
  is therefore bounded above. Since $g^{-1}$ is positive definite, this in turn
  ensures that the eigenvalues of~$g^{-1}$ are bounded away from zero. The
  operator $L_i$ is thus uniformly elliptic on the closed triangle~$T_i$.
\end{proof}

The parameterizations in Lemma~\ref{thm:unif_elip} cannot be used to study the
effect of surface edges on the solution of the Laplace-Beltrami problem because
they do no cover the edges. Instead, we use them as building blocks to create the
desired local parameterizations of the patches $\Gamma_S$: we shall combine the
parameterizations of several faces into a single piecewise smooth
parameterization by manipulating the domain of the parameterizations of the
involved faces. The first such manipulation is contained in the following lemma.

\begin{lemma}\label{lem:new_param}
Let~$\Gamma_i$ be a closed face of a piecewise smooth Lipschitz surface~$\Gamma$. Also let~$T_i\subset\bbR^2$ be any closed non-degenerate triangle. There exists a smooth parameterization~$\bx_i$ of~$\Gamma_i$ whose domain is~$T_i$. 
\end{lemma}
\begin{proof}
Let~$\tilde\bx$ be a smooth parameterization mapping some triangle~$\tilde T\subset\bbR^2$ to~$\Gamma_i$, which must exist by definition. Let the vertices of~$\tilde T$ be~$\tilde{\bs v}_1,\tilde{\bs v}_2,$ and~$\tilde{\bs v}_3$. Let the vertices of~$T_i$ be $\bs v_1, \bs v_2,$ and~$\bs v_3$. We define the affine transformation $\chi:\bbR^2\to\bbR^2$ as
\begin{equation*}
    \chi(\bs v) = \begin{bmatrix}
      {\bs v}_2-\bs v_1&{\bs v}_3-\bs v_1
    \end{bmatrix}\begin{bmatrix}
      \tilde{\bs v}_2-\tilde{\bs v}_1&\tilde{\bs v}_3-\tilde{\bs v}_1
    \end{bmatrix}^{-1}(\bs v-\tilde{\bs v}_1)+\bs v_1.
\end{equation*}
It is clear that~$\chi(\tilde T) = T_i$, since if $a_1,a_2$, and $a_3$ are three real numbers that add to 1, then
\begin{equation*}\chi(a_1\tilde{\bs v}_1+a_2\tilde{\bs v}_2+a_3\tilde{\bs v}_3)=a_1\bs v_1+a_2\bs v_2+a_3\bs v_3.
\end{equation*}
The transformation~$\chi$ is also a smooth diffeomorphism because the matrices in its definition are non-singular. The parameterization~$\bx_i=\tilde \bx\circ \chi^{-1}$ is therefore the desired one.
\end{proof}

In order to convert the interface form of the Laplace-Beltrami problem into a regular interface problem in the plane, we shall construct a piecewise smooth parameterization $\bx_S$ that maps its domain onto several adjacent faces. We construct $\bx_S$ piecewise, by using the above lemma to pick parameterizations of several faces such their domains are adjacent triangles in the plane. The above lemma is not sufficient, however, to ensure that $\bx_S$ is continuous: the parameterizations of two adjacent faces need not agree along the edge shared by their faces. To force the paramaterizations to agree, we shall suppose that they preserve the arclength along that edge. We prove this is possible in the following lemma:

\begin{lemma}\label{lem:arc-length}
Let~$\Gamma_i$ be a face of a piecewise smooth surface~$\Gamma$ and $e_{ij_1}$ and~$e_{ij_2}$ be edges of~$\Gamma_i$. There exists a parameterization~$\bx_i$ of~$\Gamma_i$ whose triangular domain~$T$ has edges~$e_1$, $e_2$, and~$e_3$, such that $e_1$ and $e_2$ are mapped onto $e_{ij_1}$ and~$e_{ij_2}$ in an arclength-preserving manner.
\end{lemma}
\begin{proof}
Let~$\tilde\bx$ be a smooth parameterization mapping some triangle~$\tilde T\subset\bbR^2$ to~$\Gamma_i$, let $s_1=|e_{ij_1}|$, and let~$s_2=|e_{ij_2}|$. By Lemma~\ref{lem:new_param}, we may assume that
\begin{equation*}
    \tilde T = \{ (\theta,\phi)\in\bbR^2\quad |\quad 0\leq \theta, \phi \;\text{ and }\; \theta/s_1 \leq 1- \phi/s_2  \},
\end{equation*}
and that $\tilde\bx$ maps the lines~$\phi=0$ and~$\theta=0$ onto~$e_{ij_1}$
and~$e_{ij_2}$, respectively. We now construct a reparameteriation $\Phi$
of~$\tilde T$, such that $\bx_i'=\tilde\bx\circ \Phi$ maps the edge~$\phi=0$
onto~$e_{ij_1}$ in an arclength preserving manner. Afterwards, we construct a
new reparameterization such that the edge $\theta=0$ is also mapped
onto~$e_{ij_2}$ in an arclength preserving manner.

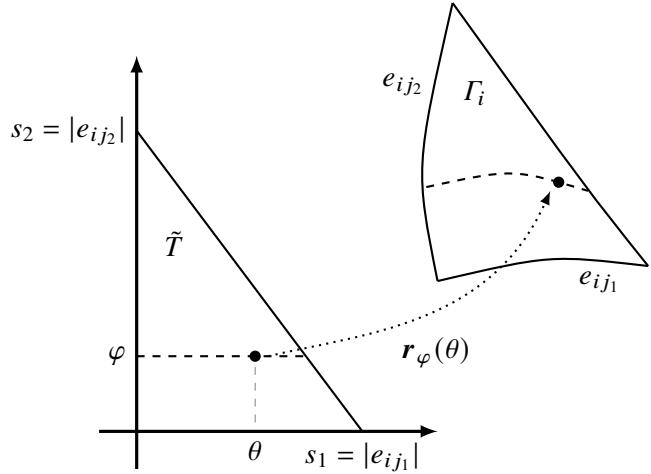
\begin{figure}
    \centering
    \begin{tikzpicture}
    \coordinate (O) at (0,0);
    \coordinate (A) at (3,0);
    \coordinate (B) at (0,4);
    \draw[black,thick] (O) -- (A) node [pos=0.515,below] {} node [below] {$s_1=|e_{ij_1}|$};
    \draw[black, thick] (O) -- (B) node [left] {$s_2=|e_{ij_2}|$};
    \draw[black, thick] (A) -- (B);
    \draw[black, thick, dashed] (0,1) -- (2.25,1) node[pos=0,left] {$\phi$} node [pos=0.7,circle,fill,inner sep=1.5pt] {} node [pos=0.7, pin={[pin distance=0.85cm]270:$\theta$}] {}; 
    
    \draw[black,very thick, -latex] (-0.5,0) -- (4,0) ;%node [right] {$\theta$};
    \draw[black,very thick, -latex] (0,-0.5) -- (0,5) ;%node [above] {$\phi$};
    \node(label) at (0.5,2.5) {$\tilde T$};

\coordinate (A') at (6.8,2.2);
\coordinate (B') at (4.2,5.7);

\coordinate (O') at (4,2);

\node(label) at (4.5,4.5) {$\Gamma_i$};

\draw[black, thick] (O') .. controls (5.5, 2.35) .. (A') node [pos=0.8, below] {$e_{ij_1}$};
\draw[black, thick] (O') .. controls (3.7, 3.5) .. (B') node [pos=0.8, left] {$e_{ij_2}$};

\draw[black, thick] (A') .. controls (6, 3.2) .. (B');
\draw[black, thick, dashed] (3.85, 3.25) ..  controls (5,3.5) .. (6, 3.2) node [pos=0.85,circle,fill,inner sep=1.5pt] {};

\draw[black, thick, dotted, -latex] (1.75, 1.0) ..  controls (3,1.3) and (4.7,1.4) .. (5.5, 3.2) node [pos=0.4,below right] {$\boldsymbol r_\phi(\theta)$};
    
    \end{tikzpicture}
    \caption[Reparamaterizing a face by arc-length]{The triangle $\tilde T$ is mapped to the face $\Gamma_i$. The $\theta$ and $\phi$ axes are mapped onto the edges $e_{ij_1}$ and $e_{ij_2}$, respectively. 
    The parameterization $\tilde{\bx}$ maps the line of constant $\phi$ to the curve $\bs{r}_\phi$ in $\Gamma_i$.
    In Lemma~\ref{lem:arc-length}, we construct a reparameterization $\Phi:\tilde T\to\tilde T$ such that this mapping is arc-length preserving for each $\phi$.}
    \label{fig:arc-reparam}
\end{figure}

We begin by defining the family of
curves~$\bs r_\phi(\theta) = \tilde \bx(\theta,\phi)$, see
Figure~\ref{fig:arc-reparam}. We also let the arclength along the
curve~$\bs r_\phi$ from~$\bs r_\phi(0)$ to~$\bs r_\phi(\theta)$ be denoted
by
\[
  s_\phi(\theta)= \int_0^\theta |\partial_\theta \tilde\bx(\theta',\phi)| du'.
\]
We define our reparameterization~$\Phi:\tilde T\to \bbR^2$ by
\begin{equation}
    \Phi(\theta,\phi) = \begin{pmatrix}\frac{s_\phi^{-1}(\theta)}{s_\phi^{-1}\lp s_1-\frac{s_1\phi}{s_2} \rp}\lp s_1-\frac{s_1\phi}{s_2}  \rp  \\\phi\end{pmatrix},
\end{equation}
where $s_\phi^{-1}$ is the inverse of $s_\phi$. We see that $\bx_i$ has the desired property that $\bx_i'(\theta,0)=\tilde \bx(\Phi(\theta,0))$ is an arclength parameterization of~$e_{ij_1}$ because
\begin{equation*}
    \left|\partial_\theta\bx_i'(\theta,0)\right|=\left|\partial_\theta[\tilde \bx(\Phi(\theta,0))]\right| = \left|\partial_\theta\tilde\bx|_{\Phi(\theta,0)}\;\frac{s_1\;|\partial_\theta\tilde\bx|^{-1}|_{\Phi(\theta,0)}}{s_0^{-1}(s_1)} \right| = 1.
\end{equation*}
To see that $\bx_i'$ is a smooth diffeomorphism mapping $\tilde T$ to $\Gamma_i$, it is enough to show that $\Phi(\tilde T)=\tilde T$ and that $\Phi$ is a smooth diffeomorphism from $\tilde T\to \tilde T$. The fact that $\Phi(\tilde T)=\tilde T$ follows from the monotonicity of $s_\phi^{-1}(\theta)$ with respect to $\theta$.
 To see that~$\Phi$ is a smooth map from~$\tilde T\to \tilde T$, we note that the inverse function theorem gives that
\begin{equation*}
    \frac{\lp1-\frac \phi{s_2} \rp}{s_\phi^{-1}\lp s_1-\frac{s_1\phi}{s_2} \rp}  = \frac{\lp1-\frac \phi{s_2} \rp}{\lp s_1-\frac{s_1\phi}{s_2} \rp|\partial_\theta \tilde\bx(\theta,\phi)|^{-1}+O\lp\lp1-\frac \phi{s_2} \rp^2\rp}.
\end{equation*}
Since the right hand side is a smooth function of~$\phi$, even near~$\phi=s_2$, the reparameterization $\Phi$ is smooth. A similar argument can be used to prove that~$\Phi^{-1}$ is also smooth so that $\Phi$ is a smooth diffeomorphism.

 We thus have that~$\bx_i'=\tilde\bx\circ \Phi$ is a smooth parameterization of~$\Gamma_i$ such that the edge~$\phi=0$ of~$\tilde T$ is mapped to~$\Gamma_i$ in an arc-length preserving manner. Repeating the above argument with $\theta$ and $\phi$ swapped will give a new parameterization $\bx_i$ such that both the edges~$\theta=0$ and~$\phi=0$ are mapped in an arc-length preserving manner.
\end{proof}
We note that the affine transformation~$\chi$ in the proof of Lemma~\ref{lem:new_param} preserves the arclength of any edge of~$T_i$ that gets mapped to an edge of~$\tilde T$ with the same length. Thus, if we compose the parameterization in the above proof with~$\chi$, we have a parameterization of~$\Gamma_i$ satisfying the requirements of the following theorem:
\begin{theorem}\label{thm:face_param}
  Let~$\Gamma_i$ be a face of a piecewise smooth surface~$\Gamma$ with edges~$e_{ij_1}$, $e_{ij_2}$, and~$e_{ij_3}$. Also let~$T\subset\bbR^2$ be any triangle such that two of its sides, $e_1$ and~$e_2$, have the same lengths~$e_{ij_1}$ and~$e_{ij_2}$. There exists a parameterization~$(T,\bx)$ of~$\Gamma_i$ such that $e_1$ and~$e_2$ are mapped onto $e_{ij_1}$ and~$e_{ij_2}$ in an arclength-preserving manner.
\end{theorem}
We now combine the parameterizations of all the faces that share a vertex~$S$ into a parameterization of the union of those faces,~$\Gamma_S$. This will give us the required piecewise smooth parameterization $\bx_S$.

\begin{proof}[Proof of Theorem~\ref{thm:acceptable_param}]
For every vertex~$S$ of~$\Gamma$, we define~$\eta_S=\{i_1,\ldots, i_{N_S}\}$ to be the set of faces that touch~$S$. We shall assume that~$\eta_S$ is sorted so that~$e_{i_j i_{j-1}}$ is non-empty for each~$j$, with~$j-1$ being interpreted mod~$N_S$. For each of these faces~$\Gamma_{i_j}$, we define~$T_{i_j}$ to be the triangle with vertices
\begin{equation*}
    \bs v_1 =\begin{pmatrix}0\\0\end{pmatrix},\quad{\bs v}_2= |e_{i_{j}i_{j-1}}|\begin{pmatrix}\cos \lp\frac{2\pi}{N_S} (j-1)\rp\\ \sin \lp\frac{2\pi}{N_S} (j-1)\rp\end{pmatrix},\quad \text{and}\quad {\bs v}_3=|e_{i_{j}i_{j+1}}|\begin{pmatrix}\cos\lp \frac{2\pi}{N_S} j\rp\\ \sin \lp\frac{2\pi}{N_S} j\rp\end{pmatrix}.
\end{equation*}
We use Theorem~\ref{thm:face_param} to pick a parameterization~$(T_{i_j},\bs x_{i_j})$ of~$\Gamma_{i_j}$ such that the edges of~$T_{i_j}$ touching the origin are mapped in an arc-length preserving manner to~$e_{i_j i_{j-1}}$ and~$e_{i_j i_{j+1}}$. The setup resulting from defining~$T_{i_j}$ and~$\bx_{i_j}$ for~$j=1,\ldots, N_S$ is depicted in Figure~\ref{fig:surf_param}.

\begin{figure}[t]
    \centering
    \begin{tikzpicture}
\coordinate (A) at (2.5,0);
\coordinate (B) at (0.62,1.9);
\coordinate (C) at (-0.97,0.71);
\coordinate (D) at (-2.43,-1.76);
\coordinate (E) at (0.53,-1.62);
\coordinate (O) at (0,0);
%\draw[->] (A)--(B) ;
    
\draw[black, thick] (O) -- (A);
\draw[black, thick] (O) -- (B);
\draw[black, thick] (O) -- (E);
\draw[black, thick] (O) -- (C);
\draw[black, thick] (O) -- (D);

\draw[black, thick] (B)-- (A);
\draw[black, thick] (C) -- (B);
\draw[black, thick] (D) -- (E);
\draw[black, thick] (D) -- (C);
\draw[black, thick] (E) -- (A);
\filldraw[fill=gray!30, draw=black] (O) -- (A) -- (B) -- (O);

\draw[black,very thick,-latex] (0,-2) -- (0,2);
\draw[black,very thick,-latex] (-3,0) -- (3,0);

\node(label) at (0.8,0.8) {$T_{i_1}$};

\coordinate (A') at (7,3.2);
\coordinate (B') at (5,5.2);
\coordinate (C') at (4.3,3.7);
\coordinate (D') at (2.7,2.8);
\coordinate (E') at (6.2,2.1);

\coordinate (O') at (5,3);

\draw[black, thick] (O') .. controls (6, 3.5) .. (A');
\draw[black, thick] (O') .. controls (5.2, 3.5) .. (B');
\draw[black, thick] (O') .. controls (4.5, 3.3) .. (C');
\draw[black, thick] (O') .. controls (3.8, 2.5) .. (D');
\draw[black, thick] (O') .. controls (5.5, 2.9) .. (E');

\draw[black, thick] (A') .. controls (6, 4) .. (B');
\draw[black, thick] (B') .. controls (4.5, 4.2) .. (C');
\draw[black, thick] (C') .. controls (3, 3.55) .. (D');
\draw[black, thick] (D') .. controls (4, 2) .. (E');
\draw[black, thick] (E') .. controls (6.5, 2.2) .. (A');
\filldraw[fill=gray!30, draw=black] (O') .. controls (6, 3.5) .. (A') .. controls (6, 4) .. (B') .. controls (5.2, 3.5) .. (O');

\filldraw[black] (O') circle (2pt) node[anchor=north]{$S$};
\filldraw[black] (O) circle (2pt) ;
%\filldraw[gray] (A') circle (2pt);
%\filldraw[gray] (B') circle (2pt);
%\filldraw[gray] (C') circle (2pt);
%\filldraw[gray] (D') circle (2pt);
%\filldraw[gray] (E') circle (2pt);

\node(label1) at (5.5,4) {$\Gamma_{i_1}$};
\node(label2) at (7.2,2.6) {$\Gamma_S$};

\draw[-latex,black,very thick, dashed] (1.7,0.3) .. controls (4,1) and (5,2) .. (6.2,3.7) node [pos=0.45, below right] {$\boldsymbol{x}_{i_j}$};

\end{tikzpicture}

    \caption[Parameterization of~$\Gamma_S$]{We construct a piecewise smooth parameterization of the surface~$\Gamma_S$ by piecing together parameterizations of each face of~$\Gamma_S$. Using Theorem~\ref{thm:face_param}, we pick parameterizations of each face~$\Gamma_{i_j}$ such that their domains to form a simple polygon.}
    \label{fig:surf_param}
\end{figure}
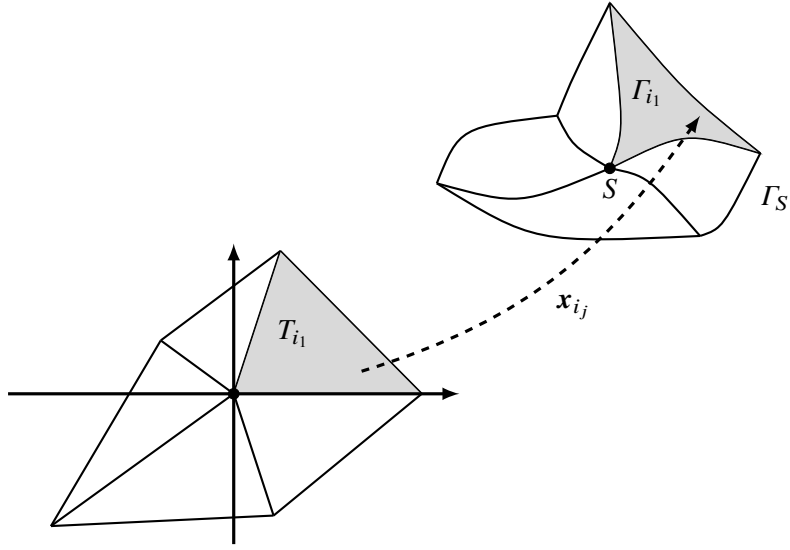

We then define the neighborhood~$U_S$ to be the interior of~$\cup_{j=1}^{N_S} T_{i_j}$ and define the map~$\bs x_s:U_S\to \Gamma$ piecewise by~$\bs x_S|_{T_{i_j}}=\bs x_{i_j}$. We note that because the~$\bx_{i_j}$'s were chosen to preserve arc-length on the edges contained in~$U_S$, the parameterizations $\bx_{i_j}$ and~$\bx_{i_{j-1}}$ will agree on the edge where their domains intersect. The map~$\bx_S$ will also be Lipschitz because each~$\bx_{i_j}$ is smooth. We also know that since~$\bs x_S|_{T_{i_j}}$ is smooth for each~$j$, the pull-back of the Laplace-Beltrami operator will be uniformly elliptic on the pre-image of each face, by Lemma~\ref{thm:unif_elip}. The smoothness of $\bx_S$ will also ensure that the functions $g^{-1}_i,h_i$ and $\alpha_i$ are smooth. The elliptic interface problem identified in Theorem~\ref{thm:equivalent_inter} is thus regular when $\bx_S$ is as defined above.
\end{proof}

We have now proved that there exists a picewise smooth parameterization such that the pull-back
of the Laplace-Beltrami problem from a single patch is a regular elliptic
interface form. In the next section, we identify the expansion pairs for each
vertex in that interface problem. This will allows us to apply
Theorems~\ref{thm:Nicaise} and~\ref{thm:Nicaise2} to the interface form of the
Laplace-Beltrami problem on a single patch.

\subsection{Solution of the interface form around a single vertex}\label{sec:interface_form_hols}

In the previous section, we identified the equivalence between the interface form of the Laplace-Beltrami problem
and an interface problem in the plane. We shall now leverage results in the plane to prove that the
weak solution of the Laplace-Beltrami problem solves the interface form. We
do this as follows: First, we state the weak form of the Laplace-Beltrami
problem on a single patch. Next we verify that the expansion powers for each
vertex are real so that Theorems~\ref{thm:Nicaise} and~\ref{thm:Nicaise2} can be
used to prove the equivalence of the weak and interface forms on a single patch
in Theorem~\ref{thm:single_patch_result}.

We shall consider the Laplace-Beltrami problem on a single patch with homogeneous Dirichlet boundary conditions. We first define the functions space we work in, then give the problem statement.
\begin{definition}
Let $\tilde \Gamma$ be a Lipschitz surface with boundary. The Sobolev space $H^1_0(\tilde\Gamma)$ is the set of $v$ in $H^1(\tilde\Gamma)$ with $\operatorname{tr}_{\partial\tilde\Gamma}v=0$.
\end{definition}

\begin{problem}[Weak form on a single patch]
Let $\tilde \Gamma$ be a Lipschitz surface with boundary and let $f\in L^2(\Gamma)$. A function $u\in H^{1}_0(\tilde\Gamma)$ solves the the weak form of the Laplace-Beltrami problem on $\tilde \Gamma$ with homogeneous Dirichlet boundary conditions on $\partial\tilde\Gamma$ if
\begin{equation}
    -\int_{\tilde\Gamma} \gradg u \cdot \gradg v = \int_{\tilde \Gamma} fv, \qquad \text{for all } v\in H^{1}_0(\tilde\Gamma). \label{eq:Weaksinglepatch}
\end{equation}
\end{problem}

In order to apply Theorem~\ref{thm:Nicaise} in a useful manner, we
 need the expansion pairs for each vertex of the surface $\Gamma$. As these only depend on the structure of the elliptic interface problem in an infinitesimal region of the vertex, and the surfaces we consider are piecewise smooth, it is enough to compute the expansion pairs for a corner formed by flat faces and straight edges. An example of such a geometry and the coordinate
system that we consider is in~Figure~\ref{fig:ConeDiagram}. We summarize the
result in the following lemma.

 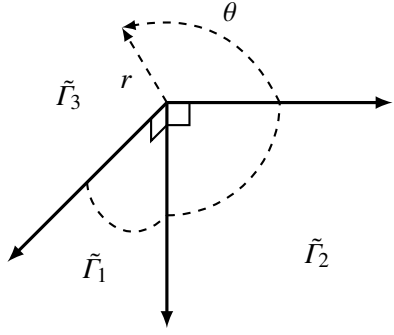
\begin{figure}[t]
    \centering
\begin{tikzpicture}
\coordinate (O) at (0,0);
\coordinate (A) at (0,-3);
\coordinate (B) at (3,0);
\coordinate (C) at (-2.12,-2.12);

\coordinate (A') at (0,-.3);
\coordinate (B') at (.3,0);
\coordinate (C') at (-0.21,-0.21);

\draw[black,very thick,-latex] (O) -- (A);
\draw[black,very thick,-latex] (O) -- (B);
\draw[black,very thick,-latex] (O) -- (C);

\draw[black, thick] (A') -| (B');
\draw[black, thick] (C') -- (-0.21,-0.51) -- (A');

%\draw[black,thick,dashed] 

\node(label) at (2,-2) {$\tilde\Gamma_2$};
\node(label) at (-0.95,-2.2) {$\tilde\Gamma_1$};
\node(label) at (-1.3,0.1) {$\tilde\Gamma_3$};

\coordinate (R) at (-0.6,1);
%\draw[thick, black, dashed] (0,0) [partial ellipse=247:270:3cm and 1.5cm];
\draw[thick, black, dashed] (-1.06,-1.06) .. controls (-1.06, -1.4155) and (-0.555,-2.0555) .. (0,-1.5); 
%\draw[thick, black, dashed] (0,0) [partial ellipse=270:360:1.5cm and 1.5cm] ;
\draw[thick, black, dashed] (0,-1.5) arc (270:360:1.5cm);
%\draw[thick, black, dashed, -latex] (0,0) [partial ellipse=0:130:1.5cm and 0.75cm] node [pos=0.5, above right] {$\theta$};0
\draw[thick, black, dashed, -latex] (1.5,0) .. controls (1.,1) and (0.3,1.155) .. (R) node [pos=0.5, above right] {$\theta$};
\draw[thick, black, dashed, -latex] (O) -- (R) node [pos=0.5, below left] {$r$};

\end{tikzpicture}
    \caption[A surface composed of planar wedges]{An example of a surface $\tilde\Gamma$ composed of three
      infinite planar wedges. In this example, the wedge angles
      are~$\gamma_1=\gamma_2={\pi}/2$ and~$\gamma_3={3\pi}/2$. The
      conic angle is~${5\pi}/2$.}
    \label{fig:ConeDiagram}
\end{figure}
\begin{lemma}\label{lem:corners}
Suppose that~$\tilde \Gamma_1,\ldots,$ $\tilde \Gamma_n$ are a
collection of two-dimensional infinite planar wedges embedded
in~$\bbR^3$.
Let~$\tilde \Gamma$ be the union
of these faces and suppose that all
faces meet at a single vertex, and that~$\tilde \Gamma_i$ shares an edge with~$\tilde \Gamma_{i+1}$ for $i=1,\ldots,n-1$. Lastly, let each face be parameterized by
the polar coordinates~$(r_i,\theta_i)$ with~$\theta_i=0$ on the edge~$e_{i(i-1)}$ and~$\theta_i=\gamma_i$ on~$e_{i(i+1)}$. The piecewise
parameterization of~$\tilde \Gamma$ is then
\begin{equation}
    (r,\theta)=
    (r_i,\theta_i+\sum_{j=1}^{i-1}\gamma_j)  \text{ on } \tilde\Gamma_i.
\end{equation}
The sum $\gamma:=\sum_{i=1}^n\gamma_i$ is known as the conic
angle of the vertex.  If $B_R$ is a ball of radius $R$ centered on the
vertex and $u\in H^{1}(B_R\cap \tilde \Gamma)$ is a solution of
$\Delta_{\tilde\Gamma} u=0$, then one of the following holds.

\begin{enumerate}
    \item The wedges $\tilde \Gamma_1$ and $\tilde \Gamma_n$ share an edge and for all $r < R$ the solution $u$ can
be written as \begin{equation}
  \label{eq:ucorner}
    u(r,\theta)= \sum_{n=-\infty}^\infty a_n \, 
    r^{\frac{2\pi}{\gamma}|n|} \, e^{i\frac{2\pi}{\gamma}n\theta}
\end{equation}
for some set of coefficients~$a_n \in \mathbb C$. Furthermore, if~$k
<1+{2\pi}/\gamma$, then on each face~$\tilde{\Gamma_i}$ we have
that~$u\in H^{k}(B_R\cap \tilde \Gamma_i^{\mathrm{o}})$.
\item The wedges $\tilde \Gamma_1$ and $\tilde \Gamma_n$ do not share an edge, we enforce that~$\tr_{\partial\tilde\Gamma}u=0$, and for all $r < R$ the solution $u$ can
be written as
\begin{equation}
  \label{eq:ucorner_hom}
    u(r,\theta)= \sum_{n=1}^\infty a_n \, 
    r^{\frac{\pi}{\gamma}|n|} \, \sin\lp \frac{\pi}{\gamma}n\theta\rp
\end{equation}
for some set of coefficients~$a_n \in \mathbb C$. Furthermore, if~$k
<1+{\pi}/\gamma$, then on each face~$\tilde \Gamma_i$ we have
that~$u\in H^{k}(B_R\cap \tilde \Gamma_i^{\mathrm{o}})$.
\end{enumerate}
\end{lemma}
Before we prove this lemma, we make a few observations.
First, we note that in the case where~$\tilde \Gamma_1,\ldots, \tilde
\Gamma_n$
are co-planar and $\tilde \Gamma_1$ and $\tilde \Gamma_n$ share an edge,~$\tilde \Gamma$ will be a plane in~$\bbR^3$. The conic
angle~$\gamma$ will be~$2\pi$ and
so~\eqref{eq:ucorner} becomes the usual separation of variables solutions
to Laplace's equation in the plane and is therefore smooth, as we would expect. We also note that if a vertex~$S$ of~$\Gamma$ is artificial, in the sense that it was added in order to triangulate the surface, the conic angle at that vertex will be~$2\pi$. The solution will thus be smooth there, as we would expect. We now prove the lemma.
\begin{proof}
 We begin with the case where $\tilde \Gamma_1$ and $\tilde \Gamma_n$ share an edge, so that vertex is an interior one. We shall use separation of variables and look for a solution of~$\Delta_{\tilde\Gamma}u_n=0$ of the form~$u_n=c_n(r)\tau_n(\theta)$. In polar coordinates the Laplace-Beltrami operator will be
 \begin{equation}
     \Delta_{\tilde\Gamma}  = \frac{\partial^2}{\partial r^2}+\frac1r\frac{\partial}{\partial r}+\frac1{r^2}\frac{\partial^2}{\partial \theta^2}.
 \end{equation}
 Plugging~$u_n$ into this formula will give that~$\tau_n(\theta) = e^{i\lambda_n\theta}$. Using the fact that~$\bx(r,0)=\bx(r,\gamma)$ will give that~$\lambda_n=\frac{2\pi}{\gamma} n$ for~$n\in\mathbb{N}$. Some more calculus will give that
 \begin{equation}
  \begin{aligned}
    c_0(r)&=a_0+b_0\log(r), \\
    c_n(r)&=a_n \,  r^{\frac{2\pi}{\gamma}|n|}+b_n \,
    r^{-\frac{2\pi}{\gamma}|n|}, \qquad \text{for } n \neq 0.
  \end{aligned}
\end{equation}
If we now impose that~$u_n\in  H^1(B_R\cap \tilde
 \Gamma)$, then we find that
 \begin{equation}
     u_n = a_n \,  r^{\frac{2\pi}{\gamma}|n|}\,e^{i\frac{2\pi}{\gamma}n\theta}.
 \end{equation}
It remains to show that we can write the solution~$u$ as sum of the~$u_n$'s. 

We pick the Fourier coefficients~$a_n$ so that $\sum_n u_n$ agrees with~$u$ on~$\partial B_R\cap\tilde \Gamma$. The uniqueness of weak solutions to the Laplace-Beltrami problem on~$B_R\cap\tilde \Gamma$ with Dirichlet boundary conditions (Lemma~\ref{thm:uniqueness} below) will then give that~$u-\sum_n u_n\equiv 0$ on~$B_R\cap \tilde
 \Gamma$. We have thus proved~\eqref{eq:ucorner}.
 
 In order to see the higher regularity of $u$ on each face, we note
 that the most singular terms in the expansion occur when~$n = \pm 1$.
 These terms will be contained in~$H^{k}(B_R\cap \tilde \Gamma_i^{\mathrm{o}})$ if and only if
 \begin{equation*}
     \left\Vert \nabla^k r^{\frac{2\pi}\gamma}e^{\pm
       i\frac{2\pi}\gamma}\right\Vert_{L^2(B_R\cap \tilde \Gamma_i^{\mathrm{o}})}\propto
     \left(\int_0^R
     r^{2(\frac{2\pi}\gamma-k)+1}dr\right)^{1/2}<\infty.
 \end{equation*}
 This occurs precisely when~$k<1+\frac{2\pi}\gamma$.
 
 The proof when $\tilde \Gamma_1$ and $\tilde \Gamma_n$ do not share an edge is
 identical to the case above, except that the Fourier series are replaced by
 sine series because of the boundary conditions.
\end{proof}

\begin{remark}
  We will use the above lemma to determine the singularities in the solution of
  the Laplace-Beltrami problem on surfaces with corners. The singularities
  in~\eqref{eq:ucorner} were first derived in~\cite{Buffa2002} for the special
  case of polyhedral surfaces, i.e. surfaces with only flat faces and straight
  edges, but we have presented Lemma~\ref{lem:corners} to make this section as
  complete as possible. In what follows, we will prove that the solution has
  these same singularities in the more general case where the surface faces are
  curved.
\end{remark}

Now that we know the nature of the singularities of the solution of the
interface form near interior and exterior corners of a surface, we may prove the
following theorem.

\begin{lemma}\label{thm:interfaceholds}
  Let~$\Gamma_S$ be the union of the faces of~$\Gamma$ that touch the
  vertex~$S$. Let~$\Gamma_S$ be parameterized by the piecewise smooth map~$\bx_S$ with
  domain~$U_S$ (from Theorem~\ref{thm:acceptable_param}), and let~$f$ be a
  function in~$\mathcal H^k(\Gamma_S)$ with~$k=0,1,$ or $2$. Also let~$L$ be the
  pull-back of the Laplace-Beltrami operator by~$\bx_S$. If~$\tilde u$ is the
  solution of the elliptic interface problem with homogeneous Dirchlet boundary
  conditions~\eqref{eq:elliptic_in_plane} on~$U_S$ with right hand
  side~$f\circ \bx_S\in\mathcal H^k(U_S)$, then~$u=\tilde u\circ \bx_S^{-1}$ is
  the unique solution of the interface forms of the Laplace-Beltrami
  problem~$\Delta_{\Gamma_S}u=f$ with homogeneous Dirichlet boundary conditions
  on~$\partial\Gamma_S$.

  Further, there exists a neighborhood of each vertex~$S'$ of~$\Gamma_S$ we may
  write~$u$ as
  \begin{equation}
    \label{eq:local_corner}
    u=u_0+
    \begin{cases}
              \sum_{|n|\leq
          \frac{\gamma_S}{2\pi}(k+3) } a_{n,S'} \, 
    r_{S'}^{\frac{2\pi}{\tilde\gamma_{S'}}|n|} \, e^{i\frac{2\pi}{\tilde\gamma_{S'}}n\theta_{S'}}, &\text{if } S'=S,\\[8pt]
    \sum_{|n|\leq
          \frac{\gamma_S}{\pi}(k+3)} a_{n,S'} \, 
    r_{S'}^{\frac{\pi}{\tilde\gamma_{S'}}|n|} \, \sin\lp\frac{\pi}{\tilde\gamma_{S'}}n\theta_{S'}\rp, &\text{otherwise},
  \end{cases}
  \end{equation}
  where~$u_0\in\mathcal{H}^{2+k}(\Gamma_S)$ and~$\gamma_{S'}$ is the conic angle of the vertex~$S'$, i.e. the sum of the corner angles of all of the faces in~$\Gamma_S$ touching the vertex~$S'$.
\end{lemma}
\begin{proof}
  Let~$\mathcal{C}_S$ be the set of corners of~$\Gamma$ that are included
  in~$\Gamma_S$ and let~$\tilde \gamma_{S'}$ be the conic angle of the
  vertex~$S'\in\mathcal{C}_S$ with respect to the surface~$\Gamma_S$. Since the
  radial distance in the plane and on the surface will be proportional in the
  limit as they get small, Lemma~\ref{lem:corners} tells us that the expansion
  powers at the corner~$S'$
  are~$\{{2\pi n}/{\tilde\gamma_{S'}}\}_{n\in\mathbb{Z}}$ if $S'=S$
  or~$\{{\pi n}/{\tilde\gamma_{S'}}\}_{n\in\mathbb{Z}}$ if $S'\neq S$. As
  these are all real and we have already verified that the planar elliptic
  interface problem is regular, we have thus verified that we may apply
  Theorem~\ref{thm:Nicaise}. The solution~$\tilde u$
  of~\eqref{eq:elliptic_in_plane} thus exists and is unique.
  Theorem~\ref{thm:equivalent_inter} then gives that~$\tilde u\circ \bx_S^{-1}$
  is the unique solution of the interface form of the Laplace-Beltrami problem.

Theorem~\ref{thm:Nicaise2} will give the singularity expansions listed in~\eqref{eq:local_corner}.
\end{proof}

We now prove that the solution of the interface form of the Laplace-Beltrami problem is the unique solution of the weak form.

\begin{lemma}\label{thm:uniqueness}
  The weak solution of the Laplace-Beltrami problem on $\Gamma_S$ with homogeneous Dirichlet boundary conditions on $\partial\Gamma_S$ is unique.
\end{lemma}
\begin{proof}
By linearity, we need only check the case when $f=0$. We recall that $u\in H^1_0(\Gamma_S)$ is the weak solution if
\begin{equation}
    \int_{\Gamma_S} \gradg u \cdot \gradg v = 0 \quad \forall v\in H^1_0(\Gamma_S).
\end{equation}
Substituting $u=v$ into the expression gives that $\int_{\Gamma_S} |\gradg u|^2=0$, so $u$ must a constant. The boundary conditions then imply that $u\equiv 0$, and so the solution is unique.
\end{proof}

\begin{theorem}\label{thm:single_patch_result}
  The solution of the interface form of the Laplace-Beltrami problem on $\Gamma_S$ with homogeneous Dirichlet boundary conditions on $\partial\Gamma_S$ exists and is the unique weak solution.
\end{theorem}
\begin{proof}
It only remains to show that the solution of the interface form identified in Lemma~\ref{thm:interfaceholds}~$u$ is a weak solution of the Laplace-Beltrami operator. To prove this, we let $v$ be a test function in $H^1_0(\Gamma_S)$ and let~$\Gamma_{i,\epsilon}=\Gamma_i \backslash \cup_{S'\in\mathcal{C}_S} B_\epsilon(S')$. Since the expansion~\eqref{eq:local_corner} implies that~$u\in
\mathcal{H}^2(\cup_{i\in\eta_S}\Gamma_{i,\epsilon})$, we have that
\begin{multline}
    -\int_{\cup_{i\in\eta_S}\Gamma_{i,\epsilon}} \nabla_{\Gamma}v\cdot\nabla_{\Gamma} u =
    \sum_i\int_{\Gamma_{i,\epsilon}}
    v\nabla_{\Gamma}\cdot\nabla_{\Gamma} u +\int_{e_{i,0}\backslash \cup_{S'}}v\frac{\partial u}{\partial\bs b_i}\\
    +    \sum_{ij}\int_{e_{ij}\backslash \cup_{S'}
      B_\epsilon(S')}v\left(\frac{\partial u}{\partial \binormal}
    +\frac{\partial u}{\partial \binormalj}\right) -\sum_{S'}
    \int_{\partial B_\epsilon(S')} v \hat{\bs r}_{S'} \cdot \nabla_\Gamma
    u.
\end{multline}
Since~$u$ solves the interface problem on~$\Gamma_S$, and~$v\in H^1_0(\Gamma_S)$, most of the boundary terms vanish and we are left with
\begin{equation}
    -\int_{\cup_{i\in\eta_S}\Gamma_{i,\epsilon}} \nabla_{\Gamma}v\cdot\nabla_{\Gamma} u =
    \int_{\cup_{i\in\eta_S}\Gamma_{i,\epsilon}} vf +\sum_{S'} \int_{\partial B_\epsilon(S')} v
    \hat{\bs r}_{S'} \cdot \nabla_\Gamma u.
  \end{equation}

  We now show that the final term vanishes as~$\epsilon \to 0$. First,
  suppose that~$v$ is bounded by a constant~$C$ and that~$S'=S$. If~$\epsilon$
  is small, then~\eqref{eq:local_corner} holds. The contribution of~$u_0$ to
  this integral will vanish as~$\epsilon \to 0$
  since~$u\in \mathcal{H}^2(\Gamma_S)$. To see that that the singular terms
  vanish, we replace the metric in the surface gradient with its limit in the
  corner, denoted by~$\nabla_{\Gamma_S}$, introducing an error that shrinks
  with~$\epsilon$:
\begin{equation}
    \left|\int_{\partial B_\epsilon(S)} v \hat{\bs r}_S \cdot \nabla_\Gamma r_S^{\frac{2\pi}{\gamma_S}|n|} \,
        e^{i\frac{2\pi}{\gamma_S}n\theta_S}\right| \leq C \int_{\partial B_\epsilon(S)}   \left| \hat{\bs r}_S \cdot \nabla_{\Gamma_S} r_S^{\frac{2\pi}{\gamma_S}|n|} \,
        e^{i\frac{2\pi}{\gamma_S}n\theta_S}\right|
        + o\lp|\partial B_\epsilon(S)|\rp.
\end{equation}
Evaluating the remaining derivative and integrating
gives that
\begin{equation}
    \left|\int_{\partial B_\epsilon(S)} v \hat{\bs r}_S \cdot
    \nabla_\Gamma r_S^{\frac{2\pi}{\gamma_S}|n|} \,
    e^{i\frac{2\pi}{\gamma_S}n\theta_S}\right| \leq C \gamma_S \epsilon
    \lp \frac{2\pi}{\gamma_S}|n|
    \epsilon^{\frac{2\pi}{\gamma_S}|n|-1} \rp + o\lp\gamma_S\epsilon\rp.
\end{equation}
The right hand side above vanishes as~$\epsilon\to 0$, and so
\[
  \int_{\partial B_\epsilon(S)} v
  \hat{\bs r}_{S'} \cdot \nabla_\Gamma u\to 0.
\]
We may repeat the above argument for the other vertices of~$\Gamma_S$ to see
that for any bounded~$v\in H^1_0(\Gamma_s)$,
\begin{equation}
   -\int_{\Gamma_S} \nabla_{\Gamma}v\cdot\nabla_{\Gamma} u =
   \int_{\Gamma_S} vf .
\end{equation}
Since bounded functions are dense in~$H^1_0(\Gamma_s)$, we have shown that~$u$
solves the weak form of the Laplace-Beltrami problem on~$\Gamma_S$. Finally, Lemma~\ref{thm:uniqueness} thus gives that $u$ is the unique weak solution.
\end{proof}
\subsection{Interface form on a closed surface}\label{sec:closed_surface}

We now extend the result for a single surface patch~$\Gamma_S$ to rigorously connect the weak and interface forms of the
Laplace-Beltrami problem on the whole surface~$\Gamma$, which is the main result of this paper.

\begin{theorem}[Equivalence of the interface form]
  \label{thm:strongLB}
  If $\Gamma$ is a piecewise smooth Lipschitz surface without boundary and $f\in
  \mathcal{H}^k(\Gamma)$ with $k=0,1,$ or $2$ is mean-zero, then the following
  hold:
  \begin{enumerate}
    \item The weak solution of the Laplace-Beltrami problem~$\LB u =f$ solves the interface form of the Laplace-Beltrami problem~\eqref{eq:interface_form}.
    \item There exists a smooth partition of unity over~$\Gamma$, $\{\zeta_S\}_{S\in \mathcal{C}}$, such that the support of~$\zeta_S$ is strictly contained in~$\Gamma_S$ and there exists a~$u_0\in \mathcal{H}^{2+k}(\Gamma)$ such that
      \begin{equation} \label{eq:expandu}
        u=u_0+ \sum_{S\in \mathcal{C}} \zeta_S \left( \sum_{|n|\leq
          \frac{\gamma_S}{2\pi}(k+3) }
        a_{n,S} \, r_S^{\frac{2\pi}{\gamma_S}|n|} \,
        e^{i\frac{2\pi}{\gamma_S}n\theta_S}\right), 
      \end{equation}
 where $\gamma_S$ is the conic angle for the corner $S$ and
 $(r_S,\theta_S)$ is a local polar coordinate system defined around
 the corner $S$, as previously described in Lemma~\ref{lem:corners}.
\end{enumerate}

\end{theorem}
\begin{proof}
Let~$\{(U_S,\bx_S)\}_{S\in \mathcal{C}}$ be the collection of the local parameterizations defined in the proof of Theorem~\ref{thm:acceptable_param}. To construct the partition of unity, we define $\tilde \zeta_{\bx_S^{-1}(S)}$ as in the proof of Lemma~\ref{lem:part_of_unity} for each $S\in\mathcal{C}$. We then define $\tilde \zeta_S\in C(\Gamma)$ to be $\tilde \zeta_{\bx_S^{-1}(S)}\circ \bx_S^{-1}$ on $\Gamma_S$ and zero on $\Gamma\setminus \Gamma_S$. The functions $\zeta_S = \tilde\zeta_S/\sum_{S'\in\mathcal{C}}\tilde\zeta_{S'}$ are then smooth on each face, satisfy the interface conditions, and form the desired partition of unity.

Now that we have a suitable partition of unity, we define
\begin{equation}
    \tilde f_S := \LB (\zeta_S u),\label{eq:ftilde}
\end{equation} 
for each $S\in\mathcal{C}$. In order to apply Theorem~\ref{thm:single_patch_result}, we must verify that~$\tilde f_S\in L^2(\Gamma)$. To do this, we use the product rule to see that
\begin{equation}
    \tilde f_S =\divg(u\gradg\zeta_S)+\divg(\zeta_S\gradg u)\label{eq:ftilde_exp}.
\end{equation}
 For any~$v\in\Ho$, the first term is defined by
\begin{equation}
    (\divg(u\gradg\zeta_S),v) := -\int_\Gamma u\gradg \zeta_S \cdot\gradg v = \sum_i \int_{\Gamma_i} \nabla_{\Gamma_i}\cdot(u\nabla_{\Gamma_i}  (\zeta_S )) v + \sum_j \int_{e_{ij}} \bs b_i \cdot \nabla_{\Gamma} (\zeta_S) uv,
\end{equation}
where we have used integration by parts on each face. The edge integrals cancel
because the trace of~$u$ and~$v$ along~$e_{ij}$ agrees from~$\Gamma_i$ and~$\Gamma_j$, since they are in~$\Ho$, and~$\zeta_S$ satisfies the interface conditions by construction. Applying the product rule on each face gives that
\begin{equation}
    (\divg(u\gradg\zeta_S),v) = \sum_i \int_{\Gamma_i} \lp\nabla_{\Gamma_i}u\cdot\nabla_{\Gamma_i}\zeta_S + u \Delta_{\Gamma_i}\zeta_S \rp v.
\end{equation}
Since~$\zeta_S$ is smooth on each~$\Gamma_i$ and~$u\in\Ho$, the function in brackets is in~$L^2(\Gamma)$. The density of $\Ho$ in $L^2(\Gamma)$ then gives that $\divg(u\gradg\zeta_S)\in L^2(\Gamma)$. 

For any~$v\in\Ho$, the second term in~\eqref{eq:ftilde_exp} is
\begin{equation}
     (\divg(\zeta_S\gradg u),v) := -\int_\Gamma \zeta_S\gradg u \cdot\gradg v = -\int_\Gamma \gradg u \cdot \lp \zeta_S\gradg v\rp. 
\end{equation}
Applying the product rule and using the fact that~$\zeta_S$ is Lipschitz continuous gives that
\begin{equation}
     (\divg(\zeta_S\gradg u),v)=-\int_\Gamma \gradg u \cdot \lp \gradg (\zeta_S v) -v\gradg \zeta_S\rp=(\divg(\gradg u),\zeta_Sv)+\int_\Gamma \gradg\zeta_S\cdot \gradg uv. 
\end{equation}
The density of $\Ho$ in $L^2(\Gamma)$ then gives that~$(\divg(\zeta_S\gradg u)\in L^2(\Gamma)$. Overall, we now have that~$\tilde f_S\in L^2(\Gamma)$.

% Next, we note that the piecewise strong Laplace-Beltrami operator applied to $\zeta_s$, $\Delta_{\Gamma,\operatorname{strong}}\zeta_S$, is in $L^\infty(\Gamma)$ by construction. To see that it equals $\LB\zeta_S$, we suppose that $g\in\Ho$ and compute
% \begin{equation*}
%     \int_\Gamma \Delta_{\Gamma,\operatorname{strong}}\zeta_S g =-\int_\Gamma \nabla_{\Gamma,\operatorname{strong}} \zeta_S \cdot \gradg g + \sum_{ij} \int_{e_{ij}} \bs b_i \cdot \nabla_{\Gamma,\operatorname{strong}} \zeta_S g = -\int_\Gamma \gradg \zeta_S \cdot \gradg g,
% \end{equation*}
% where the last equality comes from the fact that $\zeta_s$ satisfies the interface conditions and that $g\in \Ho$. Since the right-hand side is the definition of the weak Laplace-Beltrami operator applied to $\zeta_S$, we thus have that $( \Delta_{\Gamma,\operatorname{strong}}\zeta_S, g)_{L^2(\Gamma),L^2(\Gamma)}=( \LB\zeta_S, g)_{\Hm,\Ho}$ for all $g\in H^1(\Gamma)$. The density of $\Ho$ in $L^2(\Gamma)$ then gives that $\LB \zeta_S=\Delta_{\Gamma,\operatorname{strong}}\zeta_S\in L^2(\Gamma)$. 
We pause here to note that if $\zeta_S$ had not satisfied the interface conditions, then~$\tilde f_S$ would have involved a distribution supported on the surface edges and not have been in~$L^2(\Gamma)$.

% With these observations, the assumptions that $u\in \Ho$ and $\LB u = f\in \mathcal{H}^k(\Gamma)$ give that~$\tilde f_S\in L^2(\Gamma)$. 
Since~$\zeta_S u\in H^1_0(\Gamma_S)$, the function $\zeta_Su$ is a solution of the weak form of the Laplace-Beltrami problem on~$\Gamma_S$ with homogeneous Dirichlet boundary conditions and right hand side~$\tilde f_S$. Since~$\tilde f_S\in L^2(\Gamma)$, Theorem~\ref{thm:single_patch_result} thus gives that~$\zeta_S u$ satisfies the interface conditions on the interior of~$\Gamma_S$. Since~$\zeta_S$ is zero in a neighborhood of~$\Gamma\setminus\Gamma_S$, we know that~$\zeta_S u$ satisfies the interface conditions at all the other edges of~$\Gamma$. The linearity of our problem then tells us that~$u$ satisfies~\eqref{eq:interface_form}.

For the second part of the theorem, we note that~$\zeta_S$ is flat near $S$ and
zero in a neighborhood every other vertex. The expansion of~$\zeta_Su$ given by
\eqref{eq:local_corner} therefore gives the expansion of~$u$ near~$S$. The
global form of~$u$ in~\eqref{eq:expandu} can then be found using the argument in
Corollary~\ref{cor:cornerstodomain} with~$\{\zeta_S\}$ as the partition of
unity.

If $f\in H^1(\Gamma)$, then we continue to show that the coefficients $a_{n,s}$ can be chosen so that $u_0$ is in $\mathcal{H}^3(\Gamma)$. Since~$\zeta_S$ must be flat near the vertices of~$\Gamma$,~\eqref{eq:ftilde} implies that that~$\tilde f_S\in \Ho$. With this information, the above proof then implies that~$u_0$ can be chosen to be in $\mathcal{H}^3(\Gamma)$. If~$f\in\Htg$, we may repeat this argument to see that $u_0$ can be chosen to be in $\mathcal{H}^4(\Gamma)$. Having exhausted the relevant cases, we have completed the proof.
\end{proof}

Morrey's
inequality (see Theorem 12.55 in~\cite{Leoni2017}) indicates that if one could prove a version of the above
for~$f\in L^p(\Gamma)$ for some~$p>2$, then we could ask for~$u$ to be
the solution of the following smoother problem:
 \begin{problem}[Strong interface form of the Laplace-Beltrami problem]\label{def:strong_form}
Let~$\Gamma$ be a surface composed of smooth faces~$\Gamma_i$ and $f$ be a continuous mean-zero function on $\Gamma$.  The
strong interface form of the Laplace-Beltrami problem is
defined to be: find a~$u\in C^1(\Gamma_i)\cap H^2(\Gamma_i)$ for every~$i$, which satisfies~\eqref{eq:interface_form} where the restrictions to surface
edges are interpreted in the limit sense, rather than a trace
sense.
\end{problem}

We present the following conjecture about conditions for solvability
of the strong interface form. The proof would follow from an~$L^p$
version of Theorem~\ref{thm:Nicaise} in the same way presented above
and an application of Morrey's inquality.
% Conjecture 1:  
\begin{conjecture}\label{cnj:lpresults}
If~$\Gamma$ is a piecewise smooth Lipschitz surface composed of faces~$\Gamma_i$ with all conic angles less than or equal to~$2\pi$
and~$f$ is a function in~$L^p(\Gamma)$ for some~$p>2$, then the solution of the weak form~\eqref{eq:weak_form} is also a solution of the strong interface form of the Laplace-Beltrami problem. 
\end{conjecture}

The results of this section imply that numerical solvers for the
Laplace-Beltrami problem on piecewise smooth surfaces can discretize
the differential operator acting on functions in~$L^2$ and obtain
convergent results. We demonstrate this in Section~\ref{sec:ode} via a
high-order numerical solver along surfaces of revolution. Before doing that however, we briefly discuss how the above result can be extended to cones.

\section{A special case: The cone}
\label{sec:cone}

Definition~\ref{def:piecewise_smth} excluded surfaces with cone-like
singularities. This exclusion allowed us to simplify our arguments, but many applications involve surfaces with such singularities. In this
section, we briefly address the Laplace-Beltrami problem on a simple cone of
height~$h$ with a base of radius one. 

We begin by analytically solving the Laplace-Beltrami
problem on this surface, showing that the expected singularities appear, and
noting that the solution has the expected smoothness.

\begin{example} \label{exam:cone}
Consider the case where $\Gamma$ is a cone of
height~$h$ with a base of radius one. We let~$\Gamma_1$ be the curved part 
of the cone and let $\Gamma_2$ be the flat bottom of the cone. These pieces can
be parameterized by
\begin{equation}
  \begin{aligned}
    \bs x_1 &=  \lp \frac{h-z}h\cos(\phi),
      \frac{h-z}h\sin(\phi), z \rp, &\qquad &z \in [0,h], &\quad &\phi \in
      [0,2\pi), \\
    \bs x_2 &=\lp r\cos(\phi), r\sin(\phi), 0\rp, & &r \in [0,1], &
    &\phi \in [0,2\pi),
  \end{aligned}
\end{equation}
respectively. Using these parameterizations~$\bs x_1$ and~$\bs x_2$,
Mathematica~\cite{Mathematica} can analytically solve the Laplace-Beltrami
problem with piecewise data~$f_n$ given by:
\begin{equation}
  \begin{aligned}
    f_n|_{\Gamma_1} &= (h-z)^\alpha \, e^{in\phi}, \qquad \alpha > -1,
    \quad n  \in \mathbb{N}, \\
    f_n|_{\Gamma_2} &= 0.
  \end{aligned}
\end{equation}
The analytic solution is given by
\begin{equation}
  \begin{aligned}
    u_n|_{\Gamma_1} &= \lp c_1 \, (h-z)^{2+\alpha} + c_2 \,
    (h-z)^{\sqrt{1+h^2} |n|}\rp
    e^{in\phi}, \\
    u_n|_{\Gamma_2} &= c_3\, r^{|n|} \, e^{in\phi},
  \end{aligned}
\end{equation}
where $c_1$, $c_2$, and $c_3$ are constants that depend on $h$, $n$, and
$\alpha$. We can clearly see that in this case the solution has two more
integrable derivatives than~$f_n$ and picks up terms whose higher order
derivatives are singular near the corner. If we note that the conic angle for
this cone is given by $\gamma={2\pi} / \sqrt{1+h^2}$, then it becomes even
clearer that the solution has the expected behavior near the top tip and the
bottom edge.
\end{example}

Having seen that the solution of the Laplace-Beltrami problem has the expected behaviour on~$\Gamma$, we
now prove that Theorem~\ref{thm:strongLB} can be applied to this
specific~$\Gamma$. Since the cone is piecewise smooth away from its natural
vertex~$S=(0,0,h)$, it is enough to show that Lemma~\ref{thm:interfaceholds}
holds on a patch containing the vertex $S$. The main requirement for this is
included in the following lemma.
\begin{lemma}
    There exists a parameterization~$\bx_S$ of a patch~$\Gamma_S$ around the vertex~$S$ such that the pull-back of the Laplace-Beltrami operator has piecewise constant coefficients. 
\end{lemma}
\begin{proof}
    We begin by parameterizing the curved section of the cone~$\Gamma_1$ as
\begin{equation}
    \tilde{\bs  x}_1(x,y) =\begin{pmatrix}\sqrt{\frac{x^2+y^2}{1+h^2}}\cos\left(\sqrt{1+h^2}\tan^{-1}\left(\frac yx\right) \right)\\
    \sqrt{\frac{x^2+y^2}{1+h^2}}\sin\left(\sqrt{1+h^2}\tan^{-1}\left(\frac yx\right) \right)\\
    h\sqrt{\frac{x^2+y^2}{1+h^2}}\end{pmatrix}, \qquad \text{for } (x,y)\in W
\end{equation}
where the wedge $W$ is
\begin{equation}
    W=\left\{(x,y)\in \mathbb{R}^2\;\middle|\;\sqrt{x^2+y^2}\leq \sqrt{1+h^2}\text{ and }  \tan^{-1}\left(\frac yx\right)\in \left[0,\frac{2\pi}{\sqrt{1+h^2}}\right)\right\}.
\end{equation}
Some calculus shows that under this parameterization, the pull-back of the
Laplace-Beltrami operator is simply the Laplacian. To construct the patch~$\Gamma_S$, we let~$T_1,T_2$, and~$T_3$ be the three triangular subsets of~$W$ shown in~Figure~\ref{fig:cone_param}. For the purposes of this proof, we consider their images~$\Gamma_i=\tilde{\bx}_1(T_i)$ to be faces of~$\Gamma$. The patch around the vertex is then defined as~$\Gamma_S:=\cup_{i=1}^3\Gamma_i$.

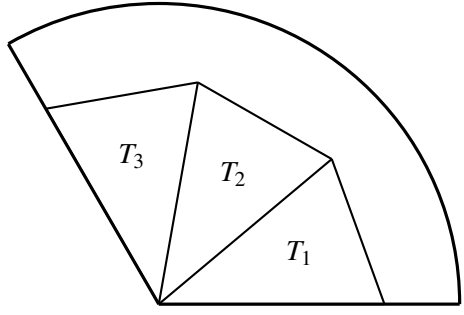
\begin{figure}
    \centering
    \begin{tikzpicture}
        \coordinate (0) at (0,0);
        \coordinate (A) at (3,0);
        \coordinate (B) at (2.30,1.93);
        \coordinate (C) at (0.52,2.95);
        \coordinate (D) at (-1.5,2.60);
        
        \coordinate (A') at (4,0);
        \coordinate (D') at (-2,3.46);

        \draw[black,thick] (0) -- (A);
        \draw[black,thick] (0) -- (B);
        \draw[black,thick] (0) -- (C);
        \draw[black,thick] (0) -- (D);
        \draw[black,thick] (A) -- (B);
        \draw[black,thick] (B) -- (C);
        \draw[black,thick] (C) -- (D);

        \draw[black,very thick] (0) -- (A');
        \draw[black,very thick] (0) -- (D');

        \draw[black,very thick] (A') arc (0:120:4);
        
        \node(label) at (1.88,0.68) {$T_1$};
        \node(label) at (1,1.73) {$T_2$};
        \node(label) at (-0.35,1.97) {$T_3$};
    \end{tikzpicture}
    \caption{This figure shows the triangular regions~$T_1,T_2,$ and~$T_3$ whose images under $\tilde{\boldsymbol{x}}_1$ form~$\Gamma_S$.}
    \label{fig:cone_param}
\end{figure}

The construction of~$\bx_S$ is then the same as in the proof of Theorem~\ref{thm:acceptable_param}, except that the initial parameterizations 
already preserve arc-length along the overlapping edges and so 
Lemma~\ref{lem:arc-length} is not necessary. The required affine transformations 
make the pull-back of the Laplace-Beltrami operator a constant coefficient 
operator on each face, and so we have found the desired parameterizations.
\end{proof}

A similar argument to that in Lemma~\ref{lem:corners} could be applied on~$W$ to
find that the expansion powers are still~$\frac{2\pi n}{\gamma_S}$. The rest of
the proof of Lemma~\ref{thm:interfaceholds}, and therefore
Theorem~\ref{thm:strongLB}, is the same. Our main result thus holds for this cone.

It is likely that our result also holds for Laplace-Beltrami problems along more
generic surfaces with cone-like singularities. We therefore present the following conjecture.

\begin{conjecture}
    Let~$\Gamma$ be a Lipschitz surface than can be written as the union of faces~$\left\{\Gamma_i\right\}_{i=1}^N$. Also suppose that the interiors of these faces are pairwise disjoint. Finally suppose that for each face~$\Gamma_i$ there is a closed triangle~$T_i$ and a bijective parameterization~$\bx_i:T_i\to \Gamma_i$. If for each~$i$, the parameterization~$\bx_i$ is smooth away from the vertices of~$T_i$ and can be written smoothly in polar coordinates around each verte, then the weak solution of the Laplace-Beltrami problem on~$\Gamma$ satisfies the interface form whenever the right hand side is in~$L^2(\Gamma)$.
\end{conjecture}

\section{Application: Surfaces of revolution with edges}
\label{sec:ode}
Having identified the correct form of the Laplace-Beltrami problem on a
piecewise smooth Lipschitz surface~$\Gamma$, we would like to develop numerical
methods to solve it. In general, this will be challenging, as care would have to
be taken to deal with the singularities in the solution near surface vertices.
In this section, we shall restrict ourselves to the case that~$\Gamma$ is a genus one
surface of revolution (e.g. the surface in Figure~\ref{fig:Harmonic_fields}), which cannot have vertices. In this case, we will able to
use a simple numerical method to solve the interface form of the
Laplace-Beltrami problem. Our method will be to use separation of variables to
formulate the Laplace-Beltrami problem as a sequence of decoupled periodic ODEs,
where the interface conditions become continuity conditions on the the solution
of the ODEs. We then solve those ODEs using an integral equation approach
which will automatically satisfy those continuity conditions.

\subsection{Separation of variables}

To begin with, denote by~$(r,\theta,z)$ the usual cylindrical coordinate
system in three dimensions. 
If~$\Gamma$ is a genus one piecewise smooth surface of revolution about
the~$z$-axis, then let~$\gamma$ denote its \emph{generating
curve} in the plane $\theta = 0$, which is assumed to be closed, piecewise smooth, and non-self intersecting.
The generating curve can be parameterized in
terms of arclength,~$\gamma : [0,L] \to \mathbb R^3$;
let its cylindrical coordinate components (for $\theta = 0$) be parameterized as
\begin{equation}
  \gamma(s) = (r(s), z(s)),
\end{equation}
where~$s \in [0,L]$ denotes arclength along the generating curve~$\gamma$.
As shown in the previous section, since the 
Laplace-Beltrami problem $\surflap u = f$ is uniquely solvable on the
space of mean-zero square-integrable functions on a piecewise smooth
surface, we can write the solution~$u$ in a Fourier expansion in
cylindrical coordinates as:
\begin{equation}
  \label{eq:fourieru}
  \begin{aligned}
    u(\bx) &= u(r,\theta,z) = u(s,\theta) \\
    &= \sum_{n=-\infty}^{\infty} u_n(s) \, e^{in\theta},
  \end{aligned}
\end{equation}
and the right hand side $f$ as:
\begin{equation}
  \label{eq:fourierf}
    f(\bx) = \sum_{n=-\infty}^{\infty} f_n(s) \, e^{in\theta}.
\end{equation}

Since a surface of revolution can only have edges at corners in the generating curve, the decomposition in~\eqref{eq:fourieru} implies that the interface
conditions become the requirement that $u_n$ and $u_n'$ are continuous at
values of $s$ that correspond the surface edges. 

Furthermore, we note that in the variables~$s$ and~$\theta$, the
Laplace-Beltrami operator takes the form:
\begin{equation}
  \label{eq:F_LB_ODEs}
\surflap =   \frac{\partial^2}{\partial s^2 } + \frac{1}{r} \frac{\partial
    r}{\partial s} \frac{\partial}{\partial s} + \frac{1}{r^2}
  \frac{\partial^2}{\partial \theta^2}.
\end{equation}
Using the decompositions in~\eqref{eq:fourieru}
and~\eqref{eq:fourierf},
and the above form of
the Laplace-Beltrami operator, we can transform the PDE into a
sequence of decoupled periodic ODEs, one for each Fourier mode~$n$. Specifically, the Fourier modes $u_n$ should be continuously differentiable, $L$-periodic, and such that
\begin{equation}
  \label{eq:odes}
  \frac{d^2 u_n}{d s^2 } + \frac{1}{r} \frac{d
    r}{d s} \frac{d u_n}{d s} - \frac{n^2}{r^2}
  u_n
  = f_n, \qquad \text{for } s \in [0,L].
\end{equation}

The solution $u$ can then be easily synthesized via its Fourier ansatz. Note
that the mean-zero condition on~$u$ for solvability of the Laplace-Beltrami
problem reduces to a condition on only~$u_0$ since every mode with~$n\neq 0$
integrates to zero:
\begin{equation}
  \int_\Gamma u = \int_0^{2\pi} \int_0^L \sum_n u_n(s) \, e^{in\theta}
  \, r(s) \, ds \, d\theta  = 2\pi \int_0^L u_0(s) 
  \, r(s) \, ds.
\end{equation}
Enforcing this mean-zero condition on~$u_0$ 
is discussed in the next section.

\subsection{A periodic ODE solver}

The separation of variables solution to the Laplace-Beltrami problem
requires solving the sequence of periodic ODEs in~\eqref{eq:odes} with
the usual periodic boundary condition: continuity in the solution and
its derivative~\cite{coddington1984,trefethen2018,eastham1973}.  In
order to solve the ODE for each Fourier mode of the Laplace-Beltrami
equation, we shall convert it into a second-kind integral
equation. Doing this will allow us to easily use adaptive
high-order quadrature methods to solve it accurately.  This conversion
is applicable to a broad class of periodic ODEs, so we shall present
the procedure in a general framework. A similar approach for the
Laplace-Beltrami problem on smooth surfaces of revolution was used
in~\cite{Epstein2019} (using a global trapezoidal discretization
scheme);  a more general adaptive approach for two-point boundary
value problems, coupled with a fast
direct solver, was detailed in the widely known work
of~\cite{lee1997}. To this end, we shall consider a method to solve
any ODE on $[0,L]$ of the form:
\begin{equation}
  \label{eq:ODE}
  u'' + pu'+qu = f, \qquad 
  u(x+L)=u(x), \qquad u'(x+L)=u'(x),
\end{equation}
where $f$, $p$, and $q$ are known periodic functions in $\Lr$ for some
$r>1$ and we are searching for a solution $u\in C^1(\Iper)$.  The boundary conditions above give that solution is $L$-periodic.
 If $q = 0$, then the solution can only be
determined up to an additive constant; in this case, an additional
constraint must be imposed to ensure well-posedness of the problem.
Usually this constraint takes the form of a linear function of~$u$,
such as
\begin{equation}
  \label{eq:constraint}
  \int_0^{L} u(x)\, w(x) \, dx=A.
\end{equation}
We address this special case where~$q= 0$ later on in this section.

In order to convert the ODE~\eqref{eq:ODE} on~$[0,L]$ into an integral
equation on the same interval,
first consider the kernel~$\GL$,
\begin{equation}
  \GL(x) = -\frac{1}{2L} \left(\text{mod}(x,L)-\frac{L}2\right)^2
  +\frac{L}{24}.
\end{equation}
It is not hard to verify that if this kernel is convolved with a
function $f$ that is mean-zero on~$[0,L]$, then the resulting function
$v=\GL* f$ solves the one-dimensional periodic Poisson equation
$v''=f$. This $v$ is in fact the unique mean-zero solution
with~$\int_0^L v = 0$, since $\GL$ is also mean-zero on this interval.
We next define the ``single layer operator''~$\SL$ via the convolution
\begin{equation}
  \SL f (x) = \int_0^L \GL(x-t) \, f(t) \, dt.
\end{equation}
Since~$\GL$ is mean-zero on~$[0,L]$, $\SL f$ is also mean-zero on the
same interval.  We now represent
the solution~$u$ to~\eqref{eq:ODE} as
\begin{equation}
  \label{eq:rep}
  u=\SL\sigma+C,
\end{equation}
for some unknown density~$\sigma$ with~$\int_0^L \sigma = 0$ and some unknown
constant~$C$ which, by construction, gives
\begin{equation}
  \int_0^L u = \int_0^L \SL \sigma + \int_0^L C  \quad \implies
  \quad C = \frac{1}{L} \int_0^L u,
\end{equation}
i.e. $C$ is the mean of the solution~$u$ on~$[0,L]$.
 We have included the constant~$C$ in the representation
in order to ensure that it is a complete representation; $\SL\sigma$
will always be a mean-zero function on~$[0,L]$, but the solution~$u$ may not be.
This representation
for $u$ also ensures that the solution is automatically periodic
since~$\GL$ is periodic.

With this representation, we have changed the problem of finding $u$
into the problem of finding~$\sigma$ and~$C$.
Inserting~\eqref{eq:rep} in~\eqref{eq:ODE} yields a Fredholm
second-kind integral equation for~$\sigma$ and $C$:
\begin{equation}
  \label{eq:new-int-eq}
  \sigma + p\SL'\sigma + q \SL\sigma+ q \, C =f,
\end{equation}
where
\begin{equation}
  \SL'\sigma(x)=\int_0^{L}\GL'(x-t) \, \sigma(t) \, dt.
\end{equation}
The above integral equation is indeed invertible when $q\neq 0$
because the underlying ODE is invertible and we simply used a
complete and unique representation for the solution~$u$. In order to
see that our transformation has led to a well-conditioned equation, we
note that~\eqref{eq:new-int-eq} has the form of
$(\mathcal I+\mathcal K)\sigma+qC=f$, where~$\mathcal I$ is the
identity operator and~$\mathcal K$ is a compact integral operator.
The integral equation is therefore of Fredholm
second-kind~\cite{Kress1983}.

If $q = 0$, then we must explicitly enforce the additional integral
condition~$\int_0^{L} u \, w =A$ from~\eqref{eq:constraint}. We shall
include this condition by simply adding it to~\eqref{eq:new-int-eq},
giving the integral equation
\begin{equation}
  \label{eq:new-int-eq2}
  \sigma + p\SL'\sigma + \int_0^{L}(\SL\sigma+C) \, w =f + A.
\end{equation}
This method of adding a linear constraint to an integral equation is
equivalent to adding a rank-one update to the original integral
operator. It is not difficult to see that this update results in an
invertible and well-conditioned equation provided the range of the
update is not contained in the range of the original integral
operator.  See~\cite{Sifuentes2015} for a discussion of this method in
the matrix equation setting. In our case, this is equivalent to asking
if there exists a mean-zero function on~$[0,L]$ that is not in the
null space of the adjoint, i.e. if there exists a mean-zero and
periodic $f$ such that $f+\SL' [pf]\neq 0$. Taking a derivative of
this expressions gives the condition that $f'+pf\neq 0$. Plugging in $f=\sin (\frac{2\pi}L x)$ or $f=\cos (\frac{2\pi}L x)$
will yield at least one example.

Next, we check that the solution of~\eqref{eq:new-int-eq} produces a~$u$
that is automatically in~$C^1(\Iper)$. To do this, we first note that
for~\eqref{eq:new-int-eq} to make sense, the solution~$\sigma$ must be
in~$L^1(\Iper)$. Applying Young's inequality then tells us that
$\SL\sigma$ and $\SL'\sigma$ are uniformly bounded, and
therefore~\eqref{eq:new-int-eq} gives that~$\sigma$ is in $\Lr$.
Finally, we note that
$\SL$ maps $\Lr\to C^1(\Iper)$. This can easily be proven using
H\"older's inequality and the fact that $\GL$ and $\GL'$ are
piecewise continuous and bounded. Knowing that $\sigma \in \Lr$ then
gives that $\SL\sigma$, and thus the solution $u$, are in
$C^1(\Iper)$. An analogous argument holds true in the~$q=0$ case as
well.

We now have a suitable second-kind integral equation form
of~\eqref{eq:ODE} and its solution will have
the expected smoothness properties. In the Laplace-Beltrami case, we have that
\begin{equation}
  p = \frac{1}{r} \frac{d r}{ds}, \qquad q = -\frac{n^2}{r^2},
\end{equation}
and when~$n = 0$ we enforce the additional constraint on~$u_0$ of
\begin{equation}
  \int_0^L u_0(s) \, r(s) \, ds = 0,
\end{equation}
which, as mentioned before, is equivalent to the mean-zero
constraint~$\int_\Gamma u = 0$~\cite{Epstein2019}. Since $r$ is a piecewise smooth function bounded away from 0, $p$ and $q$ will be in $L^r(\Iper)$ for any $r\geq 1$. 
Lastly, in light of the earlier discussion in the manuscript, the case of interest where~$f \in L^2[0,L]$ satisfies the earlier requirements.

\begin{remark}\label{rmk:Yukawa}
  The above integral equation formulation of~\eqref{eq:ODE} is not
  the only possible integral equation formulation.  The above
  derivation could easily be repeated with other kernels (i.e. Green's
  functions). One example can be found in~\cite{Epstein2019}, where they
  considered the function
  \begin{equation}
    \GY(x) = -\half e^{-|\tilde
      x|}-\frac{e^{-L}}{1-e^{-L}}\cosh(\tilde x), \qquad\text{where }
    \tilde x = \mod\left(x-\frac L2,L\right)+\frac L2,
\end{equation}
instead of the function~$\GL$ used above.
This function is the $L$-periodic Green's function for the Yukawa
problem $v''-v=f$. Since $\GY$ has the same smoothness properties as
$\GL$, the resulting integral equation will still be second-kind and
give rise to a unique solution~$u$ that is periodic and
in~$C^1(\Iper)$
whenever $p$, $q$, and $f$ are in~$L^r(\Iper)$ for some
$r>1$.

We also note that $\GY$ has a minor advantage over $\GL$ because it
has a non-zero mean. Therefore, there is no need to ensure that
$\sigma$ is a priori mean-zero, and we would not need to include the
constant~$C$ in the representation (but the mean-zero
constraint~\eqref{eq:constraint} would still need to be enforced in
the case that $q=0$). In practice, these advantages are small and we
will see later that both kernels ultimately lead to very similar
performance.
\end{remark}

\subsection{A numerical solver}
\label{sec:solver}

In this section, we shall describe a numerical solver for the integral
equations in~\eqref{eq:new-int-eq}. To summarize, our solver is
based on a Nystr\"om method for the equation using an adaptive
discretization of the interval $[0,L]$ consisting of a piecewise
16th-order Gauss-Legendre quadrature. (A 16th-order quadrature was
chosen so as to ensure a high-order discretization, of course our
method extends to any other order discretization.) An overview of
various methods for discretizing integral equations along curves in
two dimensions is given in~\cite{Hao2014}. For problems in which the
surface~$\Gamma$ is smooth (and therefore so is the generating curve
$\gamma$), we partition the interval~$[0,L]$ into uniformly sized
panels. For piecewise smooth generating curves~$\gamma$, we dyadically
refine the panels using knowledge of where the geometric singularities
occur as a function of arclength.

Let $\{P_i\}_{i=1}^M$ be a partition of the interval $[0,L]$, and
$\{x_{ij},w_{ij}\}_{j=1}^{16}$, be the 16th-order Gauss-Legendre quadrature nodes and
weights on panel~$i$.
The integral equation in~\eqref{eq:new-int-eq} is enforced at each of
the nodes $x_{ij}$:
\begin{multline}
  \label{eq:disc1}
  \sigma_{ij} + \sum_{k = 1}^M \lp p(x_{ij})  \int_{P_k} \GL'(x_{ij} -
  t) \, \sigma(t) \, dt + q(x_{ij}) \int_{P_k} \GL(x_{ij} -
  t) \, \sigma(t) \, dt \rp \\ + q(x_{ij}) \, C = f(x_{ij}),
\end{multline}
where~$\sigma_{ij}$ denotes the solution to the linear system which
will be approximately equal to~$\sigma(x_{ij})$.
We will show how to enforce the a priori mean-zero condition on
the~$\sigma_{ij}$'s below.
It remains now to replace the integrals above with discrete sums. 

First, notice that the kernel~$\GL$ is piecewise smooth with a
discontinuity in~$\GL'$ only at the origin. With this in mind, the
integrals in~\eqref{eq:disc1} corresponding to~$k \neq i$ can be
approximated using 16th-order Gauss-Legendre quadrature nodes, for
example:
\begin{equation}
  \label{eq:disc2}
  \int_{P_k} \GL(x_{ij} -   t) \, \sigma(t) \, dt \approx
  \sum_{\ell=1}^{16} w_{k\ell} \, \GL(x_{ij} -   x_{k\ell}) \,
  \sigma_{k\ell}, \qquad \text{for } k \neq i.
\end{equation}
For the ``near field'' integrals corresponding to when $k = i$
in~\eqref{eq:disc1}, standard Gauss-Legendre quadrature will fail to
yield high-order convergence due to the irregularity in the
kernel~$\GL$. With this in mind, we split the $k=i$ integrals into two
pieces precisely at the point of irregularity,~$x_{ij}$. On each of
these new panels, the integrand is smooth and standard Gauss-Legendre
quadrature can be used along with Lagrange interpolation
on~$\sigma$ to obtain implied values at the extra quadrature support
nodes.

To explain in more detail:
if $P_i = [a_i,b_i]$, then for each~$j$, we approximate this near field integral
over~$P_i$ as
\begin{equation}
  \label{eq:disc3}
  \begin{aligned}
  \int_{P_i} \GL(x_{ij} -   t) \, \sigma(t) \, dt &=
  \int_{a_i}^{x_{ij}} \GL(x_{ij} -   t) \, \sigma(t) \, dt +
  \int_{x_{ij}}^{b_i} \GL(x_{ij} -   t) \, \sigma(t) \, dt  \\
  &\approx \sum_{\ell = 1}^{16} u_{ij\ell} \,
  \GL(x_{ij} -   s_{ij\ell}) \, \tilde\sigma(s_{ij\ell})
  + \sum_{\ell = 1}^{16} v_{ij\ell} \, \GL(x_{ij} -   t_{ij\ell}) \,
  \tilde\sigma(t_{ij\ell}),
\end{aligned}
\end{equation}
where~$(s_{ij\ell},u_{ij\ell})$ is the $\ell$th Gaussian quadrature
node and weight pair on the interval~$[a_i,x_{ij}]$,
$(t_{ij\ell},v_{ij\ell})$ is the $\ell$th Gaussian quadrature node and
weight pair on the interval~$[x_{ij},b_i]$, and~$\tilde\sigma(x)$ is
the value obtained from the~$\sigma_{ij}$'s on~$P_i$ via Lagrange
interpolation to the point~$x \in P_i$. The near field integrals
in~$\SL'$ can be discretized similarly.

Finally, recall our representation for the solution~$u$: $u =
\SL\sigma + C$, where~$\int_0^L \sigma = 0$.
This assumption on~$\sigma$ must be explicitly enforced, and can
easily be done so in one of two ways:  (1) the condition can be discretized
as
\begin{equation}
  \int_0^L \sigma(s) \, ds = 0 \approx \sum_{i=1}^M \sum_{j=1}^{16} w_{ij}
  \, \sigma_{ij}
\end{equation}
and appended to the system of equations yielding a square linear
system of dimension~$16M + 1$ for the unknowns~$\sigma_{ij}$ and~$C$,
or (2) the original representation can be replaced with one of the
form
\begin{equation}
  u = \SL\left[ \sigma - \int_0^L \sigma \right] + \int_0^L \sigma.
\end{equation}
The above alternative representation ensures that the argument
to~$\SL$ has mean-zero on~$[0,L]$ and~$C$ is equated with the
integral of~$\sigma$. This approach results in a square linear system
of size~$16M$ for only the unknowns~$\sigma_{ij}$.
In our the subsequent numerical experiments, our solver implements the
latter choice of changing the representation to equate the integral
of~$\sigma$ with the integral of the solution~$u$.

Lastly, in the purely azimuthal component to the Laplace-Beltrami
problem, i.e. the $n=0$ mode, we must also enforce the mean-zero
condition on the solution~$u_0$ along the surface~$\Gamma$. A
discretization of this condition in~\eqref{eq:constraint} can be
obtained by an identical procedure as to that used in
discretizing~$\SL \sigma$; the resulting equation can be put in the
form
\begin{equation}
  \sum_{i = 1}^M \sum_{j=1}^{16} c_{ij} \, \sigma_{ij} = 0
\end{equation}
and added to each row of the~$16M \times 16M$ linear system.

\begin{remark}
The continuous Laplace-Beltrami equations are well-conditioned with
respect to the $L^2(\Iper)$ norm, $\|\cdot\|_{L^2(\Iper)}$. In order
to ensure that our discretized equations are similarly
well-conditioned in~$\ell^2$ (as an embedding of the continuous
problem), we must use a discretization method that that approximates
$\|\cdot\|_{L^2(\Iper)}$~\cite{Bremer2012}. This is especially
important when we use local adaptive refinement, as that will cause
the $\ell^2$ norm of the discretized functions to greatly diverge from
their true~${L^2(\Iper)}$ norms. We address this issue by
replacing~$\sigma_{ij}$ in the discretized linear system
with~$\sqrt{w_{ij}} \sigma_{ij}$. An equivalent linear system is
easily obtained by left and right diagonal preconditioners~(as
detailed in~\cite{Bremer2012}) with the effect that the~$\ell^2$ norm
of the discrete unknown approximates the true~$L^2$ norm of the
solution to the continuous problem:
\begin{equation}
  \left\Vert \{ \sqrt{w_{ij}} \sigma_{ij}  \} \right\Vert^2 = \sum_{ij} w_{ij}
  \sigma_{ij}^2 \approx \int_0^L \sigma^2(s) \, ds.
\end{equation}
Furthermore, the resulting linear system has a spectrum which
converges to the spectrum of the original continuous system, yielding
increased performance when using iterative solvers such as GMRES.
\end{remark}

\section{Numerical examples}
\label{sec:numerical}

In this section, we give the results of some numerical examples
demonstrating the ODE and Laplace-Beltrami solvers detailed above. In
order to compute the right hand sides~$f_n$ in
equation~\eqref{eq:odes} we discretize the original function~$f$ using
equispaced points in the azimuthal and compute~$f_n$ using the FFT (as
in~\cite{Epstein2019}). In all of the tests, the linear systems were
solved using GMRES to a relative tolerance of~$10^{-14}$. The linear
systems were of modest size, and could easily be applied in a
matrix-free fashion. All codes were written in MATLAB and no attempt
was made to accelerate the code beyond the use of the FFT.

All reported errors were estimated in the relative~$L^2$ sense,
e.g. the relative error in the solution~$u$ to an ODE was measured as
\begin{equation}
  \epsilon = \sqrt{ \frac{\sum_i w_i \, (\utrue(x_{i}) - u_i)^2 } 
           {\sum_i w_i \, \utrue^2(x_{i})  } },
\end{equation}
where we have used a single subscript~$i$ above to index
all~$16M$ values on the interval~$[0,L]$ and~$\utrue$ was either known a priori or estimated using a finer
discretization or a different numerical solver. For Laplace-Beltrami
problems on a surface~$\Gamma$, the formula above is modified to
include the proper quadrature weight for a surface integral (obtained
as a tensor product of piecewise Gauss-Legendre quadrature with the
trapezoidal rule).

Lastly, in what follows, we will use~$N_s = 16M$ to denote the total
number of discretization points on an interval~$[0,L]$ and~$N_\theta$
to denote the total number of points used in the azimuthal direction
for Fourier analysis/synthesis via the FFT.

\subsection{Comparing Greens functions}
\label{sec:Comp_fcts}

In order to compare the Green's function~$\GL$ discussed in
Section~\ref{sec:solver} with the Green's function~$\GY$ used
in~\cite{Epstein2019}, we test our ODE solver on a globally smooth
problem. We set the coefficient functions in the ODE to
\begin{equation}
  p(x)=\sin(3x)-2, \qquad  q(x)=2\sin(5x)-3
\end{equation}
and the right hand side to
\begin{equation}
  f(x)=\frac{d^2}{dx^2} e^{\sin(2x)}.
\end{equation}
These functions are smooth, periodic, and the
resulting ODE is not trivialized by a simple change of
coordinates. Since these functions are smooth, we use uniform panels
to discretize the interval.

As the exact solution for this equation is not known, we compare the
resulting solutions to the solution obtained with~\chebfun, a well-known
MATLAB package that uses global Chebyshev approximation to solve ODEs with
spectral accuracy~\cite{Platte2010}.
As a further test, we repeated this experiment with a Chebyshev-based
discretization (as opposed to the Gauss-Legendre one described
earlier).
In this test, second-kind Clenshaw-Curtis quadratures were used 
to compute the required integrals.

\begin{figure}[t]
    \centering
    \includegraphics[width=.75\linewidth]{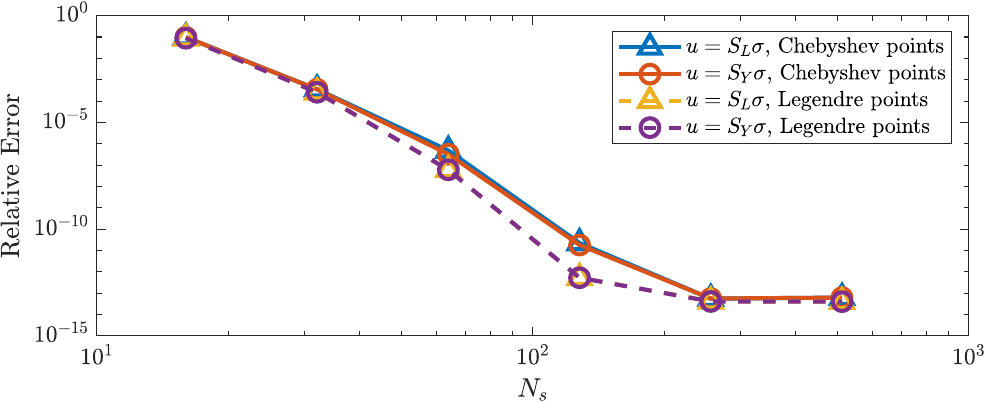}
    \caption{Convergence results for our smooth ODE solver test. The
      number of discretization points, $N_s$, is compared to the
      relative~$L^2$ difference between the computed solution and the
      solution obtained via \chebfun. The difference plateaus
      at~$10^{-12}$ as that is the accuracy reported by the \chebfun~ solver.}
    \label{fig:ODE_refinement}
\end{figure}

Figure~\ref{fig:ODE_refinement} depicts the convergence of the solution
in this
regime. We see that our method indeed converges and gives us an
accurate solution to the problem with a moderate number of
discretization points. We can also see that both choices of kernels
result in comparable accuracy, and neither was consistently better. As
such, we chose to use the Poisson kernel~$\GL$ for the remainder of
this paper. Furthermore, Figure~\ref{fig:ODE_refinement} also makes it
clear that using Gauss-Legendre points results in a more accurate
solution than is achieved with the same number of Chebyshev points,
but only slightly so and both solutions converge to near machine
precision.

\subsection{Laplace-Beltrami on a smooth surface}

In order to validate our Laplace-Beltrami solver, we constructed a surface
$\Gamma$ and right-hand side $f$ with a known solution. We did this by
constructing a smooth surface and choosing the exact solution to be a
constant plus the restriction of a smooth function $v$ defined in all
of $\bbR^3$ (following the approach in~\cite{ONeil2018}).
We then generated $f$ through the following well-known
analytic formula which applies in any neighborhood that a surface is
smooth:
\begin{equation}
  \label{eq:LB_rest}
  f=\LB \left(v|_\Gamma\right) = \Delta v-2H \frac{\partial
    v}{\partial \bs n}-\frac{\partial^2 v}{\partial \bs
    n^2} .
\end{equation}
In the above formula, $H$~is the mean curvature of~$\Gamma$ and $\bs
n$ is the normal to~$\Gamma$. A derivation of this formula may be
found in \cite{nedelec2001}. We evaluate $f$ by analytically computing
the terms in~\eqref{eq:LB_rest} based on a global parameterization of
the surface.
In this example,~$\Gamma$ is given by
a circular torus with inner radius one and outer radius two,
and we set $v$ to be the Newtonian potential centered at {$\bs x_0=(0,0.5,0.5)$}:
\begin{equation*}
    v(\bs x) = -\frac{1}{|\bs x - \bs x_0|}.
\end{equation*}
 The exact solution to this problem is given by
 $u=v|_\Gamma-\frac{1}{|\Gamma|}\int_\Gamma v$, where~$|\Gamma|$ is
 the surface area of~$\Gamma$. This solution and the corresponding
 right hand side are shown in~Figure~\ref{fig:surface_plots}. We
 conducted a convergence test by varying~$N_\theta$ and~$N_s$. We can
 see from the relative errors in~Figure~\ref{fig:Smooth_LB_test} that our
 method is capable of accurately solving this problem, and therefore
 the solver is working as expected.

\afterpage{%
  \vspace{5\baselineskip}
  \begin{figure}[h]
   \centering
   \begin{subfigure}{0.45\linewidth}
     \centering{\includegraphics[width=.95\linewidth]{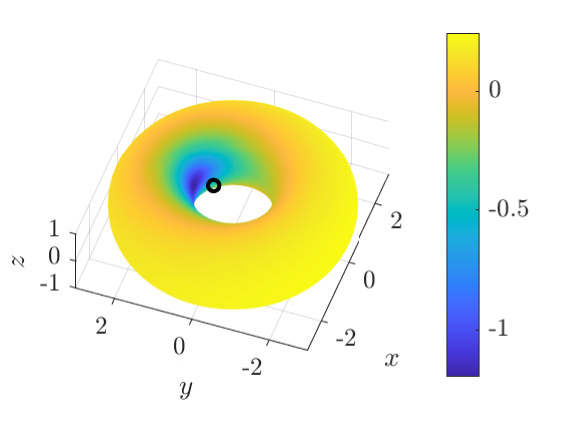}}
     \caption{The solution of $\LB u=f$. }
   \end{subfigure}
   \quad
   \begin{subfigure}{0.45\linewidth}
     \centering{\includegraphics[width=.95\linewidth]{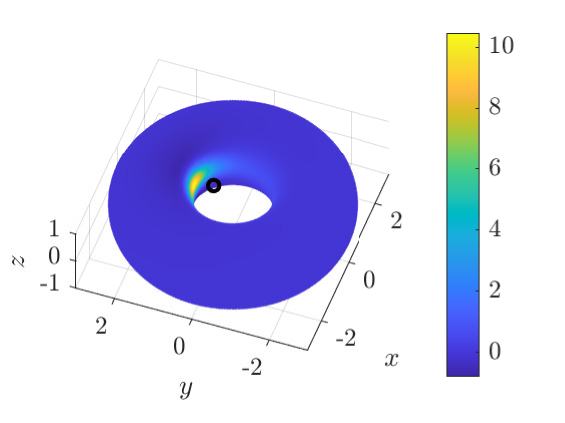}}
     \caption{The right hand side of $\LB u=f$.}
   \end{subfigure}
   \caption{The solution and right hand side of $\LB u=f$, where $u$
     was chosen to be the mean-zero restriction of the Newtonian
     potential to~$\Gamma$. The Newtonian potential is centered at the
     black circledot.}
   \label{fig:surface_plots}
 \end{figure}
  \vspace{5\baselineskip}
\begin{figure}[h]
    \centering
    \includegraphics[width=0.75\linewidth]{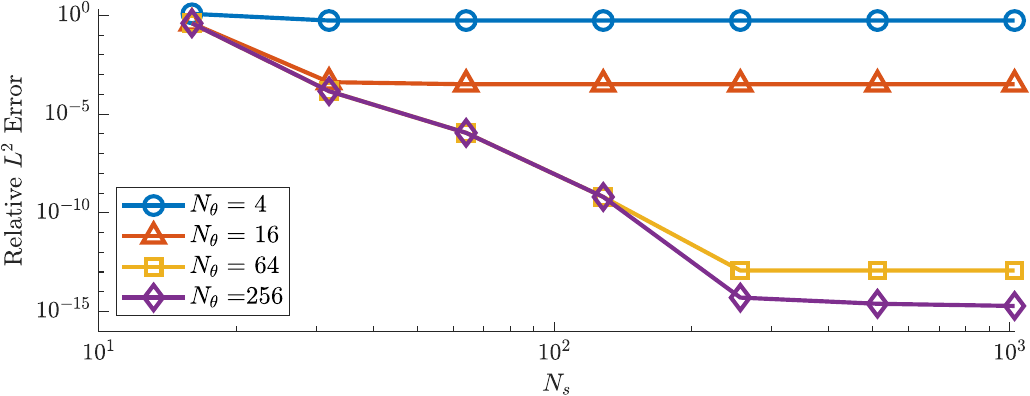}
    \caption{The errors of the computed solution in our circular torus
      tests. $N_s$ is the number of points around the generating curve
      and $N_\theta$ is the number of points used in the
      $\theta$-direction. \label{fig:Smooth_LB_test}}
\end{figure}
\clearpage
}

\subsection{Singular surface test}
\label{sec:singular}

In this experiment, we tested our Laplace-Beltrami solver on a
non-smooth surface with right-hand sides that are irregular at
surface edges. We set the surface~$\Gamma$ to be a square toroid: the
surface resulting from revolving a unit square about the
$z$-axis. Some care must be taken when discretizing this surface
because of the geometric singularities. It is necessary to ensure that panel
boundaries in our discretization of the generating curve coincided
with the surface edges, as the coefficient functions in
\eqref{eq:F_LB_ODEs} are singular at the edges of $\Gamma$. The
coarsest possible discretization thus had one panel per face. In order
to resolve the singularity in the right-hand side, we dyadically
refined this coarse discretization into the edge where $f$ is
singular. We denote the width of the finest panel as $h_{\text{final}}$ and
study how the errors depend on it. In the $\theta$-direction, our
surface and right hand sides were smooth, so we simply set
$N_\theta=10$.

In this example, we are not able to generate an exact solution and
right-hand side through the same method as we used in the smooth
surface case, as the restriction of a smooth function to our surface
would not have been in~$C^1(\Gamma)$. Instead, we generate them using \chebfun. Specifically, we specify that the true
solution satisfies
\begin{equation}
  \label{eq:unos}
  \partial_ s^2u(\theta,s)=\Theta(\theta)S(s),
\end{equation}
where~$\Theta$ and~$S$ are specified functions. This allows us to use \chebfun's anti-differentiation routine to compute~$u,\partial_s u$, and $f =\LB u$ in $\theta$-Fourier
space. We note that this is
different than our methodology in Section~\ref{sec:Comp_fcts}, where
we instead specified the form of $f$ and used \chebfun~to solve the ODE
for~$u$. We choose to use anti-differentiation here as it becomes
trivial to ensure that $u$ is mean-zero on~$\Gamma$. Furthermore, this
scheme does not require us to
solve a singular ODE in order to compute the reference solution.

Explicitly, we find each Fourier mode~$n$ of~$u$ and~$f$ by performing the
following calculations:
\begin{equation}
  \partial_s u_n = \int \partial_s^2 u_n
  -\frac{1}{4}\int_0^{4} \lp \int \partial_s^2 u_n \rp,
\end{equation}
and then for~$n\neq 0$
\begin{equation}
u_n=   \int \partial_s u_n -\frac{1}{4}\int_0^{4}\left(\int \partial_s
u_n\right),
\end{equation}
and for $n=0$
\begin{equation}
  u_0 = \int \partial_s u_0 - \frac{1}{4}\int_0^4 w\left(\int \partial_s u_0\right).
\end{equation}
The right hand side is then computed (for all $n$) as
\begin{equation}
  f_n = \partial_s^2 u_n + \frac{\partial_s r}{r} \partial_su_n +
  \frac{n^2}{r^2} u_n.
\end{equation}
Above, we use $\int$ to denote anti-derivative.

We tested the solver with several choices of~$S$
in~\eqref{eq:unos}. As a smooth test we set $S(s) = \cos(\pi s/2)$. We
also tested several singular cases of the form $S(s)=|s-2|^\alpha$,
where we set $\alpha=-\frac13,-\frac12,$ and $-\frac34$. Here~$s=2$ is the
arclength parameter corresponding to the top inner edge, so $u$~is
non-smooth at that edge (and the one directly across from it). For all
of our tests, we set~$\Theta(\theta)=\sin(3\theta)$. 

Cross sections of the resulting solutions are displayed in Figure~\ref{fig:mult_alps_soln}. As expected, the solutions are continuously differentiable, with local extrema at~$s=2$. Further, as~$\alpha$ approaches -1 and the minimum~$r$ such that~$f_n\in L^r(\Iper)$ shrinks, the point of the extrema becomes sharper and the solution approaches the boundary of~$C^1(\Iper)$. This experiment is thus a good stress-test of our solver and will demonstrate how the solver behaves on problems that barely satisfy its basic requirements.

Figure~\ref{fig:non_smooth_surface_err} shows how the error in each test
problem depended on the extent of the dyadic refinement. We see that
the smooth case was resolved with a single panel on each face. From
this, we can see that discontinuous coefficient functions did not
prevent our method from computing accurate solutions with relatively
few unknowns. In the singular tests, we can see that sufficient
refinement was all that was necessary to achieve an accurate
solution. Thus, the limited resolution of the singularity in the
finest panel was the source of the dominant error in the
solution. Furthermore,~Figure~\ref{fig:non_smooth_surface_alpha}
demonstrates that the error in fact decayed
as~$\mathcal O(h_{\text{final}}^{1+\alpha})$, which was the order
of accuracy in our
evaluation of the left hand side of the integral equation. This fact
is further discussed in Appendix~\ref{sec:error_bound}.

\begin{figure}[t]
    \centering
    \includegraphics[width=.7\linewidth]{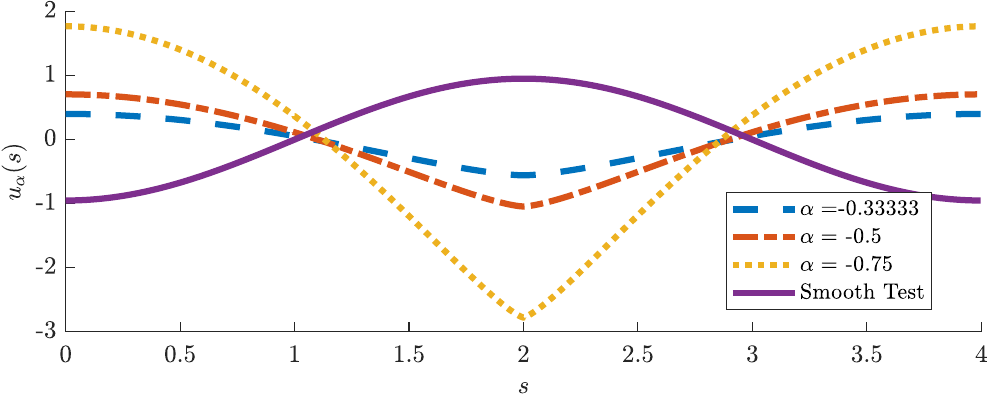}
    \caption{This figure shows the four solutions that were used to
      test our solver in the singular geometry of a square
      toroid. Each solution is plotted around the cross section
      $\theta = \frac{2\pi}5$.}
    \label{fig:mult_alps_soln}
\end{figure}

\afterpage{%
\begin{figure}[h]
  \centering
  \begin{subfigure}{0.45\linewidth}
    \centering{\includegraphics[width=.95\linewidth]{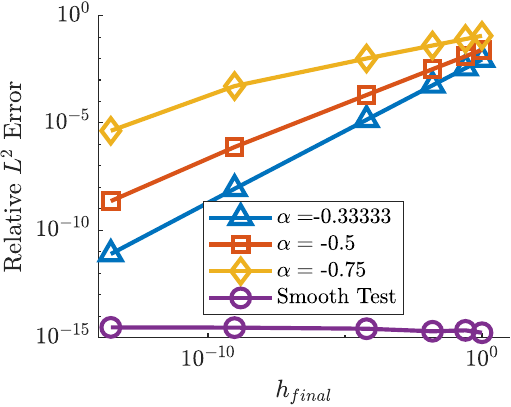}}
    \caption{The relative~$L^2$ errors of the square toroid tests.
      \label{fig:non_smooth_surface_err}}
  \end{subfigure}
  \quad 
  \begin{subfigure}{0.45\linewidth}
    \centering{\includegraphics[width=.95\linewidth]{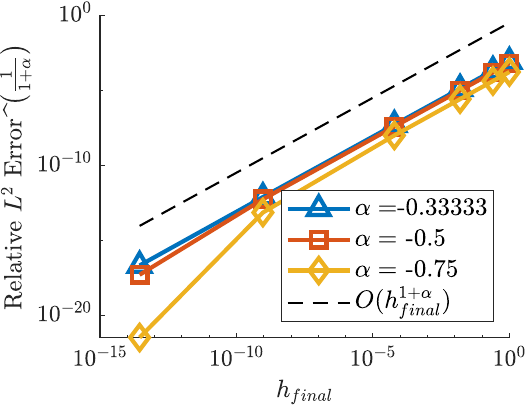}}
    \caption{The relative $L^2$ errors of the square toroid tests
      raised to the power~$\frac{1}{1+\alpha}$. The dashed line
      indicates the scaling
      $\mathcal O(h_{\text{final}}^{1+\alpha})$.
      \label{fig:non_smooth_surface_alpha}}
  \end{subfigure}
  \caption{The figures show how the relative error of the square
    toroid tests depended on the extent of the refinement. Each marker
    indicates a trial.}
\end{figure}
\vspace{5\baselineskip}
\begin{figure}[h]
  \centering
  \begin{subfigure}{0.45\linewidth}
    \centering{\includegraphics[width=.95\linewidth]{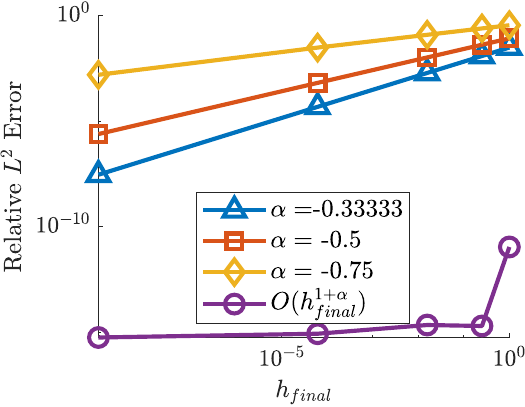}}
    \caption{The relative $L^2$ differences in the self-convergence
      study on the square
      toroid. \label{fig:non_smooth_surface_err_self}}
  \end{subfigure}
  \quad 
  \begin{subfigure}{0.45\linewidth}
    \centering{\includegraphics[width=.95\linewidth]{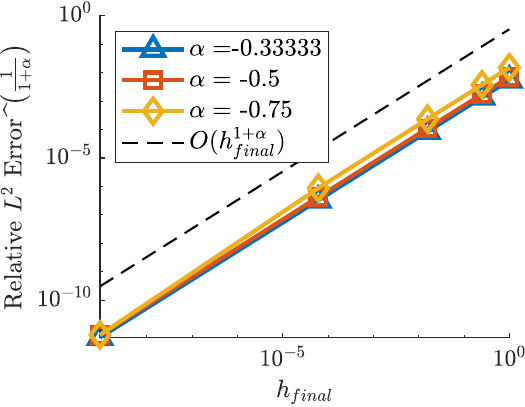}}
    \caption{The relative $L^2$ difference raised to the power
      $\frac{1}{1+\alpha}$. The dashed line indicates the scaling
      $\mathcal O(h_{\text{final}}^{1+\alpha})$.\label{fig:non_smooth_surface_self_alpha}}
  \end{subfigure}
  \caption{These figures show how the relative difference between the
    most refined solution and the other solutions depends on their
    refinement in the self-convergence study.}
\end{figure}
\clearpage
}

As a further verification, we performed a self convergence study that
did not make use of \chebfun. To do this, we directly set
$f(\theta,s)=\Theta(\theta)S(s)$, where $\Theta$ and $S$ were chosen
to be the same as in the previous
experiment. Figure~\ref{fig:non_smooth_surface_err_self} shows the
relative difference between the solution on the finest grid and the
solutions on other grids, for each of the choices of $S$. Most of the
observations from the previous experiment apply here, except that in
the case where $S(s)=\cos(\pi s/2)$. In this experiment, the corresponding solution was
not smooth, so two panels on each face were necessary in order to
accurately resolve it.

 \subsection{Harmonic vector field computation}

 The surfaces of revolution that we have considered so far are
 non-trivial and of genus one; it is well-known that they support
 a two-dimensional linear space of harmonic vector fields, i.e.
 square-integrable
 tangential vector fields~$\bs H\in \bs L^2_t(\Gamma)$ such that
 $\divg \bs H= 0$ and $\divg (\bs n \times \bs H)= 0$ in $\Hm$. For
 all surfaces of revolution, an orthogonal basis for these harmonic
 vector fields is analytically given by
 \begin{equation}
   \label{eq:basis}
   \bs H_1 = \frac{1}{r} \shat, \qquad \text{and} \qquad
   \bs H_2=\frac{-1}r \thetahat,
 \end{equation}
 where~$\shat$ and~$\thetahat$ denote unit vectors along the generating curve
 and in the azimuthal direction, respectively~\cite{Epstein2013,Epstein2019}.
 This fact may be easily verified by direct calculation. As a further test of our
 Laplace-Beltrami solver, we shall use it to construct a basis for the space of harmonic
 vector fields along the same surface~$\Gamma$ as in the previous experiment, a square toroid. The basis fields~$\bs H_1$ and~$\bs H_2$ for this surface are shown in~Figure~\ref{fig:Harmonic_fields}.

In order to compute a basis for the harmonic vector fields, we shall make use
of the Hodge decomposition of a general vector field along~$\Gamma$, which was
introduced in section~\ref{sec:intro}. This decomposition splits a tangential
vector field $\bs F$ into a curl-free component $\bs F_{\text{cf}}$ (i.e. one
where $\divg (\bs n \times \bs F_{\text{cf}} )=0$), a divergence-free component
$\bs F_{\text{df}}$, and a harmonic component $\bs H$. The Hodge decomposition
of a vector field $\bs F\in \bs{H}^1_t(\Gamma)$ can be written explicitly as
\begin{equation}\label{eq:Hodge}
  \bs F = \surfgrad \alpha + \bs n \times \surfgrad \beta +
  \bs H ,
\end{equation}
where~$\alpha$ and~$\beta$ are mean-zero scalar functions defined
on~$\Gamma$. See~\cite{Imbert-Gerard2017}, for example, for a more detailed
discussion of this representation in similar genus one geometries.
Taking the surface divergence of~\eqref{eq:Hodge}, as well as the
surface curl~\eqref{eq:Hodge}, shows
that~$\alpha$ and~$\beta$ must satisfy
\begin{equation}
  \LB  \alpha = \divg \bs F, \qquad \text{and} \qquad
  \LB \beta  = -\divg(\bs n\times \bs F).
 \end{equation}
 We note that because $\bs F\in\bs{H}^1_t$, the right hand sides $\divg \bs F$ and $\divg(\bs n\times\bs F)$ will be in $L^2(\Gamma)$ and mean-zero. The Laplace-Beltrami problems are thus well posed and there exist unique mean-zero $\alpha$ and $\beta$ satisfying the interface form with the smoothness described in Section~\ref{sec:interface_form_hols}.

With the above Hodge decomposition in mind, it becomes clear how to
compute examples of Harmonic vector fields~$\bs H$: we can simply
choose a tangential vector field~$\bs F$ and subtract off the
components $\surfgrad \alpha$ and $\bs n \times \surfgrad \beta$. In
order to make this numerically feasible, we shall restrict our choice
of $\bs F$ to be smooth on each face of $\Gamma$ and continuous across each edge of $\Gamma$. We may then compute
$\divg \bs F$ and $\divg(\bs n\times \bs F)$ pseudo-spectrally by
first Fourier decomposing~$\bs F$ as:
\begin{equation}
  \begin{aligned}
    \bs F &= F^s \, \shat + F^\theta \, \thetahat \\
    &= \sum_n \lp F^s_n \, \shat + F^\theta_n \, \thetahat \rp,
  \end{aligned}
\end{equation}
and then by applying the following formula mode-by-mode:
\begin{equation}
  \label{eq:Four-div}
  \divg \bs F = \frac{ d }{d s} F^s +\frac{1}{r}\frac{d r}{d s}F_s
  + \frac{1}{r} \frac{d}{d\theta} F^\theta.
\end{equation}
In order to compute~$dF_n^s/ds$, we interpolate~$F^s_n$ onto Chebyshev
panels in arclength along the generating curve and use Chebyshev
differentiation. Having computed the divergences (i.e. the right hand
side to a Laplace-Beltrami problem), we use our method to solve the
Laplace-Beltrami equation for~$\alpha$ and~$\beta$. Next, we compute
the surface gradient of~$\alpha$ and~$\beta$ (again, mode-by-mode) through
the formula
\begin{equation}
  \surfgrad u =  \frac{ d u}{d s} \shat + \frac{1}{r} \frac{d
    u}{d\theta}\thetahat.
\end{equation}
Note that differentiation with respect to~$\theta$ is merely
multiplication by~$(in)$ in Fourier-space. Lastly, to
compute~$d \alpha/d s$ and~$d\beta/ds$ we note that we already know~$\alpha$ and $\beta$ as the
solution of a system of integral equations with the representation,
for example,~$\alpha_n = \SL \sigma_n+C$. We can therefore use the
formula~${d \alpha_n}/{ds}=\SL'\sigma_n$ to easily compute this
quantity via quadrature on the integral
representation. Once all terms are computed for each mode, we can
synthesize the Fourier series and evaluate the harmonic component as
\begin{equation}
  \bs H = \bs F - \surfgrad \alpha - \bs n \times \surfgrad
  \beta.
\end{equation}
As a measure of accuracy, we then project~$\bs H$ onto the
basis~$\bs H_1$ and $\bs H_2$ in~\eqref{eq:basis} and look at the
$L^2(\Gamma)$ norm of the remainder relative to the norm of $\bs F$ to
determine if the computed field lies in the space of harmonic vector
fields.

As a test field, we computed the harmonic component of $\bs F = r
\shat + r^{-2} \thetahat$. With two panels per face, the relative
$L^2$ norm of the remainder was less than $10^{-14}$. We also
validated our code by computing the harmonic components of the exact
basis~$\{\bs H_1,\bs H_2\}$. We found that the basis is within machine
precision of being harmonic.

\begin{figure}[t]
  \centering
\begin{subfigure}{0.45\linewidth}\centering{\includegraphics[width=\linewidth]{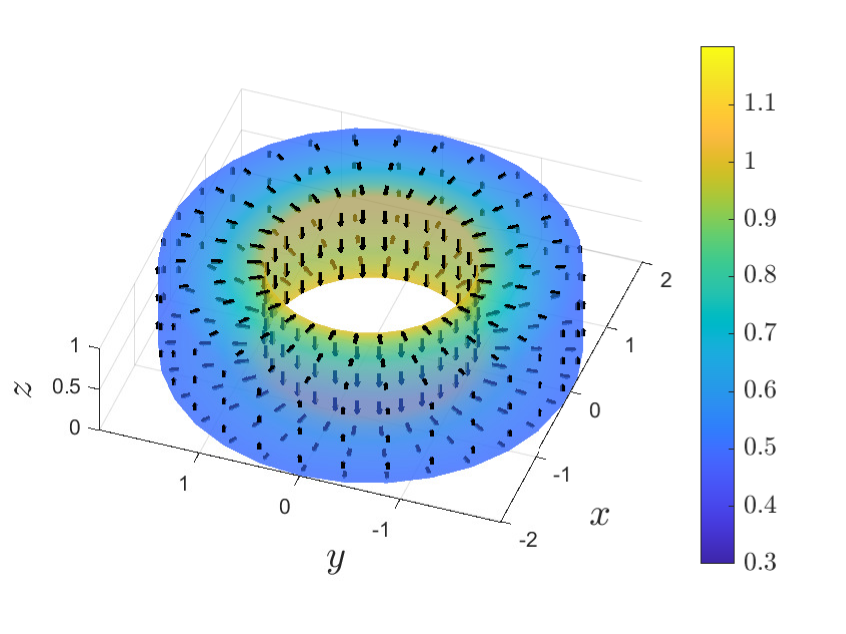}}
\end{subfigure}\quad 
\begin{subfigure}{0.45\linewidth}\centering{\includegraphics[width=\linewidth]{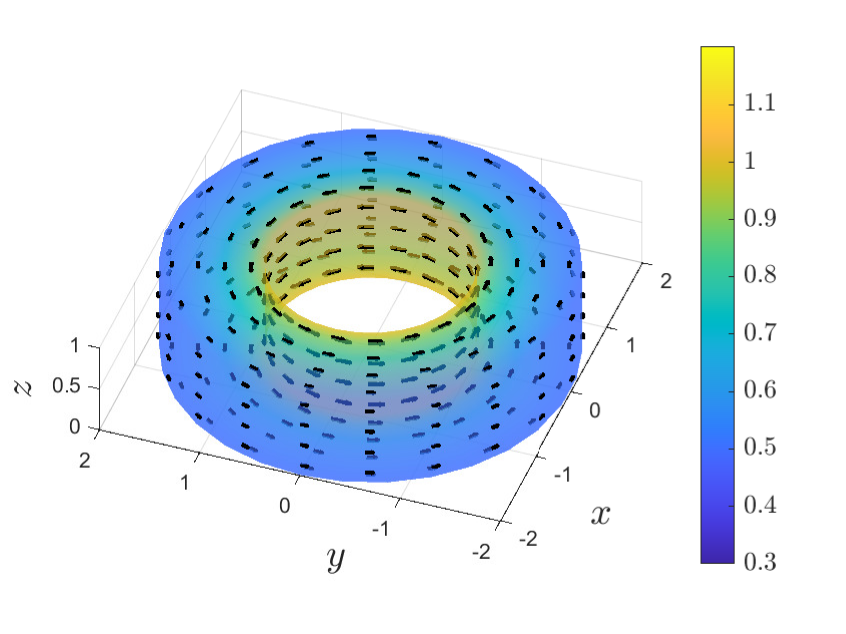}}
\end{subfigure}
\caption{The two basis harmonic vector fields for a square toroid $\bs H_1 = \frac{1}{r} \shat$ (left) and $\bs H_2=\frac{-1}r \thetahat$ (right). Color is used to indicates the magnitude of the field. }\label{fig:Harmonic_fields}\end{figure}

\section{Conclusions}
\label{sec:conclusions}

In this work, we reformulated the Laplace-Beltrami problem on
piecewise smooth surfaces as a collection of smooth problems on each
face combined with interface conditions at surface edges. To
summarize, if the right hand side of a Laplace-Beltrami problem is in
$L^2(\Gamma)$, then the solution in the usual weak sense will be well-behaved at the surface edges. Furthermore, we analytically computed an expansion of
such solutions in corners having conic angle of~$\gamma$; the leading
order term is of the form~$r^{\frac{2\pi}\gamma}$, where~$r$ is the
distance to the corner along the surface.

Furthermore, we used this reformulation to develop a numerical method
that solves the Laplace-Beltrami problem on piecewise smooth surfaces
of revolution. The numerical results support the theoretical results
of the paper. This method converted each Fourier mode of the
Laplace-Beltrami problem into a second-kind integral equation that automatically satisfied the interface conditions and could be accurately solved using standard numerical techniques for
integral equations. The integral equation formulation for solving the
associated one-dimensional periodic ODEs can be easily generalized to
any second-order periodic ODE with coefficients and right hand side
in~$L^{r}(\Iper)$ for some~$r>1$, even if they are non-smooth.

This Laplace-Beltrami solver, and the experiments used to verify it,
demonstrated the ability to easily obtain high-order accuracy for the
problem on piecewise smooth surfaces. However, this specific solver is
limited to surfaces of revolution that are separated from their axis
of rotation. In future work, we plan to develop a new integral
equation based solver that can be applied to a more general classes of
surfaces, for example piecewise smooth surfaces specified by a
collection of charts with no symmetry assumptions at all. This will be based on a parametrix
method~\cite{hormander3}, similar to the approach
in~\cite{kropinski2014fast}. Work in these directions is ongoing.

\section*{Acknowledgments}

The authors would like to thank Charlie Epstein for several useful
discussions.

\appendix

\section{Error bounds for singular data}
\label{sec:error_bound}

Here we discuss the error in our method for computing the left hand
sides of~\eqref{eq:new-int-eq} in the singular surface test. In this
test, we chose the solution to satisfy~$u''(s)=\Theta(\theta)
|s-2|^\alpha$. We will see that the error
is~$\mathcal O(h_{\text{final}}^{1+\alpha})$, where $h_{\text{final}}$ denotes
the width of the most refined panel. This will imply that the error
in~$\sigma$ has the same order.

In order to evaluate the left hand side of~\eqref{eq:new-int-eq}, we
must evaluate integrals of the form $\int_0^4 K\, \sigma_n ds$ for
various kernels~$K$. This integral is challenging to compute because
$\sigma_n=u_n''$, and therefore~$\sigma_n(s) = C|s-2|^\alpha$. We shall
assume for this discussion that $K$ is smooth and bounded on each
face. In reality, the kernel will only be piecewise smooth and we
will the use panel splitting idea in Section~\ref{sec:solver}
to accurately compute the integral. However, for the sake of clarity,
we omit these details in this discussion.

To study the error, we consider the integral over an example face:
$s\in (2,4)$. We split the integral into two pieces, one over the
finest panel $(2,2+h_{\text{final}})$ where $\sigma_n$ is singular,
and one over the rest of the panels where~$\sigma_n$ is smooth. On the
finest panel, $K$~may be approximated as having its value at $s=2^+$
since $h_{\text{final}}$ is sufficiently small. The integral thus
becomes
\begin{equation}
    \int_{2}^{2+h_{\text{final}}} K \, \sigma_n \, ds \approx
    h_{\text{final}} \int_0^{1} K(2^+) \, C \, |h_{\text{final}}
    \tilde s|^\alpha \,
    d\tilde s=C K(2^+) \frac{h_{\text{final}}^{\alpha+1}}{1+\alpha}.
\end{equation}
 If we apply our quadrature rule to the integral we obtain
 \begin{equation}
   h_{\text{final}}  \sum_{i=1}^{16} w_i C K(2^+)(h_{\text{final}}
   x_i)^\alpha  =  CK(2^+) h_{\text{final}}^{\alpha+1} \sum_{i=1}^{16}
   w_i x_i^\alpha,
\end{equation}
where the $x_i$'s and $w_i$'s are the standard 16th-order
Gauss-Legendre quadrature points and weights. The error on this panel
thus approaches
\begin{equation}
  |CK(2^+)| h_{\text{final}}^{\alpha+1}  \left|\frac{1}{1+\alpha} -
  \sum_{i=1}^{16} w_i x_i^\alpha   \right|.
\end{equation}
Since the function $x^{\alpha}$ is not integrated exactly by
Gauss-Legendre quadrature, the error on the finest panel will be
$\mathcal O(h_{\text{final}}^{1+\alpha})$.

On the panels where the function is smooth, we use the standard formula
for the error resulting from applying $k$th-order
Gauss-Legendre quadrature to integrate a function~$f$ on the
interval~$(a,b)$, see \S 5.2 of~\cite{Kahaner1989}:
\begin{equation}
  \left\vert \int_a^b f(x) \, dx - \sum_{j=1}^k w_j \, f(x_j)
  \right\vert = 
    \frac{(b-a)^{2k+1}(k!)^4}{(2 k+1)(2k)!^3}f^{(2k)}(\xi), \quad
    \text{for some } \xi\in(a,b).
\end{equation}
If we let $h_0,\ldots,h_N=h_{\text{final}}$ be the panel widths in our
dyadic refinement, then by definition $h_i= h_0 2^{-i}$. Since $K$ is
smooth on the interval $(2,4)$, the dominant term in
$(K\sigma_n)^{(2n)}$ will be $CK(s)|s-2|^{\alpha-2k}$. On the panel of
width $h_i$, this term may be bounded by $|C|\max |K|
h_i^{\alpha-2k}$, since that panel is a distance $h_i$ away from the
singularity. The error on that panel is thus bounded by
\begin{equation}
   |C|\max |K| \frac{h_i^{2k+1}(k!)^4}{(2 k+1)(2k)!^3} h_i^{\alpha-2k}.
\end{equation}
 Summing our error over all of the smooth panels gives the bound
\begin{equation}
    |C|\max |K|\frac{h_0^{1+\alpha}(k!)^4}{(2 k+1)(2k)!^3}\sum_{i=0}^N
    2^{(1+\alpha)(-i)}\leq |C|\max |K|
    \frac{h_{0}^{1+\alpha}(k!)^4}{(2 k+1)(2k)!^3}
    \frac{1}{1-2^{1+\alpha}}.
\end{equation}
Since we do not apply our scheme to the case where $\alpha$ is
exponentially close to -1, and we are using $k=16$, this error will be
well below machine precision. The error from the final panel thus
dominates the error in the smooth panels, and therefore the error in
computing the left hand side of~\eqref{eq:new-int-eq} will be
$\mathcal O(h_{\text{final}}^{1+\alpha})$.

\newpage
\bibliography{main.bib}

\end{document}